%

\documentclass{ws-idaqp}
\usepackage{graphicx}
\usepackage[super]{cite}
\renewcommand{\vec}[2]{{\rm vec}_{#1}(#2)}
\begin{document}

\markboth{Frederik vom Ende}{From the Choi Formalism in Infinite Dimensions to Unique Decompositions of Generators of Completely Positive Dynamical Semigroups}

\catchline{}{}{}{}{}

\title{FROM THE CHOI FORMALISM IN INFINITE DIMENSIONS TO UNIQUE DECOMPOSITIONS OF GENERATORS OF COMPLETELY POSITIVE DYNAMICAL SEMIGROUPS}

\author{FREDERIK VOM ENDE
}

\address{Dahlem Center for Complex Quantum
Systems, Freie Universität Berlin, Arnimallee 14\\
Berlin, 14195,
Germany\\
frederik.vom.ende@fu-berlin.de}

%

\maketitle

\begin{history}
\received{(Day Month Year)}
\revised{(Day Month Year)}
\published{(Day Month Year)}
\comby{(xxxxxxxxxx)}
\end{history}

\begin{abstract}
Given any separable complex Hilbert space, any trace-class operator $B$ which does not have purely imaginary trace, and any generator $L$ of a norm-continuous one-parameter semigroup of completely positive maps we prove that there exists a unique bounded operator $K$ and a unique completely positive map $\Phi$ such that (i) $L=K(\cdot)+(\cdot)K^*+\Phi$, (ii) the superoperator $\Phi(B^*(\cdot)B)$ is trace class and has vanishing trace, and (iii) ${\rm tr}(B^*K)$ is a real number. Central to our proof is a modified version of the Choi formalism which relates completely positive maps to positive semi-definite operators. We characterize when this correspondence is injective and surjective, respectively, which in turn explains why the proof idea of our main result cannot extend to non-separable Hilbert spaces. In particular, we find examples of positive semi-definite operators which have empty pre-image under the Choi formalism as soon as the underlying Hilbert space is infinite-dimensional.
\end{abstract}

\keywords{Completely positive linear maps; Quantum channel; Completely positive dynamical semigroup; Choi matrix.}

\ccode{AMS Subject Classification: 37N20, 
46N50, 
47B10, 
81P48 
}

\section{Introduction}
\label{sec_intro}

Completely positive maps sit at the heart of quantum information theory and irreversible quantum dynamics, the latter of which captures fundamental physical processes such as decoherence or measurements.
In particular, the evolution of many open quantum systems---that is, quantum systems which are not shielded from their environment---can be described by a norm-continuous one-parameter semigroup $(\Psi_t)_{t\geq0}$ of completely positive and trace-preserving
maps on a complex Hilbert space $\mathcal H$; this is also known as \textit{quantum-dynamical semigroup} \cite[Sec.~3.2]{BreuPetr02}.
The semigroup property together with continuity in the parameter $t$ guarantees that the whole evolution is fully captured by the generator $L$ of the semigroup, that is, the (unique) bounded operator $L$ such that $\Psi_t=e^{tL}$ for all $t\geq 0$, refer to \cite{EngelNagel00} for more detail.
What is more, there is even a standard form for generators of quantum-dynamical semigroups as first established in the seminal papers of Gorini et al.\cite{GKS76} as well as Lindblad \cite{Lindblad76}{}:
Every such $L$ can be written as\footnote{
Here, $[A,B]=AB-BA$ and $\{A,B\}=AB+BA$ are the usual commutator and anti-commutator, respectively.
Moreover, $\Phi^*$ is the dual of $\Phi$ which we will recap properly at the end of Section~\ref{sec_prelim}. All that is important for now is that the specific choice of operator in the anti-commutator guarantees that the generated semigroup is trace-preserving.
}
$-i[H,\cdot]+\Phi-\frac12\{\Phi^*({\bf1}),\cdot\}$ for some bounded, self-adjoint operator $H$ and some completely positive map $\Phi$; this is commonly known as \textit{GKSL-form}.
As trace-preservation 
amounts to the simple linear constraint ${\rm tr}(L(\rho))=0$ for all $\rho$,
as a slight generalization one finds that some $L$ generates a norm-continuous completely positive semigroup if and only if
$L=K(\cdot)+(\cdot)K^*+\Phi$ for some $K\in\mathcal B(\mathcal H)$ and some $\Phi$ completely positive \cite[Thm.~3.1]{CE79}{}.
From a physics perspective the term $-i[H,\cdot]$ in a GKSL-generator represents the intrinsic evolution of the system (according to the Liouville-von Neumann equation) whereas $\Phi-\frac12\{\Phi^*({\bf1}),\cdot\}$ models the interaction of the system with its surroundings.
Therefore, given some generator $L$, for applications and interpretation purposes it is desirable to know which part of the motion is due to the system itself and which part comes from the environment.
The more precise question here would be whether there exist some ``reasonable'' domain and co-domain\footnote{
Note that such domain considerations are inevitable as for general $K,\Phi$ one can always shift the Kraus operators (more on those in Sec.~\ref{sec_prelim_2}) of $\Phi$ by a multiple of the identity which leads to a modification of $H$ while leaving the overall generator $L$ invariant \cite[Eq.~1.4]{Davies80unique}.
}
such that the map $L\mapsto (H,\Phi)$ is well-defined.

At this point it should be mentioned that deriving quantum-dynamical semigroups physically, i.e.~obtaining the generator from microscopic considerations  \cite[Sec.~3.3]{BreuPetr02} usually comes with a ``natural'' GKSL-form already.
Hence the question of unique decompositions of GKSL-generators may---at least from a physics point-of-view---seem a bit artificial at first.
However, considering different decompositions of such generators is most useful, e.g., in equilibrium physics to obtain desirable properties such as quantum detailed balance \cite[Sec.~1.3.4]{AlickiLendi07} (we will make another comment about this connection later).
Moreover, different decompositions of $L$ also come up for dynamical semigroups with additional properties such as, e.g., covariance
\cite{Holevo93,SU23}{}.

Regardless of the physical motivation, the question of unique decompositions of $L$ is as old as the the GKSL-form itself:
In their original work Gorini et al.\cite{GKS76} have established that, for $\mathcal H$ finite-dimensional, such a unique decomposition is possible if both $H$ and $\Phi$ are traceless.
It has been shown that choosing $\Phi$ traceless corresponds to minimizing the dissipative term with respect to some ``average norm'' \cite{HS22}{}, which in turn has proven useful in studying open quantum systems from the perspective of quantum thermodynamics \cite{SS24}{}.
While this trace-zero condition has no meaningful counterpart beyond finite dimensions,
partial results in this direction have been achieved nonetheless:
Uniqueness in infinite dimensions has been established for special classes of generators, cf.~\cite{AF83} and for general $L$ under (rather restrictive) compactness conditions on the operator $K$, cf.~\cite{Davies80unique,FM92}{}.
Moreover,
Parthasarathy has derived a uniqueness result on the level of the Kraus operators $V_j$ (i.e.~the ``building blocks'') of $\Phi$ wherein he characterizes when different choices of $H$ and $V_j$ lead to the same generator $L$ \cite[Thm.~30.16]{Parthasarathy92}.
Note that Parthasarathy's result has been used to settle open questions regarding detailed balance of quantum Markov semigroups and their adjoint \cite{Fagnola07}{}.

Either way this paper's main result will improve upon---or at the very least simplify---the result of Parthasarathy, the central point being that we turn equivalence classes of $(H,\Phi)$ into a unique decomposition by (i) fixing the Hamiltonian $H$ via a trace condition 
and, more importantly, by (ii) eliminating the ambiguity of the Kraus operators by replacing Parthasarathy's trace condition (``${\rm tr}(B^*V_j)=0$'') with a trace condition on the level of the completely positive map $\Phi$.
Indeed, our main result reads as follows:
\medskip\smallskip

\noindent\textbf{Theorem~\ref{thm_main} (Informal).}
{\it
Let $L$ be the generator of a completely positive dynamical semigroup on a separable Hilbert space. Then for all trace-class operators $B$ with ${\rm Re}({\rm tr}(B))\neq 0$
there exists a unique bounded operator $K$ and a unique completely positive map $\Phi$ such that
\begin{itemize}
\item[(i)] the map $\Phi(B^*(\cdot)B)$ has trace zero (equivalently: for every set of Kraus operators $\{V_j\}_j$ of $\Phi$, it holds that ${\rm tr}(B^*V_j)=0$ for all $j$),
\item[(ii)] ${\rm Im}({\rm tr}(B^*K))=0$, and
\item[(iii)] $L=K(\cdot)+(\cdot)K^*+\Phi$.
\end{itemize}
}\smallskip

\noindent For generators of GKSL-form and self-adjoint $B$ condition (ii) reduces to the vanishing expectation value ${\rm tr}(BH)=0$.
%
Either way 
this extends a finite-dimensional result of ours
\cite{vE24_decomp_findim}{},
a key tool of which was the Choi matrix, that is, the matrix $\mathsf C(\Phi):=({\rm id}\otimes\Phi)(|\Gamma\rangle\langle\Gamma|)$ where $\Phi$ is an arbitrary linear map,
$\Gamma:=\sum_{j=1}^n |j\rangle\otimes|j\rangle\in\mathbb C^n\otimes\mathbb C^n$ is the unnormalized entangled state, and $|j\rangle$ is the $j$-th standard basis vector.
This formalism is used to relate completely positive maps to positive semi-definite operators on an enlarged Hilbert space (cf.~Section~\ref{prelim_choi} for more details).
Thus, trying to prove the above main result inevitably raises a number of questions and issues due to the infinite-dimensional setting:
1.~%
It is not immediately clear when (resp.~under which conditions) the trace of $\Phi(B^*(\cdot)B)$ even exists.
2.~What approaches to the Choi matrix in infinite dimensions exist, and which of them (if any) are suited to help prove our main result?
%
We will address these problems in this work
and in the process we will obtain new results on the infinite-dimensional Choi formalism as well as on when linear maps between Schatten-class operators are themselves part of a Schatten class.

This work is structured as follows.
In Section~\ref{sec_prelim} we recall some basic facts on Schatten classes as well as operators thereon.
In Section~\ref{prelim_choi} we review different approaches to the Choi formalism for general Hilbert spaces, and we prove a new characterization for when 
the most common formalism is injective and surjective (Proposition~\ref{prop_choi_infdim}); the upshot there is that surjectivity never holds in infinite dimensions 
which is proven by considering certain reset maps $X\mapsto{\rm tr}(AX)B$. 
Then, in Section~\ref{sec_prelim_2} we focus on Schatten-class operators which themselves act on Schatten classes. Important new results shown therein are conditions on $\Phi$ and $B$ such that the map $X\mapsto\Phi(B^*XB)$ is trace class (Lemma~\ref{lemma_weighted_Phi_welldef}), and that for $\Phi$ completely positive the trace of $X\mapsto\Phi(B^*XB)$ can be computed explicitly via the Kraus operators of $\Phi$ (Lemma~\ref{lemma_v_ell2}).
Finally, Section~\ref{sec_uniquedecomp} is dedicated to 
our main result (Theorem~\ref{thm_main}).
As a special case we obtain (a family of) unique decompositions for generators of quantum-dynamical semigroups (Corollary~\ref{coro_unique_decomp_qds}).

\section{Preliminaries: Schatten Classes and Tensor Products}\label{sec_prelim}

We begin by quickly recapping some (notation from) operator theory in general and Schatten classes in particular.
Unless specified otherwise, $\mathcal H,\mathcal Z$ will---here and henceforth---denote arbitrary complex Hilbert spaces. 
Be aware that as our Hilbert spaces may be non-separable
we need to invoke nets as well as the concept of unordered summation in order to 
properly address questions of convergence\footnote{
Following \cite{Ringrose71}{}---or, alternatively, \cite[Ch.~12]{MeiseVogt97en}---recall that a
mapping $f:J\to \mathcal X$ from a non-empty set $J$ into a real or complex normed 
space $\mathcal X$ is called \textit{summable} (to $x\in\mathcal X$) if
the net $\{\sum_{j\in F}f(j)\}_{F\subseteq J\text{ finite}}$ norm-converges (to $x$).
}.
Moreover, as is standard in 
quantum information theory
$|x\rangle\langle y|$
will be short for the linear operator $z\mapsto \langle y,z\rangle x$.
This is also why, sometimes, we will write $|x\rangle$ instead of $x$.

With this, our notation for common operator spaces reads as follows:
$\mathcal L(\mathcal H,\mathcal Z)$ is the vector space of all linear maps $:\mathcal H\to\mathcal Z$,
while
$\mathcal B(\mathcal H,\mathcal Z)$ is the Banach space of all bounded linear maps (where $\|\cdot\|_\infty:=\sup_{x\in\mathcal H,\|x\|=1}\|(\cdot)x\|$ denotes the usual operator norm),
and $\mathcal K(\mathcal H,\mathcal Z)$ is the subspace of all compact maps (i.e.~linear maps such that the closure of the image of the
closed unit ball is compact);
these notions obviously generalize from Hilbert to Banach spaces.
The final step in this chain is to go to
Schatten classes:
Following
\cite{MeiseVogt97en}{}, \cite{Ringrose71}{}, or \cite{Dunford63}
every $X\in\mathcal K(\mathcal H,\mathcal Z)$ can be written as $X=\sum_{j\in N}s_j(X)|f_j\rangle\langle g_j|$ for some $N\subseteq\mathbb N$, some orthonormal systems $\{f_j\}_{j\in N},\{g_j\}_{j\in N}$ of $\mathcal Z$, $\mathcal H$, respectively, and a (unique) decreasing null sequence $\{s_j(X)\}_{j\in N}$.
This is known as \textit{Schmidt decomposition} of $X$ and 
the $s_j(X)$ are sometimes called \textit{singular values}.
Then, given any $p>0$ one defines the Schatten-$p$ class $\mathcal B^p(\mathcal H,\mathcal Z):=\{X\in\mathcal K(\mathcal H,\mathcal Z):\sum_js_j(X)^p<\infty\}$
with
corresponding Schatten ``norm''\,\footnote{
Note that---like for $\ell^p$-spaces---$\|\cdot\|_p$ is a norm if and only if $p\in[1,\infty]$ in which case $\mathcal B^p(\mathcal H,\mathcal Z)$ is even a Banach space \cite[Coro.~16.34]{MeiseVogt97en}.
}
$\|X\|_p:=(\sum_js_j(X)^p)^{1/p}
$.
Here one usually identifies $\mathcal B^\infty(\mathcal H,\mathcal Z):=\mathcal K(\mathcal H,\mathcal Z)$.
In particular, $|x\rangle\langle y|$ for all $x,y\in\mathcal H$ is in $\mathcal B^p(\mathcal H)$ for all $p>0$ because $\|\,|x\rangle\langle y|\,\|_p=\|x\|\|y\|$.
Recall the following basic (composition) rules of Schatten classes:
\begin{lemma}\label{lemma_schatten_comp}
Given complex Hilbert spaces $\mathcal H_1,\mathcal H_2,\mathcal H_3,\mathcal H_4$ and any $p,q,r>0$,
the following statements hold.
\begin{itemize}
\item[(i)] If $\frac1r=\frac1p+\frac1q$, then for all $X\in\mathcal B^p(\mathcal H_2,\mathcal H_3)$, $Y\in\mathcal B^q(\mathcal H_1,\mathcal H_2)$ one has $XY\in\mathcal B^r(\mathcal H_1,\mathcal H_3)$.
Moreover, if $p,q,r\geq 1$, then $\|XY\|_r\leq\|X\|_p\|Y\|_q$.
\item[(ii)] If $\frac1r=\frac1p+\frac1q$, then for all $X\in\mathcal B^r(\mathcal H_1,\mathcal H_2)$ there exist 
$Y\in\mathcal B^p(\mathcal H_2)$ and $Z\in \mathcal B^q(\mathcal H_1,\mathcal H_2)$ such that $X=YZ$.
\item[(iii)] If $q\geq p$, then for all $X\in\mathcal B^p(\mathcal H_1,\mathcal H_2)$, it holds that $\|X\|_q\leq\|X\|_p$.
In particular, $\mathcal B^p(\mathcal H_1,\mathcal H_2)\subseteq\mathcal B^q(\mathcal H_1,\mathcal H_2)$ whenever $q\geq p$.
\item[(iv)] For all 
$X\in\mathcal B(\mathcal H_3,\mathcal H_4)$, $Z\in\mathcal B(\mathcal H_1,\mathcal H_2)$, $Y\in\mathcal B^p(\mathcal H_2,\mathcal H_3)$
one has $XYZ\in\mathcal B^p(\mathcal H_1,\mathcal H_4)$ with
$\|XYZ\|_p\leq\|X\|\|Y\|_p\|Z\|$.
\end{itemize}
\end{lemma}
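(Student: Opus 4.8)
The plan is to derive all four parts from two elementary ingredients: the Schmidt decomposition and the minimax description of the singular values, $s_j(T)=\inf\{\|T-F\|_\infty : F\in\mathcal L(\mathcal H_1,\mathcal H_2),\ {\rm rank}\,F<j\}$, valid for every compact $T$. Part (iii) concerns only the null sequence $\{s_j(X)\}_j$ and is exactly the inclusion $\ell^p\subseteq\ell^q$ with $\|\cdot\|_{\ell^q}\le\|\cdot\|_{\ell^p}$ for $q\ge p$; I would prove this by normalization---assuming $\|X\|_p=1$ (the case $X=0$ being trivial) one has $s_j(X)\le1$, hence $s_j(X)^q\le s_j(X)^p$ for all $j$, and summing gives $\|X\|_q^q\le\|X\|_p^q=1$. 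The set inclusion $\mathcal B^p\subseteq\mathcal B^q$ follows, with the containment in $\mathcal B^\infty=\mathcal K$ built into the definitions.

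For (iv), the operator $XYZ$ is compact because $\mathcal K$ is a two-sided ideal in $\mathcal B$; moreover, given $\varepsilon>0$ and $F$ of rank $<j$ with $\|Y-F\|_\infty\le s_j(Y)+\varepsilon$, the map $XFZ$ has rank $<j$ and $\|XYZ-XFZ\|_\infty=\|X(Y-F)Z\|_\infty\le\|X\|_\infty\|Y-F\|_\infty\|Z\|_\infty$, so letting $\varepsilon\downarrow0$ yields $s_j(XYZ)\le\|X\|_\infty\|Z\|_\infty\,s_j(Y)$; raising to the $p$-th power and summing over $j$ gives $\|XYZ\|_p\le\|X\|_\infty\|Y\|_p\|Z\|_\infty$. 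For (ii) I would write the factorization down explicitly: from a Schmidt decomposition $X=\sum_{j\in N}s_j(X)|f_j\rangle\langle g_j|$ set $Y:=\sum_{j\in N}s_j(X)^{r/p}|f_j\rangle\langle f_j|$ (extended by zero off $\{f_j\}$) and $Z:=\sum_{j\in N}s_j(X)^{r/q}|f_j\rangle\langle g_j|$. Since $\frac1p+\frac1q=\frac1r$ and $\{s_j(X)^{r/p}\}_j,\{s_j(X)^{r/q}\}_j$ are again decreasing null sequences, these formulas are honest Schmidt decompositions, $YZ=X$, and $\|Y\|_p^p=\|Z\|_q^q=\sum_j s_j(X)^r=\|X\|_r^r<\infty$, whence $Y\in\mathcal B^p(\mathcal H_2)$ and $Z\in\mathcal B^q(\mathcal H_1,\mathcal H_2)$ (the degenerate cases $p=\infty$ or $q=\infty$ only requiring a minor variant, e.g.\ replacing $s_j(X)^{r/p}$ by a suitable power of the tail $\sum_{k\ge j}s_k(X)^r$).

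Part (i) is where the actual work lies. I would start from the classical multiplicative (Horn) inequality $\prod_{j=1}^n s_j(XY)\le\prod_{j=1}^n s_j(X)s_j(Y)$ for all $n$---itself a consequence of submultiplicativity of $\|\cdot\|_\infty$ under $n$-fold antisymmetric tensor powers. Passing to logarithms, the decreasing sequence $\{\log s_j(XY)\}_j$ is weakly majorized by $\{\log(s_j(X)s_j(Y))\}_j$; since $t\mapsto e^{rt}$ is increasing and convex, and such functions preserve weak majorization, this upgrades to $\sum_{j=1}^n s_j(XY)^r\le\sum_{j=1}^n(s_j(X)s_j(Y))^r$ for every $n$, hence for $n=\infty$. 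Finally, because $\frac rp+\frac rq=1$, Hölder's inequality for sequences gives $\sum_j s_j(X)^r s_j(Y)^r\le\big(\sum_j s_j(X)^p\big)^{r/p}\big(\sum_j s_j(Y)^q\big)^{r/q}=\|X\|_p^r\|Y\|_q^r$, so that $XY\in\mathcal B^r(\mathcal H_1,\mathcal H_3)$ and, taking $r$-th roots, $\|XY\|_r\le\|X\|_p\|Y\|_q$ once $p,q,r\ge1$ (for general exponents the same chain still delivers membership, just not a norm bound). I expect the main obstacle to be precisely this majorization step---turning Horn's product inequalities into the additive estimate for the $r$-th powers relies on the standard but genuinely non-trivial fact that weak (log-)majorization of decreasingly ordered sequences survives post-composition with an increasing convex function. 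Everything else is bookkeeping with the Schmidt decomposition; alternatively, one could prove (i) by complex interpolation between the endpoint cases $p,q,r\in\{1,\infty\}$, but the majorization route is more elementary and self-contained.
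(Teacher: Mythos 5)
Your proposal is correct. On the two parts the paper actually proves, you do exactly what the paper does: for (iii) the normalization trick ($\|X\|_p=1$ forces $s_j(X)\le1$, hence $s_j(X)^q\le s_j(X)^p$, sum), and for (ii) the identical factorization $Y=\sum_j s_j(X)^{r/p}|f_j\rangle\langle f_j|$, $Z=\sum_j s_j(X)^{r/q}|f_j\rangle\langle g_j|$ built from the Schmidt decomposition. Where you genuinely diverge is in (i) and (iv): the paper simply refers these to the literature (Meise--Vogt, Ringrose, Dunford--Schwartz), whereas you supply self-contained arguments --- the approximation-number characterization $s_j(XYZ)\le\|X\|_\infty s_j(Y)\|Z\|_\infty$ for (iv), and for (i) the classical Horn product inequalities upgraded via Weyl's majorization lemma (weak log-majorization composed with the increasing convex map $t\mapsto e^{rt}$) followed by H\"older with exponents $p/r,q/r$. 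Both arguments are sound, including for quasi-norm exponents $0<r<1$ since $1/r=1/p+1/q$ forces $p/r,q/r>1$; the only caveat is that the Weyl majorization step you flag is itself a nontrivial classical result you would need to prove or cite, so in practice your route trades the paper's direct citation of the H\"older-type inequality for a citation (or proof) of Weyl's lemma --- gaining self-containedness and covering all $p,q,r>0$ uniformly, at the cost of length.
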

\begin{proof}
We will only prove those statements which can not be found in 
the literature referred to above.
(ii): By assumption 
$X$
admits a Schmidt decomposition $\sum_{j\in N}s_j(X)|f_j\rangle\langle g_j|$.
Then $Y:=\sum_{j\in N}(s_j(X))^{r/p}|f_j\rangle\langle f_j|$ and $Z:=\sum_{j\in N}(s_j(X))^{r/q}|f_j\rangle\langle g_j|$ do the job.
(iii): This result is stated in \cite[Ch.~XI.9, Lemma 9]{Dunford63} but without a proof, so we will fill this gap for the reader's convenience.
Let $X\in \mathcal B^p(\mathcal H_1,\mathcal H_2)$ and w.l.o.g.~$X\neq 0$.
We have to show $\| \,X\|X\|_p^{-1} \,\|_q\leq 1$.
Defining $X':=X\|X\|_p^{-1}$ (i.e.~$\|X'\|_p=1$) one finds $s_j(X')\leq 1$ for all $j\in N$; in particular this shows $(s_j(X'))^{q/p}\leq s_j(X')$ (as $\frac{q}{p}\geq 1$).
This, as desired, implies $\|X'\|_q\leq 1$ due to $\|X'\|_q^q=\sum_{j\in N}(s_j(X'))^q\leq\sum_{j\in N}(s_j(X'))^p=1$.
\end{proof}

Important Schatten classes are the trace class $\mathcal B^1(\mathcal H,\mathcal Z)$---where, assuming $\mathcal H=\mathcal Z$, the trace
${\rm tr}(A):=\sum_{j\in J}\langle g_j,Ag_j\rangle$ is well defined and independent of the chosen orthonormal basis $\{g_j\}_{j\in J}$ of $\mathcal H$---as well as
the Hilbert-Schmidt class $\mathcal B^2(\mathcal H,\mathcal Z)$ which is itself a
Hilbert space with respect to the inner product $\langle X,Y\rangle_{\sf HS}:={\rm tr}(X^*Y)$.
A well-known, yet important fact is that an orthonormal basis of $\mathcal B^2(\mathcal H,\mathcal Z)$ is given by $\{|f_k\rangle\langle g_j|\}_{j\in J,k\in K}$
where $\{f_k\}_{k\in K}$, $\{g_j\}_{j\in J}$ are arbitrary orthonormal bases
of $\mathcal Z$, $\mathcal H$, respectively\footnote{
The idea---which I include because I could not find a reference which covers this result for non-separable Hilbert spaces---is that the span of the (obviously orthogonal) set $\{|f_k\rangle\langle g_j|\}_{j\in J,k\in K}$ is
dense in $\mathcal B^2(\mathcal H,\mathcal Z)$ (actually: dense in $\mathcal B^p$ for all $p\in[1,\infty]$, cf.~Coro.~\ref{coro_block_approx_nonsep} in 
the appendix).
Hence it is an orthonormal basis \cite[Thm.~1.6.3]{Ringrose71}.
}.

Another important concept we need in this regard is the tensor product of Hilbert spaces and of operators thereon.
We will write $\mathcal H\otimes\mathcal Z$ for the (Hilbert space) tensor product of $\mathcal H,\mathcal Z$;
then, given any orthonormal bases $\{g_j\}_{j\in J}$, $\{f_k\}_{k\in K}$ of $\mathcal H$, $\mathcal Z$, respectively, $\{g_j\otimes f_k\}_{j\in J,k\in K}$ is an orthonormal basis of $\mathcal H\otimes\mathcal Z$.
On the level of operators, $\mathcal B^2(\mathcal H\otimes\mathcal Z)$
is isometrically isomorphic (in the sense of Hilbert spaces) to 
$\mathcal B^2(\mathcal H)\otimes\mathcal B^2(\mathcal Z)$ for all complex 
Hilbert spaces $\mathcal H,\mathcal Z$.
This is due to the fact that 
$\{|f_j\rangle\langle f_k|\otimes|g_a\rangle\langle g_b|\}_{j,k\in J,a,b\in A}=\{|f_j\otimes g_a\rangle\langle f_k\otimes g_b|\}_{j,k\in J,a,b\in A}$ 
is an orthonormal basis for both of these spaces, where $\{f_j\}_{j\in J}$, $\{g_a\}_{a\in A}$ is any orthonormal basis of $\mathcal H$, $\mathcal Z$, respectively.
For more details we refer to \cite[Ch.~2.6]{Kadison83} or \cite[Appendix~A.3]{vE_PhD_2020}.

\paragraph*{Operators on Schatten Classes}
The objects central to our main results are linear maps between operator spaces (sometimes called \textit{superoperators}).
First, given any $\Phi\in\mathcal B(\mathcal B^p(\mathcal H),\mathcal B^q(\mathcal Z))$, $p,q\geq 1$ we denote the operator norm $\sup_{A\in\mathcal B^p(\mathcal H),\|A\|_p=1}\|\Phi(A)\|_q$ by $\|\Phi\|_{p\to q}$\,; 
similarly we write $\|\Phi\|_{\infty\to\infty}$ for the norm of any $\Phi\in\mathcal B(\mathcal B(\mathcal H),\mathcal B(\mathcal Z))$.
Next, some $\Phi\in\mathcal L(\mathcal B^1(\mathcal H),\mathcal B^1(\mathcal Z))$ is called
\begin{itemize}
\item \textit{positive} if for all $A\in\mathcal B^1(\mathcal H)$ positive semi-definite (i.e.~$A=A^*$
and
$\langle x,Ax\rangle\geq 0$ for all $x\in\mathcal H$; usually denoted by
$A\geq 0$)
one has $\Phi(A)\geq 0$.
\item \textit{$n$-positive} for some $n\in\mathbb N$, if ${\rm id}_n\otimes\Phi:\mathcal B^1(\mathbb C^n\otimes\mathcal H)\to\mathcal B^1(\mathbb C^n\otimes\mathcal Z)$ defined via\footnote{
Here one uses implicitly that $\mathbb C^n\otimes\mathcal H\simeq\mathcal H\times\ldots\times\mathcal H$ \cite[Rem.~2.6.8]{Kadison83}
so $\mathcal B(\mathbb C^n\otimes\mathcal H)$ can be identified with $\mathbb C^{n\times n}\otimes\mathcal B(\mathcal H)$ \cite[p.~147 ff.]{Kadison83}, and similarly for the trace class.
}
\begin{align*}
({\rm id}_n\otimes\Phi)\begin{pmatrix}
A_{11}&\cdots&A_{1n}\\
\vdots&\ddots&\vdots \\
A_{n1}&\cdots&A_{nn}
\end{pmatrix}:=\begin{pmatrix}
\Phi(A_{11})&\cdots&\Phi(A_{1n})\\
\vdots&\ddots&\vdots \\
\Phi(A_{n1})&\cdots&\Phi(A_{nn})
\end{pmatrix}
\end{align*}
for all $\{A_{jk}\}_{j,k=1}^n\subset\mathcal B^1(\mathcal H)$ is positive for all $n\in\mathbb N$.
\item \textit{completely positive} if $\Phi$ is $n$-positive for all $n\in\mathbb N$.
We denote the set of all completely positive maps $\Phi:\mathcal B^1(\mathcal H)\to\mathcal B^1(\mathcal Z)$ by $\mathsf{CP}(\mathcal H,\mathcal Z)$.
\end{itemize}
\noindent The notions for maps $\Phi\in\mathcal L(\mathcal B(\mathcal H),\mathcal B(\mathcal Z))$ are analogous by means of the isometric isomorphism
$\mathcal B(\mathcal Z,\mathcal H)\simeq (\mathcal B^1(\mathcal H,\mathcal Z))'$, $B\mapsto {\rm tr}(B(\cdot))$.
Indeed, this not only translates 
the weak${}^*$-topology on $(\mathcal B^1(\mathcal H,\mathcal Z))'$ into a topology on $\mathcal B(\mathcal Z,\mathcal H)$---called the \textit{ultraweak topology}\,\footnote{
i.e.~a net $\{B_j\}_{j\in J}\subseteq\mathcal B(\mathcal Z,\mathcal H)$ converges to $B\in\mathcal B(\mathcal Z,\mathcal H)$ in the ultraweak topology (``ultraweakly'') if and only if $\{{\rm tr}(B_jA)\}_{j\in J}$ converges to ${\rm tr}(BA)$ for all $A\in \mathcal B^1(\mathcal H,\mathcal Z)$.
}---but
every $\Phi\in\mathcal B(\mathcal B^1(\mathcal H),\mathcal B^1(\mathcal Z))$ induces a unique dual map $\Phi^*\in\mathcal B(\mathcal B(\mathcal Z),\mathcal B(\mathcal H))$ via ${\rm tr}(\Phi(A)B)={\rm tr}(A\Phi^*(B))$ for all $A\in\mathcal B^1(\mathcal H)$, $B\in\mathcal B(\mathcal Z)$.
Then $\|\Phi\|_{1\to 1}=\|\Phi^*\|_{\infty\to\infty}$ and
(complete) positivity of $\Phi$ is 
well known to be equivalent to (complete) positivity of $\Phi^*$.

\section{Recap: The Choi Matrix in Infinite Dimensions}\label{prelim_choi}

As discussed before we need to 
make sense of the Choi matrix $({\rm id}\otimes\Phi)(|\Gamma\rangle\langle\Gamma|)$ in infinite dimensions, and there are different ways to go about this.
Two approaches where 
none of the involved objects get modified are as follows:
One can either consider the quadratic form induced by the Choi matrix---called
Choi-Jamio\l{}kowski form---cf.~\cite{Holevo11b,Holevo11,Haapasalo21}{},
or one can take appropriate inductive limits 
by considering finite truncations of $|\Gamma\rangle\langle\Gamma|$ as well as of the output, 
cf.~\cite{Friedland19}{}.
For our purposes, however, we can adopt a more naive method where one weights the input of the ``usual'' Choi matrix, i.e.~one replaces $|\Gamma\rangle$ by $\Gamma_{\lambda,G}:=\sum_{j\in J}\lambda_j^* g_j\otimes g_j\in\mathcal H\otimes\mathcal H$ with
$\lambda\in\ell^2(J,\mathbb C)$ where $G:=\{g_j\}_{j\in J}$ is any orthonormal basis of $\mathcal H$, cf.~\cite{LiDu15}{}.
The state $\Gamma_{\lambda,G}$ is also known
as two-mode squeezed vacuum state, cf.~\cite{SC85}{}
which is why 
this approach
being widely adopted in, e.g., quantum optics, cf.~\cite{PLOB17,KBUP17} as well as \cite[Ch.~5.2]{Serafini17}.
One drawback of this weighting approach is that for general $\Phi\in\mathcal B(\mathcal B^1(\mathcal H))$ the object $({\rm id}\otimes\Phi)(|\Gamma_{\lambda,G}\rangle\langle \Gamma_{\lambda,G}|)$ may be problematic:
Indeed, if $\Phi$ is the transposition map (w.r.t.~an arbitrary but fixed orthonormal basis), then $\Phi$ is positive and trace-preserving but the corresponding operator $({\rm id}\otimes\Phi)(|\Gamma_{\lambda,G}\rangle\langle \Gamma_{\lambda,G}|)$ is only densely defined, cf.~\cite{Tomiyama83,Paulsen03}{}.

The first option for guaranteeing existence of $({\rm id}\otimes\Phi)(|\Gamma_{\lambda,G}\rangle\langle \Gamma_{\lambda,G}|)$ is to restrict
$\Phi$ to completely bounded maps\footnote{A map $\Phi\in\mathcal B(\mathcal B^1(\mathcal H))$ is called completely bounded if $\sup_{n\in\mathbb N}\|{\rm id}_n\otimes\Phi\|_{1\to 1}<\infty$.}, cf.~\cite{Stormer15,Magajna21,HKS24}{}.
This is usually done in the framework of von Neumann algebras and factors where the explicit vector $\Gamma_{\lambda,G}$ from above is replaced by an abstract separating and cyclic vector of the factor at hand (in our case: ${\bf1}\otimes\mathcal B(\mathcal H)$).
However, while every completely positive map is completely bounded 
\cite[Prop.~3.6]{Paulsen03},
the completely bounded maps are nowhere dense in $\mathcal B(\mathcal B^1(\mathcal H))$ 
\cite[Thm.~2.4 \& 2.5]{Smith83}.
In contrast, the second way to make sense of the Choi formalism is the one of \cite{LiDu15}
where one defines the Choi operator via an appropriate infinite sum.
This is also the route we will take as it is precisely what
we will need when proving our main result in Section~\ref{sec_uniquedecomp}.
Because we are allowing for non-separable Hilbert spaces we also provide a sketch of the proof for the following lemma.
\begin{lemma}\label{lemma_choi}
Given
any orthonormal basis $G:=\{g_j\}_{j\in J}$ of $\mathcal H$
and any $\lambda\in\ell^2(J,\mathbb C)$
\begin{align*}
\mathsf C_{\lambda,G}:\mathcal B(\mathcal B^1(\mathcal H),\mathcal B^1(\mathcal Z))&\to \mathcal B^2(\mathcal H\otimes\mathcal Z)\\
\Phi&\mapsto \sum_{j,k\in J}\lambda_j^*\lambda_k|g_j\rangle\langle g_k|\otimes\Phi(|g_j\rangle\langle g_k|)
\end{align*}
is well-defined, linear, and bounded with $\|\mathsf C_{\lambda,G}\|\leq\|\lambda\|_{2}^2$.
Moreover,
$\mathsf C_{\lambda,G}(\Phi)$ is positive semi-definite
whenever $\Phi\in\mathsf{CP}(\mathcal H,\mathcal Z)$.
\end{lemma}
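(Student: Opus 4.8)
The plan is to reduce everything to finite truncations and then pass to a limit, because one \emph{cannot} simply invoke continuity of ${\rm id}\otimes\Phi$ along $\Gamma_{\lambda,G}=\lim_F\sum_{j\in F}\lambda_j^*g_j\otimes g_j$: for a general bounded $\Phi$ the maps ${\rm id}_n\otimes\Phi$ need not be uniformly bounded on the trace class as $n\to\infty$ (that would force $\Phi$ to be completely bounded). So I would fix $\Phi\in\mathcal B(\mathcal B^1(\mathcal H),\mathcal B^1(\mathcal Z))$ and, for every finite $E\subseteq J\times J$, set $S_E:=\sum_{(j,k)\in E}\lambda_j^*\lambda_k|g_j\rangle\langle g_k|\otimes\Phi(|g_j\rangle\langle g_k|)\in\mathcal B^2(\mathcal H\otimes\mathcal Z)$; the net of these partial sums, directed by inclusion, is what should define $\mathsf C_{\lambda,G}(\Phi)$. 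The key point is that, since $\{|g_j\rangle\langle g_k|\}_{j,k\in J}$ is orthonormal in $\mathcal B^2(\mathcal H)$ and the Hilbert--Schmidt inner product factorizes over the tensor product, i.e.\ $\langle A_1\otimes B_1,A_2\otimes B_2\rangle_{\sf HS}=\langle A_1,A_2\rangle_{\sf HS}\langle B_1,B_2\rangle_{\sf HS}$, all cross terms cancel when one expands $\|S_{E'}-S_E\|_2^2$ for $E\subseteq E'$, leaving precisely $\sum_{(j,k)\in E'\setminus E}|\lambda_j|^2|\lambda_k|^2\,\|\Phi(|g_j\rangle\langle g_k|)\|_2^2$. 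Bounding $\|\Phi(|g_j\rangle\langle g_k|)\|_2\leq\|\Phi(|g_j\rangle\langle g_k|)\|_1\leq\|\Phi\|_{1\to 1}\,\|\,|g_j\rangle\langle g_k|\,\|_1=\|\Phi\|_{1\to 1}$ (Lemma~\ref{lemma_schatten_comp}(iii) and $\|\,|x\rangle\langle y|\,\|_1=\|x\|\|y\|$) then gives $\|S_{E'}-S_E\|_2^2\leq\|\Phi\|_{1\to 1}^2\sum_{(j,k)\in E'\setminus E}|\lambda_j|^2|\lambda_k|^2$. As $\lambda\in\ell^2(J,\mathbb C)$, the non-negative family $\{|\lambda_j|^2|\lambda_k|^2\}_{(j,k)\in J\times J}$ has all finite partial sums bounded by $\|\lambda\|_2^4$ and is hence summable, so its tails vanish; therefore $\{S_E\}$ is Cauchy in the Hilbert space $\mathcal B^2(\mathcal H\otimes\mathcal Z)$ and converges. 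This gives well-definedness; linearity is immediate from linearity of each $\Phi\mapsto S_E$; and passing to the limit in $\|S_E\|_2^2\leq\|\Phi\|_{1\to 1}^2\|\lambda\|_2^4$ yields $\|\mathsf C_{\lambda,G}(\Phi)\|_2\leq\|\lambda\|_2^2\|\Phi\|_{1\to 1}$, i.e.\ $\|\mathsf C_{\lambda,G}\|\leq\|\lambda\|_2^2$.

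For the positivity statement I would pass to the cofinal subnet of ``square'' index sets. For finite $F\subseteq J$ the partial sum $S_{F\times F}$ equals $({\rm id}\otimes\Phi)(|\Gamma_F\rangle\langle\Gamma_F|)$ with $\Gamma_F:=\sum_{j\in F}\lambda_j^*\,g_j\otimes g_j$, and identifying the first tensor leg with the $|F|$-dimensional space ${\rm span}\{g_j:j\in F\}\simeq\mathbb C^{|F|}$ turns this into ${\rm id}_{|F|}\otimes\Phi$ applied to the positive semi-definite rank-one operator $|\Gamma_F\rangle\langle\Gamma_F|$. Hence $|F|$-positivity of $\Phi$ (which holds since $\Phi\in\mathsf{CP}(\mathcal H,\mathcal Z)$) gives $S_{F\times F}\geq 0$ for every finite $F$. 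As $\{F\times F\}_F$ is cofinal in the net of all finite subsets of $J\times J$, the limit $\mathsf C_{\lambda,G}(\Phi)$ is also the $\|\cdot\|_2$-limit of the $S_{F\times F}$; and since $A\mapsto\langle x,Ax\rangle$ is $\|\cdot\|_2$-continuous for each fixed $x$ (because $\|A\|_\infty\leq\|A\|_2$, again Lemma~\ref{lemma_schatten_comp}(iii)), the cone of positive semi-definite operators is $\|\cdot\|_2$-closed, so $\mathsf C_{\lambda,G}(\Phi)\geq 0$.

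I do not expect a genuinely deep obstacle here; the main thing to get right is the bookkeeping forced by allowing $\mathcal H$ to be non-separable. Convergence must be phrased through nets of finite partial sums (unordered summation) rather than sequences; the cross terms in $\|S_{E'}-S_E\|_2^2$ must be checked to cancel \emph{exactly} --- this is the step that does all the work, and it rests squarely on orthonormality of $\{|g_j\rangle\langle g_k|\}$ in $\mathcal B^2(\mathcal H)$ together with the factorization of $\langle\cdot,\cdot\rangle_{\sf HS}$ over the tensor product; and one must verify the cofinality of the ``square'' subnet so that positivity of the $S_{F\times F}$ transfers to the limit. The $\ell^2$-summability of $\{|\lambda_j|^2|\lambda_k|^2\}$ over $J\times J$ and the remark that $n$-positivity of $\Phi$ already suffices over an $n$-dimensional ancilla are then routine.
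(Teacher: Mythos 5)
Your proposal is correct and follows essentially the same route as the paper: well-definedness via orthogonality of the family $\{\lambda_j^*\lambda_k|g_j\rangle\langle g_k|\otimes\Phi(|g_j\rangle\langle g_k|)\}$ plus the bound $\|\Phi(|g_j\rangle\langle g_k|)\|_2\leq\|\Phi\|_{1\to1}$ (you re-derive the summability criterion by a Cauchy-net argument where the paper cites it), and positivity by identifying the square truncations with $({\rm id}_{|F|}\otimes\Phi)(|\Gamma_F\rangle\langle\Gamma_F|)$ and passing to the limit. The only cosmetic difference is that you transfer positivity via $\|\cdot\|_2$-closedness of the positive cone, while the paper uses the weak limit and the trace characterization of positivity; both work.
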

\begin{proof}
The key observation is that $\{\lambda_j^*\lambda_k|g_j\rangle\langle g_k|\otimes\Phi(|g_j\rangle\langle g_k|):(j,k)\in J\times J\}$ is an orthogonal subset of $\mathcal B^2(\mathcal H\otimes\mathcal Z)$
meaning it is summable
if and only if $\sum_{j,k\in J}\|\lambda_j^*\lambda_k|g_j\rangle\langle g_k|\otimes\Phi(|g_j\rangle\langle g_k|)\|_2^2<\infty$ \cite[Lemma~1.6.1]{Ringrose71}.
Using Lemma~\ref{lemma_schatten_comp}~(iii) the latter sum evaluates to
\begin{align*}
\sum_{j,k\in J}\|\lambda_j^*\lambda_k|g_j\rangle\langle g_k|\otimes\Phi(|g_j\rangle\langle g_k|)\|_2^2&=\sum_{j,k\in J}|\lambda_j|^2|\lambda_k|^2\,\big\||g_j\rangle\langle g_k|\big\|_2^2\big\|\Phi(|g_j\rangle\langle g_k|)\big\|_2^2\\
&\leq \sum_{j,k\in J}|\lambda_j|^2|\lambda_k|^2\big\|\Phi(|g_j\rangle\langle g_k|)\big\|_1^2\\
&\leq\|\Phi\|_{1\to 1}^2\|\lambda\|_2^4<\infty\,.
\end{align*}
Hence $\mathsf C_{\lambda,G}$ is well defined, linear, and (again by \cite[Lemma~1.6.1]{Ringrose71}) bounded with $\|\mathsf C_{\lambda,G}(\Phi)\|_{2}^2\leq\|\Phi\|_{1\to 1}^2\|\lambda\|_{2}^4$, i.e.~$\|\mathsf C_{\lambda,G}\|\leq\|\lambda\|_{2}^2$.
For the final statement, given $F\subseteq J$ non-empty and finite define $\psi_F:=\sum_{j\in F}\lambda_j^* |j\rangle\otimes g_j\in\mathbb C^{|F|}\otimes\mathcal H$ and \mbox{$U_F:{\rm span}\{g_j:j\in F\}\to\mathbb C^{|F|}$} as the unique linear map such that $Ug_j=|j\rangle$ for all $j\in F$.
In particular,
$U_F$ is unitary.
With this, one readily verifies 
$\sum_{j,k\in F}\lambda_j^*\lambda_k|g_j\rangle\langle g_k|\otimes\Phi(|g_j\rangle\langle g_k|)=(U_F\otimes{\bf1}_{\mathcal H})^*( ({\rm id}_{|F|}\otimes\Phi)(|\psi_F\rangle\langle\psi_F|)) (U_F\otimes{\bf1}_{\mathcal H})$.
But the latter is $\geq 0$
because $\Phi$ is completely positive; thus the same holds for
the
weak
limit $\mathsf C_{\lambda,G}(\Phi)$ of $\{\sum_{j,k\in F}\lambda_j^*\lambda_k|g_j\rangle\langle g_k|\otimes\Phi(|g_j\rangle\langle g_k|)\}_{F\subseteq J\text{ finite}}$,
because
for any $X\in\mathcal B^2(\mathcal H)$, $X\geq 0$ 
if and only if ${\rm tr}(BX)\geq 0$ for all $B\in\mathcal B^2(\mathcal H)$, $B\geq 0$.
\end{proof}
We waived the converse of this lemma (i.e.~Choi operator being positive implies complete positivity, provided $\lambda_j\neq0$ for all $j\in J$) because we will not need it in this work, and because $\lambda\in\ell^2(J,\mathbb C\setminus\{0\})$ only makes sense in the separable case anyway which is already covered, e.g., by \cite[Thm.~1.4]{LiDu15}.

The feature of the Choi formalism
most
important 
to quantum information theory
is that it establishes a one-to-one relation between completely positive trace-preserving linear maps---also known as ``CPTP maps'' or ``quantum channels''---and quantum states (positive semi-definite trace-class operators of unit trace) on the larger space $\mathcal H\otimes\mathcal H$ which satisfy\footnote{
In what follows, given any $A\in\mathcal B^1(\mathcal H\otimes\mathcal Z)$ we write ${\rm tr}_{\mathcal H}(A)$
for the unique operator in $\mathcal B^1(\mathcal Z)$ which satisfies ${\rm tr}({\rm tr}_{\mathcal H}(A)B)={\rm tr}(A({\bf1}_{\mathcal H}\otimes B))$ for all $B\in\mathcal B(\mathcal Z)$.
\label{footnote_part_trace}
}
${\rm tr}_{\mathcal H}(\rho)=\frac{{\bf1}_{\mathcal Z}}{\dim(\mathcal Z)}$ \cite[Thm.~4.48]{Heinosaari12}.
More generally, the one-to-one correspondence in finite dimensions is between completely positive maps and positive semi-definite matrices, assuming $\lambda_j\neq 0$ for all $j$.
There are two possible approaches of extending this to infinite dimensions:
either one restricts the domain of $\mathsf C_{\lambda,G}$ to the completely bounded maps, or one restricts the sequence $\lambda$ to
something absolutely summable.
Indeed, the previously discussed transposition map example shows that $\lambda\in\ell^1(J,\mathbb C)$ is the ``best possible choice'': for no $\lambda\in\ell^p(J,\mathbb C)\setminus\ell^1(J,\mathbb C)$ and no $p>1$ 
would $\mathsf C_{\lambda,G}$ (with co-domain $\mathcal B^1$) be well defined.
As the path via completely bounded maps has been sufficiently explored already (more on this in a bit) we will pursue the $\ell^1$-approach.
Doing so---like in Lemma~\ref{lemma_choi}---yields a well-defined map
\begin{equation}\label{eq:choi_B1}
\begin{split}
\mathsf C_{\lambda,G}:\mathcal B(\mathcal B^1(\mathcal H),\mathcal B^1(\mathcal Z))&\to \mathcal B^1(\mathcal H\otimes\mathcal Z)\\
\Phi&\mapsto \sum_{j,k\in J}\lambda_j^*\lambda_k|g_j\rangle\langle g_k|\otimes\Phi(|g_j\rangle\langle g_k|)\,.
\end{split}
\end{equation}
However, even with this modification in place it turns out that the channel-state (and even the CP-PSD) duality of the Choi formalism is a purely finite-dimensional effect.
More precisely, injectivity of $\mathsf C_{\lambda,G}$ needs separability of $\mathcal H$, and surjectivity never holds as soon as $\mathcal H$ is infinite dimensional.
This complements recent, similar results for the completely bounded approach \cite[Thms.~2.2 \& 3.3]{HKS24},
and this will be an important insight when discussing possible generalizations of our main result later on (Sec.~\ref{sec_open_q}).

\begin{proposition}\label{prop_choi_infdim}
Given any complex Hilbert spaces $\mathcal H,\mathcal Z$, any orthonormal basis $G:=\{g_j\}_{j\in J}$ of $\mathcal H$,
and any $\lambda\in\ell^1(J,\mathbb C)$
the map $\mathsf C_{\lambda,G}$ from Eq.~\eqref{eq:choi_B1} is
\begin{itemize}
\item[(i)]
injective if and only if $\mathcal H$ is separable and $\lambda_j\neq 0$ for all $j\in J$.
\item[(ii)]
surjective if and only if $\mathcal H$ is finite-dimensional and $\lambda_j\neq 0$ for all $j\in J$.
\end{itemize}
In particular, if $\dim\mathcal H=\infty$, then there exist positive semi-definite trace-class operators on $\mathcal H\otimes\mathcal Z$ which have empty pre-image under $\mathsf C_{\lambda,G}$, regardless of the chosen $\lambda\in\ell^1(J,\mathbb C)$ and the chosen orthonormal basis $\{g_j\}_{j\in J}$ of $\mathcal H$.
\end{proposition}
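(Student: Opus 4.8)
The plan is to reduce everything to one ``block extraction'' identity. For $a,b\in J$ let $E_a\colon\mathcal Z\to\mathcal H\otimes\mathcal Z$ be the isometry $z\mapsto g_a\otimes z$, and for $\rho\in\mathcal B^1(\mathcal H\otimes\mathcal Z)$ put $\rho_{ab}:=E_a^*\rho E_b\in\mathcal B^1(\mathcal Z)$ (well defined by Lemma~\ref{lemma_schatten_comp}(iv)). Because the series in \eqref{eq:choi_B1} converges in $\mathcal B^1$, hence in operator norm (Lemma~\ref{lemma_schatten_comp}(iii)), one may pair it with the fixed vectors $g_a\otimes x$, $g_b\otimes y$ termwise; all summands except $(j,k)=(a,b)$ drop out, leaving
\[
(\mathsf C_{\lambda,G}(\Phi))_{ab}=\lambda_a^*\lambda_b\,\Phi(|g_a\rangle\langle g_b|)\qquad\text{for all }a,b\in J.
\]
Two routine facts accompany it: ${\rm span}\{|g_a\rangle\langle g_b|:a,b\in J\}$ is dense in $\mathcal B^1(\mathcal H)$ (Corollary~\ref{coro_block_approx_nonsep}), and an operator in $\mathcal B^1(\mathcal H\otimes\mathcal Z)$ is determined by its blocks $\{\rho_{ab}\}$ (the $g_a\otimes x$ being total). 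I assume $\mathcal H,\mathcal Z\neq\{0\}$, the remaining case being vacuous.

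\medskip
\noindent\textbf{Injectivity (i).}
If $\mathcal H$ is separable and $\lambda_j\neq0$ for all $j$, then $\mathsf C_{\lambda,G}(\Phi)=0$ forces $\Phi(|g_a\rangle\langle g_b|)=0$ for all $a,b$ by the displayed identity, hence $\Phi=0$ by density and boundedness. Conversely, if $\lambda_{j_0}=0$ for some $j_0$, the (bounded, nonzero) reset map $\Phi\colon X\mapsto{\rm tr}(|g_{j_0}\rangle\langle g_{j_0}|X)B=\langle g_{j_0},Xg_{j_0}\rangle B$, for any fixed $0\neq B\in\mathcal B^1(\mathcal Z)$, has all blocks $\lambda_a^*\lambda_b\,\delta_{aj_0}\delta_{bj_0}B$, and the only possibly surviving one carries the factor $|\lambda_{j_0}|^2=0$; so $\mathsf C_{\lambda,G}(\Phi)=0$ and $\mathsf C_{\lambda,G}$ fails to be injective. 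Finally, if $\mathcal H$ is non-separable then $J$ is uncountable while $\lambda\in\ell^1(J,\mathbb C)$ has countable support, so some $\lambda_{j_0}$ vanishes and we are back in the previous case.

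\medskip
\noindent\textbf{Surjectivity (ii).}
If $\mathcal H$ is finite-dimensional and $\lambda_j\neq0$ for all $j$, then $\mathcal B^1(\mathcal H)$ is finite-dimensional and, for any target $\rho$, the prescription $\Phi(|g_a\rangle\langle g_b|):=(\lambda_a^*\lambda_b)^{-1}\rho_{ab}$ defines an (automatically bounded) linear map with $\mathsf C_{\lambda,G}(\Phi)=\rho$, so $\mathsf C_{\lambda,G}$ is onto. For the converse there are two cases, in each of which I exhibit an explicit positive semi-definite trace-class $\rho$ with empty preimage. If $\lambda_{j_0}=0$ (which, by the argument above, happens in particular whenever $\mathcal H$ is non-separable), take $\rho:=|g_{j_0}\otimes u\rangle\langle g_{j_0}\otimes u|$ with $0\neq u\in\mathcal Z$: its $(j_0,j_0)$-block is $|u\rangle\langle u|\neq0$, whereas $(\mathsf C_{\lambda,G}(\Phi))_{j_0j_0}=|\lambda_{j_0}|^2\Phi(|g_{j_0}\rangle\langle g_{j_0}|)=0$ for every $\Phi$. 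If instead $\lambda_j\neq0$ for all $j$ but $\dim\mathcal H=\infty$, then $J$ is countably infinite and $\lambda_j\to0$; fix a unit vector $u\in\mathcal Z$, set $v:=\sum_{j\in J}|\lambda_j|^{1/2}g_j\in\mathcal H$ (convergent, as $\sum_j|\lambda_j|=\|\lambda\|_1<\infty$), and take $\rho:=|v\otimes u\rangle\langle v\otimes u|\geq0$. Any $\Phi$ with $\mathsf C_{\lambda,G}(\Phi)=\rho$ is forced, by the block identity, to act on finite-rank operators as the reset map $X\mapsto\langle\xi,X\xi\rangle\,|u\rangle\langle u|$ attached to the non-normalizable vector with coefficients $\xi_j:=|\lambda_j|^{1/2}/\lambda_j$; since $|\xi_j|=|\lambda_j|^{-1/2}$ and $\sum_j|\lambda_j|^{-1}=\infty$, this $\xi$ is not in $\mathcal H$. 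Testing $\Phi$ on the unit vectors $w_n$ obtained by normalizing $\sum_{j=1}^n\xi_j g_j$ yields $\|\Phi(|w_n\rangle\langle w_n|)\|_1=|\langle\xi,w_n\rangle|^2=\sum_{j=1}^n|\lambda_j|^{-1}\to\infty$, contradicting $\Phi\in\mathcal B(\mathcal B^1(\mathcal H),\mathcal B^1(\mathcal Z))$. Hence no such $\Phi$ exists.

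\medskip
\noindent\textbf{The closing assertion, and the main obstacle.}
The final claim is then immediate: for $\dim\mathcal H=\infty$ the hypothesis of (ii) fails, so $\mathsf C_{\lambda,G}$ is not surjective, and since every trace-class operator is a linear combination of four positive semi-definite ones, some positive semi-definite trace-class operator lies outside the (linear) range --- for every $\lambda\in\ell^1(J,\mathbb C)$ and every orthonormal basis $G$; the operators constructed in (ii) are explicit witnesses. I expect the one genuinely delicate step to be the infinite-dimensional half of (ii): one must produce a positive semi-definite trace-class $\rho$ whose ``unweighted blocks'' $(\lambda_a^*\lambda_b)^{-1}\rho_{ab}$ cannot be the block data of any \emph{bounded} superoperator, and this must work for an arbitrary $\ell^1$ sequence $\lambda$, not merely a monotone one. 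The product vector $v=\sum_j|\lambda_j|^{1/2}g_j$ is tailored so that unweighting manufactures precisely the rank-one ``operator'' $|\xi\rangle\langle\xi|$ with $\xi\notin\mathcal H$, the blow-up of $\|\Phi(|w_n\rangle\langle w_n|)\|_1$ being forced by nothing beyond $\lambda\in\ell^1$. The remaining points --- legitimacy of the termwise evaluation in the block identity, boundedness of the reset maps used, and the fact that an operator is determined by its blocks --- are routine.
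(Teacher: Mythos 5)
Your proof is correct, and for most of the statement it runs along the same lines as the paper's: the block identity $(\mathsf C_{\lambda,G}(\Phi))_{ab}=\lambda_a^*\lambda_b\Phi(|g_a\rangle\langle g_b|)$, the density of ${\rm span}\{|g_a\rangle\langle g_b|\}$ for injectivity, the reset map killed by a vanishing $\lambda_{j_0}$ (with non-separability forcing such a $j_0$ via countable support of $\ell^1$ families), and the block-wise construction of a preimage in finite dimensions are exactly the paper's steps. Where you genuinely diverge is the infinite-dimensional obstruction to surjectivity. The paper uses one fixed witness, $Y=\bigl(\sum_{j\in\mathbb N}j^{-2}|g_j\rangle\langle g_j|\bigr)\otimes|z\rangle\langle z|$, and derives a contradiction by comparing the decay forced on $|\lambda_j|$ with the harmonic series; since that argument derives $\lambda_j\neq0$ rather than assuming it, the single operator $Y$ works for every $\lambda\in\ell^1$ and every basis, which gives the ``in particular'' clause with no case distinction. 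You instead tailor a rank-one witness $|v\otimes u\rangle\langle v\otimes u|$ with $v=\sum_j|\lambda_j|^{1/2}g_j$ to the given $\lambda$ (in the subcase $\lambda_j\neq0$ for all $j$), so that the forced preimage is the reset map attached to the non-normalizable vector $\xi_j=|\lambda_j|^{1/2}/\lambda_j$, and you exhibit unboundedness directly via $\|\Phi(|w_n\rangle\langle w_n|)\|_1=\sum_{j\le n}|\lambda_j|^{-1}\to\infty$ — a computation I checked and which is sound, the value of $\Phi$ on these finite-rank operators being fully determined by the block identity. This buys a rank-one (hence arguably more concrete) counterexample and makes explicit the ``unbounded formal preimage'' mechanism that the paper only sketches in a remark after its proof; the price is the extra subcase split (some $\lambda_j=0$ versus none) needed to cover arbitrary $\lambda$ in the final claim, which your two explicit witnesses do cover, so the additional soft argument via the four-PSD decomposition is valid but redundant.
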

\begin{proof}
(i): ``$\Rightarrow$'': If $\mathcal H$ is not separable, then $J$ is uncountable meaning
$\lambda_j=0$ for some $j\in J$ \cite[Lemma~1.2.7]{Ringrose71}.
Thus it suffices to show that $\mathsf C_{\lambda,G}$ has non-trivial kernel 
as soon as $\lambda_j=0$ for some $j\in J$.
For this note that $\Phi_j\in\mathcal B(\mathcal B^1(\mathcal H),\mathcal B^1(\mathcal Z))$ defined via $\Phi_j(X):=\langle g_j,Xg_j\rangle Z$ (where $Z\in\mathcal B^1(\mathcal Z)\setminus\{0\}$ is arbitrary but fixed, so $\Phi_j\neq 0$)
is in the kernel of $\mathsf C_{\lambda,G}$
due to $\mathsf C_{\lambda,G}(\Phi_j)= |\lambda_j|^2|g_j\rangle\langle g_j|\otimes Z=0 $.

``$\Leftarrow$'': Given two elements $\Phi_1\neq \Phi_2$ from $\mathcal B(\mathcal B^1(\mathcal H),\mathcal B^1(\mathcal Z))$
there have to exist $j,k\in J$ such
that 
$\Phi_1(|g_j\rangle\langle g_k|)\neq \Phi_2(|g_j\rangle\langle g_k|)$ for some $j,k\in J$ (because $\operatorname{span}\{|g_j\rangle\langle g_k|:j,k\in J\}$
is dense in $(\mathcal B^1(\mathcal H),\|\cdot\|_1)$, cf.~Coro.~\ref{coro_block_approx_nonsep} in the appendix). Hence for some $x,y\in\mathcal Z$
\begin{align*}
(\lambda_j^*\lambda_k)^{-1}\langle g_j\otimes x, \mathrm{C}_\lambda(\Phi_1) (g_k\otimes y)\rangle&=
\langle x,\Phi_1(|g_j\rangle\langle g_k|)y\rangle\neq\langle x,\Phi_2(|g_j\rangle\langle g_k|)y\rangle\\
&=(\lambda_j^*\lambda_k)^{-1}\langle g_j\otimes x,\mathrm{C}_\lambda(\Phi_2)  (g_k\otimes y)\rangle\,.
\end{align*}
Thus the assumption $\lambda_j,\lambda_k\neq 0$ implies $\mathrm{C}_\lambda(\Phi_1)\neq\mathrm{C}_\lambda(\Phi_2)$ as desired.

(ii): ``$\Leftarrow$'': Let $X\in\mathcal B^1(\mathcal H\otimes\mathcal Z)$. Because $\lambda_j\neq 0$ for all $j\in J$ and because $\dim\mathcal H<\infty$ 
one may define
a unique (bounded, because finite-dimensional domain) linear map $\Phi_X:\mathcal B^1(\mathcal H)=\mathbb C^{|J|\times|J|}\to\mathcal B^1(\mathcal Z)$ via the relation
$
\Phi_X(|g_j\rangle\langle g_k|)=(\lambda_j^*\lambda_k)^{-1}{\rm tr}_{\mathcal H}((|g_k\rangle\langle g_j|\otimes{\bf1}_{\mathcal Z})X)
$
for all $j,k\in J$ (also recall footnote~\ref{footnote_part_trace}).
A straightforward computation then shows
$\mathsf C_{\lambda,G}(\Phi_X)=X$.

``$\Rightarrow$'':
We argue by contraposition, so we have to take care of two cases:
\begin{itemize}
\item[1.] If there exists $j\in J$ such that $\lambda_j=0$, then $\mathsf C_{\lambda,G}^{-1}(|g_j\rangle\langle g_j|\otimes Z)=\emptyset$ for arbitrary but fixed $Z\in\mathcal B^1(\mathcal H)\setminus\{0\}$ (which shows that $\mathsf C_{\lambda,G}^{-1}$ is not surjective):
Assume to the contrary that there exists $\Phi\in\mathcal B(\mathcal B^1(\mathcal H),\mathcal B^1(\mathcal Z))$ such that $\mathsf C_{\lambda,G}(\Phi)=|g_j\rangle\langle g_j|\otimes Z$.
Given any $x,y\in\mathcal Z$ such that $\langle x,Z y\rangle\neq 0$---which exist because $Z\neq 0$---we compute
\begin{align*}
0\neq\langle x,Z y\rangle&=\big\langle  g_j\otimes x, (|g_j\rangle\langle g_j|\otimes Z)( g_j\otimes y )\big\rangle\\
&=\big\langle  g_j\otimes x, \mathsf C_{\lambda,G}(\Phi)( g_j\otimes y )\big\rangle=|\lambda_j|^2\langle x, \Phi(  |g_j\rangle\langle g_j|  ) y\rangle\,,
\end{align*}
contradicting $\lambda_j= 0$.
\item[2.] 
Assume $\dim{\mathcal H}=\infty$ and
w.l.o.g.~$\mathbb N\subseteq J$.
Given any
$z\in\mathcal Z\setminus\{0\}$
we claim that $Y:=(\sum_{j\in\mathbb N}\frac1{j^2}|g_j\rangle\langle g_j|)\otimes|z\rangle\langle z|\in\mathcal B^1(\mathcal H\otimes\mathcal Z)$ has empty pre-image under $\mathsf C_{\lambda,G}$,
regardless of the chosen $G,\lambda$.
Assume to the contrary that there exists $\Phi\in\mathcal B(\mathcal B^1(\mathcal H),\mathcal B^1(\mathcal Z))$ such that $\mathsf C_{\lambda,G}(\Phi)=Y$.
This for all $j\in\mathbb N$ implies
\begin{align*}
\|z\|^2j^{-2}&=\langle g_j\otimes z,Y(g_j\otimes z)\rangle\\
&=\langle g_j\otimes z, \mathsf C_{\lambda,G}(\Phi) (g_j\otimes z)\rangle=|\lambda_j|^2 \langle z,  \Phi(|g_j\rangle\langle g_j|)  z\rangle 
\end{align*}
so in particular $\lambda_j\neq 0$ for all $j\in\mathbb N$.
Together with boundedness of $\Phi$ this yields
$
\infty>\|\Phi\|_{1\to 1}\|z\|^2\geq\sup_{j\in\mathbb N}| \langle z,  \Phi(|g_j\rangle\langle g_j|) z\rangle|=\sup_{j\in\mathbb N}\tfrac{\|z\|^2}{j^2|\lambda_j|^2}
$,
i.e.~there exists $C>0$ such that $\frac{1}{j^2|\lambda_j|^2}\leq C$ for all $j\in\mathbb N$.
Therefore
$
\infty=C^{-1/2}\sum_{j\in\mathbb N}\frac1j\leq \sum_{j\in\mathbb N}|\lambda_j|\leq \|\lambda\|_1
$,
a contradiction.
\end{itemize}
The second case in the proof of (ii) also proves the additional statement: $Y$ is positive semi-definite, but its pre-image under $\mathsf C_{\lambda,G}$ would \textit{formally} read $X\mapsto{\rm tr}(\Lambda X)|z\rangle\langle z|$
where $\Lambda:=\sum_{j\in\mathbb N}(j|\lambda_j|)^{-2}|g_j\rangle\langle g_j|$.
However, $\lambda\in\ell^1(J,\mathbb C)$ forces $\Lambda$ to be unbounded meaning the formal map $X\mapsto{\rm tr}(\Lambda X)|z\rangle\langle z|$ is not well defined if and only if ${\rm dim}(\mathcal H)=\infty$.
\end{proof}
\noindent 
Note that lack of surjectivity does \textit{not} come from our choice to go from $\lambda\in\ell^2(J,\mathbb C)$ to $\ell^1(J,\mathbb C)$.
This can be seen by adjusting the argument from the proof of Proposition~\ref{prop_choi_infdim} to $Y=\sum_j|\lambda_j|^2|g_j\rangle\langle g_j|\otimes Z\in\mathcal B^2(\mathcal H\otimes\mathcal Z)$ for some $Z\in\mathcal B^2(\mathcal Z)\setminus\mathcal B^1(\mathcal H)$ and any $\lambda\in\ell^2(J,\mathbb C)$: The idea is that the pre-image of $Y$ under $\mathsf C_{\lambda,G}$
would be $X\mapsto{\rm tr}(X)Z$ which is not in the domain of $\mathsf C_{\lambda,G}$ as $Z$ is not trace class.
Even worse: as mentioned before, the failure of the channel-state duality also persists when
restricting $\mathsf C_{\lambda,G}$ to a map between the completely bounded maps and the trace class on $\mathcal H\otimes\mathcal Z$ \cite[Thm.~3.3]{HKS24}.

\section{(Super)operators on Schatten Classes and Their Operator-Sum Forms}\label{sec_prelim_2}

The key feature of the Choi formalism is that it allows to explicitly construct so-called \textit{Kraus operators} of any completely positive linear map, as first shown by Choi \cite{Choi75}{}.
More precisely, given 
any $\Phi\in\mathsf{CP}(\mathbb C^n,\mathbb C^k)$
the term ``Kraus operators'' refers to any finite family $\{V_j\}_{j\in J}\subset \mathbb C^{k\times n}$ such that $\Phi=\sum_{j\in J}V_j(\cdot)V_j^*$, cf.~\cite[Ch.~4.2]{Heinosaari12}.
Interestingly, this characterization is well known to carry over to infinite dimensions; however, there one has to be careful about how to interpret the---possibly uncountably infinite---sum $\sum_{j\in J}V_j(\cdot)V_j^*$.
This is where common topologies on $\mathcal B(\mathcal X,\mathcal Y)$ (with $\mathcal X,\mathcal Y$ normed spaces) weaker than the norm topology come into play, more precisely the
\textit{strong operator topology}---which is the topology induced by the seminorms $\{T\mapsto \|Tx\|\}_{x\in \mathcal X}$---and
the \textit{weak operator topology}---which is induced by the seminorms $\{T\mapsto |\varphi(Tx)|\}_{x\in \mathcal X,\varphi\in \mathcal Y^*}$ (where $\mathcal Y^*$ denotes the topological dual of $\mathcal Y$). If $\mathcal Y$ is a Hilbert space the seminorms inducing the weak operator topology reduce to $|\langle y,Tx\rangle|$. For more details we refer, e.g., to \cite[Ch.~VI.1]{Dunford58} or  \cite[Prop.~2.1.20]{vE_PhD_2020}.

The first technicality regarding the Kraus form---sometimes also called operator-sum form---in infinite dimensions is that for maps $\Phi\in\mathsf{CP}(\mathcal H,\mathcal Z)$ which are additionally trace-preserving the Kraus operators satisfy $\sum_{j\in J}V_j^*V_j={\bf1}_{\mathcal H}$.
Because we allow for non-separable spaces we choose to be explicit about how such sums are to be understood.
The statement here---which goes back to Vigier \cite{Vigier46} and which can be found in
various versions in \cite[Ch.~1.6]{Davies76}, \cite[Lemma~1.7.4]{Sak71}, \cite[Ch.~8, Thm.~3.3]{Kato80}, and \cite[Appendix~II]{Dix81}---is that norm-bounded increasing nets of self-adjoint operators converge automatically in various topologies.

\begin{lemma}\label{lemma_krausopconv}
Let complex Hilbert spaces $\mathcal H,\mathcal Z$ and
$\{V_j\}_{j\in J}\subset\mathcal B(\mathcal H,\mathcal Z)$ be given.
Assume that $\{\sum_{j\in F}V_j^*V_j\}_{F\subseteq J\text{ finite}}$ is uniformly bounded, i.e.~there exists $C> 0$ such that 
$\|\sum_{j\in F}V_j^*V_j\|_\infty\leq C$ for
all $F\subseteq J$ finite.
Then $\{\sum_{j\in F}V_j^*V_j\}_{F\subseteq J\text{ finite}}$ admits a supremum $X\in\mathcal B(\mathcal H)$, i.e.~
$\sum_{j\in F}V_j^*V_j\leq X$
for all $F\subseteq J$ finite, and if some $Y\in\mathcal B(\mathcal H)$ satisfies $\sum_{j\in F}V_j^*V_j\leq Y$ for all $F\subseteq J$ finite,
then $X\leq Y$.
Moreover, $\{\sum_{j\in F}V_j^*V_j\}_{F\subseteq J\text{ finite}}$ converges to $X$ in the strong operator topology as well as the ultraweak topology.
In particular, $X\geq 0$.
\end{lemma}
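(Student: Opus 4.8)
The plan is to treat the net of finite partial sums $S_F:=\sum_{j\in F}V_j^*V_j$, indexed by the finite subsets $F\subseteq J$ directed by inclusion, as a monotone increasing net and to extract its limit first in the weak operator topology and then bootstrap to the stronger topologies. First I would record that the net is increasing: for $F\subseteq F'$ one has $S_{F'}-S_F=\sum_{j\in F'\setminus F}V_j^*V_j\geq 0$, and each $S_F$ is positive with $0\leq S_F\leq C\,{\bf 1}_{\mathcal H}$ by hypothesis. Hence for each fixed $x\in\mathcal H$ the net of reals $\{\langle x,S_Fx\rangle\}_F$ is increasing and bounded above by $C\|x\|^2$, so it converges to its supremum.

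Next I would construct the candidate operator $X$. By the polarization identity the scalar convergence of the quadratic forms upgrades to convergence of the sesquilinear forms $\{\langle x,S_Fy\rangle\}_F$ for all $x,y\in\mathcal H$; write $b(x,y)$ for the limit. Since $|b(x,y)|\leq C\|x\|\|y\|$, the Riesz representation theorem gives a unique $X\in\mathcal B(\mathcal H)$ with $\langle x,Xy\rangle=b(x,y)$ and $\|X\|_\infty\leq C$. By construction $X$ is self-adjoint with $0\leq X\leq C\,{\bf 1}_{\mathcal H}$ (in particular $X\geq 0$), and $S_F\to X$ in the weak operator topology. The supremum property is then immediate: from $\langle x,S_Fx\rangle\leq\sup_{F'}\langle x,S_{F'}x\rangle=\langle x,Xx\rangle$ we get $S_F\leq X$ for every $F$, and if $Y\in\mathcal B(\mathcal H)$ satisfies $S_F\leq Y$ for all $F$, then $\langle x,Xx\rangle=\sup_F\langle x,S_Fx\rangle\leq\langle x,Yx\rangle$, i.e.\ $X\leq Y$.

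Finally I would strengthen the convergence. Setting $T_F:=X-S_F$ one has $0\leq T_F\leq X\leq C\,{\bf 1}_{\mathcal H}$, hence $T_F^2\leq C\,T_F$, and therefore $\|T_Fx\|^2=\langle x,T_F^2x\rangle\leq C\langle x,T_Fx\rangle\to 0$ by the weak convergence just established; this is strong-operator convergence. For the ultraweak topology, fix $A\in\mathcal B^1(\mathcal H)$ and use its Schmidt decomposition to write $A=\sum_n|x_n\rangle\langle y_n|$ with $\sum_n\|x_n\|\|y_n\|<\infty$. Then ${\rm tr}(T_FA)=\sum_n\langle y_n,T_Fx_n\rangle$, each summand tends to $0$, and $|\langle y_n,T_Fx_n\rangle|\leq C\|x_n\|\|y_n\|$ provides a summable dominating bound, so the dominated-convergence theorem for series yields ${\rm tr}(T_FA)\to 0$, i.e.\ $S_F\to X$ ultraweakly.

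I expect the only genuine subtlety here to be bookkeeping rather than depth: one must keep the net indexing (finite subsets ordered by inclusion) instead of sequences throughout, make sure the polarization step is invoked over $\mathbb C$, and—for the ultraweak claim—pass from the already-known weak convergence through the Schmidt expansion of a trace-class operator with an explicit dominating function. None of these steps is hard, but omitting the domination argument is the natural place a gap could creep in.
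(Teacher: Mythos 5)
Your argument is correct, but it takes a different route from the paper: the paper's proof is essentially a citation, reducing the uniform bound to $\sum_{j\in F}V_j^*V_j\leq C\cdot{\bf 1}$ and then invoking Dixmier's result on bounded increasing nets of self-adjoint operators for the existence of the supremum and strong convergence, and Sakai's Lemma~1.7.4 for ultraweak convergence. You instead reprove the underlying Vigier-type theorem from scratch: monotone convergence of the quadratic forms, polarization plus Riesz representation to build $X$ with weak-operator convergence and the least-upper-bound property, the standard estimate $\|T_Fx\|^2=\langle x,T_F^2x\rangle\leq C\langle x,T_Fx\rangle$ (using $0\leq T_F\leq C{\bf 1}$, so $T_F^2\leq CT_F$) to upgrade to strong convergence, and a Schmidt-decomposition/domination argument to pass from weak-operator to ultraweak convergence on the uniformly bounded net. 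All steps check out, including the points you flag yourself (polarization over $\mathbb C$, working with nets indexed by finite subsets, and the dominating bound $|\langle y_n,T_Fx_n\rangle|\leq C\|x_n\|\|y_n\|$, which is valid since $0\leq T_F\leq C{\bf 1}$ gives $\|T_F\|_\infty\leq C$). What your approach buys is self-containedness—no appeal to Dixmier or Sakai, and your last step is in effect a proof that the weak-operator and ultraweak topologies agree on bounded sets—at the cost of length; the paper's approach buys brevity by outsourcing exactly those facts to the literature.
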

\begin{proof}
Let any $F\subseteq J$ finite be given.
Because $\sum_{j\in F}V_j^*V_j$ is self-adjoint, $\|\sum_{j\in F}V_j^*V_j\|_\infty\leq C$ is equivalent to $\sum_{j\in F}V_j^*V_j\leq C\cdot{\bf 1}$.
With this, \cite[Appendix~II]{Dix81} guarantees the existence of the supremum $X$, and the proof of said result shows that $X$ is precisely the limit of $\{\sum_{j\in F}V_j^*V_j\}_{F\subseteq J\text{ finite}}$ in the strong operator topology (as $\{\tfrac1C\sum_{j\in F}V_j^*V_j\}_{F\subseteq J\text{ finite}}\subset N$, in Dixmier's notation).
Finally, $X\geq 0$ is a direct consequence of the supremum property, and ultraweak convergence is due to \cite[Lemma~1.7.4]{Sak71}.
\end{proof}
\noindent In what follows, for uniformly bounded nets $\{\sum_{j\in F}V_j^*V_j\}_{F\subseteq J\text{ finite}}$ we denote the strong limit
by $\sum_{j\in J}V_j^*V_j$.
This condition of uniform boundedness is the key to establishing in which sense the Kraus form converges.
\begin{lemma}\label{lemma_krausmap_def}
Let
any $\{V_j\}_{j\in J}\subset\mathcal B(\mathcal H,\mathcal Z)$ be given such that
$\{\sum_{j\in F}V_j^*V_j\}_{F\subseteq J\text{ finite}}$ is uniformly bounded.
Then 
the following statements hold.
\begin{itemize}
\item[(i)] For all $B\in\mathcal B(\mathcal Z)$, $\{\sum_{j\in F}V_j^*BV_j\}_{F\subseteq J\text{ finite}}$ converges strongly and ultraweakly.
The linear map $\Phi_V^*:\mathcal B(\mathcal Z)\to\mathcal B(\mathcal H)$, $B\mapsto \sum_{j\in J}V_jBV_j^*$ is completely positive and bounded with norm $\|\Phi_V^*\|_{\infty\to\infty}=\| \sum_{j\in J}V_j^*V_j \|_\infty$.
\item[(ii)] For all $A\in\mathcal B^1(\mathcal H)$,
$\{\sum_{j\in F}V_jAV_j^*\}_{F\subseteq J\text{ finite}}$ converges in trace norm.
The linear map $\Phi_V:\mathcal B^1(\mathcal H)\to\mathcal B^1(\mathcal Z)$, $A\mapsto \sum_{j\in J}V_jAV_j^*$ is completely positive and bounded with norm $\|\Phi_V\|_{1\to 1}=\| \sum_{j\in J}V_j^*V_j \|_\infty$.
\end{itemize}
\end{lemma}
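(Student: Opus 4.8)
The plan is to reduce both parts to the monotone-net convergence of Lemma~\ref{lemma_krausopconv}, applied not to $\{V_j\}_{j\in J}$ directly but to suitably reweighted families, and then to bootstrap from positive inputs to arbitrary ones by splitting a bounded (resp.\ trace-class) operator into at most four positive pieces. Throughout I would write $X:=\sum_{j\in J}V_j^*V_j\in\mathcal B(\mathcal H)$ for the supremum supplied by Lemma~\ref{lemma_krausopconv}, so that $0\leq\sum_{j\in F}V_j^*V_j\leq X\leq\|X\|_\infty{\bf1}_{\mathcal H}$ for every finite $F\subseteq J$, and recall that $\sum_{j\in F}V_j^*V_j$ converges to $X$ both strongly and ultraweakly.

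For part (i), fix $B\in\mathcal B(\mathcal Z)$ and first assume $B\geq 0$. Setting $W_j:=B^{1/2}V_j\in\mathcal B(\mathcal H,\mathcal Z)$ one has $W_j^*W_j=V_j^*BV_j$ and $\sum_{j\in F}W_j^*W_j\leq\|B\|_\infty X$ for all finite $F$, so Lemma~\ref{lemma_krausopconv} applied to $\{W_j\}_{j\in J}$ yields strong and ultraweak convergence of $\{\sum_{j\in F}V_j^*BV_j\}_F$; for general $B$ one splits $B$ into four positive operators and uses that the strong and the ultraweak topologies are linear. This defines $\Phi_V^*(B):=\sum_{j\in J}V_j^*BV_j$ (the strong limit), and linearity is immediate. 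Each partial sum $B\mapsto\sum_{j\in F}V_j^*BV_j$ is completely positive, being a finite sum of conjugations $B\mapsto V_j^*BV_j$ whose $n$-fold amplifications ${\rm id}_n\otimes(\cdot)$ are again conjugations (by ${\bf1}_n\otimes V_j$); since the positive cone is closed in the strong operator topology and the amplified partial sums converge entrywise, hence in the strong operator topology on $\mathcal B(\mathbb C^n\otimes\mathcal H)$, the map $\Phi_V^*$ inherits complete positivity. For boundedness and the norm: when $B\geq 0$ one has $0\leq\Phi_V^*(B)\leq\|B\|_\infty\Phi_V^*({\bf1}_{\mathcal Z})=\|B\|_\infty X$, hence $\|\Phi_V^*(B)\|_\infty\leq\|B\|_\infty\|X\|_\infty$; and since a positive linear map between unital C${}^*$-algebras attains its norm at the identity, $\|\Phi_V^*\|_{\infty\to\infty}=\|\Phi_V^*({\bf1}_{\mathcal Z})\|_\infty=\|X\|_\infty$.

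For part (ii), fix $A\in\mathcal B^1(\mathcal H)$ and first assume $A\geq 0$. By Lemma~\ref{lemma_schatten_comp}~(iv) each $V_jAV_j^*$ is positive and trace class, $\{\sum_{j\in F}V_jAV_j^*\}_F$ is increasing, and ${\rm tr}\big(\sum_{j\in F}V_jAV_j^*\big)={\rm tr}\big(A\sum_{j\in F}V_j^*V_j\big)\to{\rm tr}(AX)\leq\|X\|_\infty\|A\|_1$, where the limit uses ultraweak convergence of $\sum_{j\in F}V_j^*V_j$. Thus the real net $\{{\rm tr}(\sum_{j\in F}V_jAV_j^*)\}_F$ is increasing and bounded, hence convergent, and since $\|\sum_{j\in G}V_jAV_j^*-\sum_{j\in F}V_jAV_j^*\|_1={\rm tr}(\sum_{j\in G\setminus F}V_jAV_j^*)$ for $F\subseteq G$, the net $\{\sum_{j\in F}V_jAV_j^*\}_F$ is $\|\cdot\|_1$-Cauchy, hence $\|\cdot\|_1$-convergent by completeness of $\mathcal B^1(\mathcal Z)$. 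Splitting a general $A$ into four positive trace-class pieces gives trace-norm convergence in all cases, defining $\Phi_V(A):=\sum_{j\in J}V_jAV_j^*$; linearity is clear and complete positivity follows as in part (i), with the strong operator topology replaced by the trace norm. Finally, the crude estimate $\|\Phi_V(A)\|_1\leq\sum_i\|\Phi_V(A_i)\|_1\leq 2\|X\|_\infty\|A\|_1$ shows $\Phi_V\in\mathcal B(\mathcal B^1(\mathcal H),\mathcal B^1(\mathcal Z))$; a short computation using trace-norm convergence of $\{\sum_{j\in F}V_jAV_j^*\}_F$ and ultraweak convergence of $\{\sum_{j\in F}V_j^*BV_j\}_F$ identifies $\Phi_V^*$ from part (i) as the dual of $\Phi_V$ in the sense of Section~\ref{sec_prelim}, whence $\|\Phi_V\|_{1\to1}=\|\Phi_V^*\|_{\infty\to\infty}=\|X\|_\infty$.

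The main obstacle is not a single deep step but the careful handling of topologies: one must keep track that genuine monotone-net convergence is available only for positive inputs (strongly and ultraweakly on $\mathcal B(\mathcal H)$, in trace norm on $\mathcal B^1(\mathcal Z)$), that passing to arbitrary inputs loses a factor in the naive norm bound, and that the sharp constant $\|X\|_\infty$ is recovered only afterwards via the duality $\|\Phi_V\|_{1\to1}=\|(\Phi_V)^*\|_{\infty\to\infty}$ together with positive maps on unital C${}^*$-algebras being normed at the identity. A secondary point requiring care is the closedness of the positive cone under these net limits and its compatibility with the amplifications ${\rm id}_n\otimes(\cdot)$, which is exactly what upgrades positivity of the finite partial sums to complete positivity of the limits $\Phi_V$ and $\Phi_V^*$.
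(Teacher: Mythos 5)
Your proposal is correct, and its skeleton matches the paper's: reduce to the monotone-net Lemma~\ref{lemma_krausopconv} by rewriting $V_j^*BV_j=(\sqrt{B}V_j)^*(\sqrt{B}V_j)$ for $B\geq 0$, pass to general inputs via the decomposition into four positive (trace-class) operators, get $\|\Phi_V^*\|_{\infty\to\infty}=\|\Phi_V^*({\bf1})\|_\infty$ from the fact that positive maps attain their norm at the identity (Russo--Dye, as in the paper), and obtain $\|\Phi_V\|_{1\to1}=\|X\|_\infty$ by identifying $\Phi_V^*$ as the dual of $\Phi_V$. Where you genuinely depart from the paper is the trace-norm convergence for $A\geq 0$: the paper outsources this to the appendix (Lemma~\ref{lemma_strongconv_kraus_app}), which runs a hands-on $\varepsilon$-argument splitting the Schmidt decomposition of $A$ and invoking the summability criterion of Ringrose, whereas you observe that the partial sums form an increasing net of positive trace-class operators whose traces ${\rm tr}(A\sum_{j\in F}V_j^*V_j)$ are bounded by ${\rm tr}(AX)$, so the trace net converges, and since the trace norm of a positive increment equals its trace, the net is Cauchy in $\|\cdot\|_1$. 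This is shorter and more self-contained (it even yields the appendix lemma's trace formula ${\rm tr}(\Phi_V(A))={\rm tr}(XA)$ for free), at the cost of nothing beyond completeness of $\mathcal B^1$ for Cauchy nets. Two further minor divergences: you prove complete positivity of $\Phi_V$ directly from closedness of the positive cone under entrywise trace-norm limits of the amplified partial sums, where the paper infers it from complete positivity of the dual $\Phi_V^*$; and you establish boundedness of $\Phi_V$ by the explicit (factor-2-lossy) estimate from the four-piece splitting before recovering the sharp constant by duality, where the paper instead cites the general fact that positive maps on the trace class are automatically bounded. Both routes are sound; yours would allow dropping the appendix lemma, while the paper's keeps the positive-input convergence as a freestanding statement. (Incidentally, you silently and correctly read $\Phi_V^*$ as $B\mapsto\sum_j V_j^*BV_j$, consistent with the convergence claim and the duality, rather than the formula as printed in the lemma statement.)
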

\begin{proof}
(i): 
If $B$ is self-adjoint, then $V_j^*BV_j\leq \|B\|_\infty V_j^*V_j$ for all $j\in J$:
using Cauchy-Schwarz one for all $x\in\mathcal H$ has
\begin{align*}
\langle x,(\|B\|_\infty V_j^*V_j-V_j^*BV_j)x\rangle
&\geq \|B\|_\infty\|V_jx\|^2-\|V_jx\|\|BV_jx\|\\
&\geq \|B\|_\infty\|V_jx\|^2-\|B\|_\infty\|V_jx\|^2=0\,.
\end{align*}
Thus, because $\{\sum_{j\in F}V_j^*V_j\}_{F\subseteq J\text{ finite}}$ is assumed to be uniformly bounded
the same is true for $\{\sum_{j\in F}V_j^*BV_j\}_{F\subseteq J\text{ finite}}=\{\sum_{j\in F}(\sqrt BV_j)^*(\sqrt BV_j)\}_{F\subseteq J\text{ finite}}$ and all $B\in\mathcal B(\mathcal Z)$, $B\geq 0$.
Hence Lemma~\ref{lemma_krausopconv} establishes strong
and ultraweak convergence to a bounded, positive semi-definite operator $\Phi_V^*(B)$.
The general case follows from the well-known fact that every 
bounded operator can be written as the linear combination of four positive semi-definite 
operators \cite[Coro.~4.2.4]{Kadison83}\footnote{
While the cited result deals with bounded operators, from the proof it is obvious that if the operator to be decomposed is trace class, then the
decomposing operators are trace class, as well.\label{footnote_4pos_traceclass}
}.
This defines a positive linear map $\Phi_V^*:\mathcal B(\mathcal Z)\to\mathcal B(\mathcal H)$, which by the Russo-Dye theorem \cite[Coro.~2.9]{Paulsen03}
is bounded with $\|\Phi_V^*\|_{\infty\to\infty}=\|\Phi_V^*({\bf1})\|_\infty=\|\sum_{j\in J}V_j^*V_j\|_\infty$.
Repeating this procedure for $\{\sum_{j\in F}({\bf1}_n\otimes V_j)^*B({\bf1}_n\otimes V_j)\}_{F\subseteq J\text{ finite}}$ where $B\in\mathcal B(\mathbb C^n\otimes\mathcal Z)$, $n\in\mathbb N$ are arbitrary even proves complete positivity of $\Phi_V^*$.

(ii): 
As before, one first shows summability for $A\geq 0$; given this step is a bit more technical we outsourced it to the appendix
as Lemma~\ref{lemma_strongconv_kraus_app}.
The general case follows, again, from the decomposition into four positive semi-definite operators (cf.~footnote~\ref{footnote_4pos_traceclass})
which establishes a positive---hence bounded, cf.~\cite[Lemma~2.3]{LiDu15} or \cite[Lemma~2.3.6]{vE_PhD_2020}---linear operator $\Phi_V$.
Using ultraweak convergence, we see that the map $\Phi_V^*$ from (i) is precisely the dual of $\Phi_V$ because
$
{\rm tr}(B\Phi_V(A))=\lim_F{\rm tr}(\sum_{j\in F}BV_j^*AV_j)=\lim_F{\rm tr}(\sum_{j\in F}V_jBV_j^*A)={\rm tr}(\Phi_V^*(B)A)
$
for all $A\in\mathcal B^1(\mathcal H)$, $B\in\mathcal B(\mathcal Z)$.
Hence $\Phi_V$ is completely positive because $\Phi_V^*$ is, and
$\|\Phi_V\|_{1\to 1}=\|\Phi_V^*\|_{\infty\to\infty}=\|\sum_{j\in J}V_j^*V_j\|_\infty$.
\end{proof}

\noindent With this we can now state
how complete positivity and the Kraus form are related in general, cf.~\cite[Ch.~9.2]{Davies76}, \cite[Thm.~1.4]{LiDu15}, or \cite[Prop.~2.3.10 ff.]{vE_PhD_2020}---which will be another key ingredient in the proof of our main result:
For every $\Phi\in\mathsf{CP}(\mathcal H,\mathcal Z)$ (resp.~$\Phi\in\mathcal L(\mathcal B(\mathcal H),\mathcal B(\mathcal Z))$ completely positive and ultraweakly continuous)
there exist $\{V_j\}_{j\in J}\subset\mathcal B(\mathcal H,\mathcal Z)$ such that the set $\{\sum_{j\in F}V_j^*V_j\}_{F\subseteq J\text{ finite}}$ is uniformly bounded, and one has $\Phi(A)=\sum_{j\in J}V_jAV_j^*$ for all $A\in\mathcal B^1(\mathcal H)$ (resp.~$\Phi(B)=\sum_{j\in J}V_j^*BV_j$ for all $B\in\mathcal B(\mathcal Z)$) where the respective sums converge as described in Lemma~\ref{lemma_krausmap_def}.
Moreover, if $\mathcal H,\mathcal Z$ are both separable, then one can choose the index set $J$ to be countable.
We remark that the Kraus form has also been extended to factors on separable Hilbert spaces \cite[Thm.~3.1]{HKS24}.\medskip

The final thing we have to do before coming to our main result is to go one level higher as we need to make sense of the trace of superoperators $\Phi(B^*(\cdot)B)$, respectively establish criteria under which such a trace exists.
Because $\mathcal B^2(\mathcal H)$ is a Hilbert space, the trace class $\mathcal B^1(\mathcal B^2(\mathcal H))$ on such spaces is well defined.
The following lemma is about when maps $\Phi(B^*(\cdot)B)$ are themselves trace class:
\begin{lemma}\label{lemma_weighted_Phi_welldef}
The following statements hold.
\begin{itemize}
\item[(i)] For all $p>0$ and all $X,Y\in\mathcal B^p(\mathcal H)$ one has $X^*(\cdot)Y\in\mathcal B^p(\mathcal B^2(\mathcal H))$ with $\|X^*(\cdot)Y\|_p=\|X\|_p\|Y\|_p$.
\item[(ii)] For all $\Phi\in\mathcal B(\mathcal B^1(\mathcal H))$, $B\in\mathcal B^{2}(\mathcal H)$
one has
$\Phi(B^*(\cdot)B)\in\mathcal B^2(\mathcal B^2(\mathcal H))$.
\item[(iii)] For all $\Phi\in\mathcal B(\mathcal B^1(\mathcal H))$, $B\in\mathcal B^{1}(\mathcal H)$
one has
$\Phi(B^*(\cdot)B)\in\mathcal B^1(\mathcal B^2(\mathcal H))$.
\item[(iv)] For all $K_1,K_2\in\mathcal B(\mathcal H)$, $B\in\mathcal B^{1}(\mathcal H)$ the map $X\mapsto K_1B^*XBK_2$ is in $\mathcal B^1(\mathcal B^2(\mathcal H))$, and its trace is given by ${\rm tr}(K_1B^*){\rm tr}(BK_2)$.
\end{itemize}
\end{lemma}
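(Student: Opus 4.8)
The strategy is to prove the four statements in order, using each to bootstrap the next. For (i), the key observation is that if $X=\sum_{j\in N}s_j(X)|f_j\rangle\langle e_j|$ and $Y=\sum_{k\in M}s_k(Y)|h_k\rangle\langle d_k|$ are Schmidt decompositions, then the superoperator $X^*(\cdot)Y$ acts on the Hilbert-Schmidt space $\mathcal B^2(\mathcal H)$ and one can compute its Schmidt decomposition explicitly. Indeed, $X^*(\cdot)Y=\sum_{j,k}s_j(X)s_k(Y)\,|e_j\rangle\langle f_j|(\cdot)|h_k\rangle\langle d_k|$, and since $\{|e_j\rangle\langle d_k|\}_{j,k}$ and (with appropriate completions) the ``input'' rank-one maps $A\mapsto\langle f_j,A h_k\rangle$ form orthonormal systems in $\mathcal B^2(\mathcal H)$ and its dual, the singular values of $X^*(\cdot)Y$ are exactly the products $\{s_j(X)s_k(Y)\}_{j,k}$. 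Hence $\|X^*(\cdot)Y\|_p^p=\sum_{j,k}(s_j(X)s_k(Y))^p=\|X\|_p^p\|Y\|_p^p$, which is (i). I would double-check the orthonormality claims carefully --- this is where one must be a little careful with the identification of $\mathcal B^2(\mathcal H)$ with its dual and with the fact that a rank-one superoperator $|e\rangle\langle f|(\cdot)|h\rangle\langle d|$ sends $\mathcal B^2\to\mathcal B^2$ with Hilbert-Schmidt norm $\|e\|\|f\|\|h\|\|d\|$.

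For (ii) and (iii): write $\Phi(B^*(\cdot)B)=\Phi\circ(B^*(\cdot)B)$ as a composition of the bounded superoperator $\Phi:\mathcal B^1(\mathcal H)\to\mathcal B^1(\mathcal H)$ with the superoperator $B^*(\cdot)B$. But here one must be slightly more careful about the target Schatten index at the intermediate step. The clean route is to factor differently: using Lemma~\ref{lemma_schatten_comp}~(ii), write $B=CD$ with $C,D\in\mathcal B^4(\mathcal H)$ when $B\in\mathcal B^2(\mathcal H)$ (resp. $C,D\in\mathcal B^2(\mathcal H)$ when $B\in\mathcal B^1(\mathcal H)$), so that $B^*(\cdot)B=D^*\big(C^*(\cdot)C\big)D$. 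Then $B^*XB$ has, for $X\in\mathcal B^2(\mathcal H)$, the right Schatten class by Lemma~\ref{lemma_schatten_comp}; more directly, one can observe that $X\mapsto B^*XB$ maps $\mathcal B^2(\mathcal H)$ boundedly into $\mathcal B^1(\mathcal H)$ when $B\in\mathcal B^2(\mathcal H)$ (since $\|B^*XB\|_1\le\|B\|_2\|X\|_\infty\|B\|_2\le\|B\|_2^2\|X\|_2$), and then $\Phi$ maps $\mathcal B^1\to\mathcal B^1\subseteq\mathcal B^2$. To get the Schatten class of the \emph{composition} as an operator on $\mathcal B^2(\mathcal H)$, apply part (i): $B^*(\cdot)B\in\mathcal B^2(\mathcal B^2(\mathcal H))$ when $B\in\mathcal B^2(\mathcal H)$ and $\in\mathcal B^1(\mathcal B^2(\mathcal H))$ when $B\in\mathcal B^1(\mathcal H)$ (via (i) with $X=Y=B$, noting $\|B^*(\cdot)B\|_p=\|B\|_p^2$). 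Now $\Phi$, viewed as a map $\mathcal B^2(\mathcal H)\to\mathcal B^2(\mathcal H)$ --- which it is, boundedly, since $\mathcal B^1\hookrightarrow\mathcal B^2$ and $\Phi(\mathcal B^1)\subseteq\mathcal B^1\subseteq\mathcal B^2$, though one should note $\Phi$ restricted to $\mathcal B^2$ need only be densely defined; here it is everywhere defined because $\Phi$ is defined on all of $\mathcal B^1\supseteq\mathcal B^2$ --- is bounded on $\mathcal B^2(\mathcal H)$, and composing a bounded operator with a Schatten-$p$ operator stays in Schatten-$p$ by Lemma~\ref{lemma_schatten_comp}~(iv). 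This gives (ii) and (iii). The one subtlety I want to flag as the main obstacle: one must verify that $\Phi$ really is bounded as a map $\mathcal B^2(\mathcal H)\to\mathcal B^2(\mathcal H)$ (not merely $\mathcal B^1\to\mathcal B^1$), or else reorganize the composition so that $\Phi$ is only ever applied after the Schatten-$p$ input-weighting has already landed us in $\mathcal B^1$; the cleanest fix is to write the composition as $X\mapsto \Phi(B^*XB)$ and factor $B=CD$ so that $C^*(\cdot)C\in\mathcal B^2(\mathcal B^2)$ (resp. $\in\mathcal B^2(\mathcal B^2)$) lands in $\mathcal B^1(\mathcal H)$, and then $\Phi(\cdot)$ and the remaining $D$-conjugation are bounded on the relevant spaces --- giving the Schatten class of the whole composite from a single application of part (i) plus Lemma~\ref{lemma_schatten_comp}~(iv).

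For (iv): the map $X\mapsto K_1B^*XBK_2$ equals $(BK_2)^{*\,\dagger}\cdots$ --- more precisely, set $X_1:=B^*K_1^*\in\mathcal B^1(\mathcal H)$ (a product of trace class and bounded, hence trace class by Lemma~\ref{lemma_schatten_comp}~(iv)) wait, more directly: the superoperator is $A\mapsto (K_1B^*)A(BK_2)$, which is of the form $U^*(\cdot)W$ with $U:=(K_1B^*)^*=BK_1^*\in\mathcal B^1(\mathcal H)$ and $W:=BK_2\in\mathcal B^1(\mathcal H)$, using Lemma~\ref{lemma_schatten_comp}~(iv) that a bounded operator times a trace-class operator is trace class. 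By part (i) with $p=1$, $U^*(\cdot)W=(K_1B^*)(\cdot)(BK_2)\in\mathcal B^1(\mathcal B^2(\mathcal H))$ with trace-norm $\|K_1B^*\|_1\|BK_2\|_1$. For the trace value, compute it on the orthonormal basis $\{|f_k\rangle\langle g_j|\}_{j\in J,k\in K}$ of $\mathcal B^2(\mathcal H)$ (recalled in Section~\ref{sec_prelim}): the trace of the superoperator $\mathcal T:=(K_1B^*)(\cdot)(BK_2)$ is
\begin{align*}
{\rm tr}(\mathcal T)&=\sum_{j,k}\big\langle\, |f_k\rangle\langle g_j|\,,\ (K_1B^*)|f_k\rangle\langle g_j|(BK_2)\,\big\rangle_{\sf HS}\\
&=\sum_{j,k}{\rm tr}\big(|g_j\rangle\langle f_k|\,(K_1B^*)\,|f_k\rangle\langle g_j|\,(BK_2)\big)\\
&=\sum_{j,k}\langle f_k,K_1B^*f_k\rangle\,\langle g_j,BK_2 g_j\rangle=\Big(\sum_k\langle f_k,K_1B^*f_k\rangle\Big)\Big(\sum_j\langle g_j,BK_2g_j\rangle\Big),
\end{align*}
which is ${\rm tr}(K_1B^*)\,{\rm tr}(BK_2)$. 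One then uses ${\rm tr}(K_1B^*)={\rm tr}(B^*K_1)$ by cyclicity to match the statement if needed; note also ${\rm tr}(K_1B^*)$ and ${\rm tr}(BK_2)$ are finite since $B\in\mathcal B^1(\mathcal H)$ and $K_1,K_2$ bounded. Interchanging the sums/absolute convergence is justified because $\mathcal T$ is already known to be trace class from part (i), so its trace series converges absolutely in any orthonormal basis. The main thing to be careful about throughout (iv) is the bookkeeping of which operator goes on the left versus the right of $(\cdot)$, since $X^*(\cdot)Y$ in part (i) was stated with a star on the left factor --- I would simply match $X^*=K_1B^*$, i.e. $X=BK_1^*$, $Y=BK_2$, and apply (i) verbatim.
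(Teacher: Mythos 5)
Parts (i) and (iv) of your proposal are correct and essentially the paper's own argument. The gap is in (ii)--(iii): your pivotal claim that $\Phi\in\mathcal B(\mathcal B^1(\mathcal H))$ ``is bounded as a map $\mathcal B^2(\mathcal H)\to\mathcal B^2(\mathcal H)$, everywhere defined because $\Phi$ is defined on all of $\mathcal B^1\supseteq\mathcal B^2$'' is false: the inclusion runs the other way, $\mathcal B^1(\mathcal H)\subseteq\mathcal B^2(\mathcal H)$ (Lemma~\ref{lemma_schatten_comp}~(iii)), so $\Phi$ is only densely defined on $\mathcal B^2(\mathcal H)$, and it need not be $\|\cdot\|_2$-bounded there. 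Concretely, $\Phi={\rm tr}(\cdot)\,Z$ with $0\neq Z\in\mathcal B^1(\mathcal H)$ is bounded on $\mathcal B^1(\mathcal H)$, yet for $X_n:=\tfrac1n\sum_{j=1}^n|e_j\rangle\langle e_j|$ one has $\|X_n\|_2=n^{-1/2}\to0$ while $\Phi(X_n)=Z$ for all $n$; hence there is no bounded extension to $\mathcal B^2(\mathcal H)$, and the step ``bounded operator composed with Schatten-$p$ stays Schatten-$p$'' cannot be applied with $\Phi$ as the bounded factor. You do flag this obstacle, but your proposed fix does not close it: writing $\Phi(B^*XB)=\bigl[\Phi(D^*(\cdot)D)\bigr]\circ\bigl[C^*(\cdot)C\bigr]$ and treating the outer factor merely as a bounded operator on $\mathcal B^2(\mathcal H)$ (which it is, since the $D$-conjugation lands in $\mathcal B^1(\mathcal H)$) yields, via part (i) and Lemma~\ref{lemma_schatten_comp}~(iv), only $\Phi(B^*(\cdot)B)\in\mathcal B^4(\mathcal B^2(\mathcal H))$ when $B\in\mathcal B^2(\mathcal H)$ (since $C\in\mathcal B^4$ gives $C^*(\cdot)C\in\mathcal B^4(\mathcal B^2(\mathcal H))$, not $\mathcal B^2$) and only $\mathcal B^2(\mathcal B^2(\mathcal H))$ when $B\in\mathcal B^1(\mathcal H)$ --- in each case one Schatten exponent short of the claim, and in (iii) trace-class membership is exactly what the paper needs in order to speak of ${\rm tr}(\Phi(B^*(\cdot)B))$ at all.

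The missing idea is a direct proof of (ii) that does not route the Schatten weight through $\Phi$: complete the left Schmidt vectors of $B=\sum_{j}s_j(B)|f_j\rangle\langle g_j|$ to an orthonormal basis of $\mathcal H$, observe that $B^*|f_k\rangle\langle f_j|B=s_j(B)s_k(B)|g_k\rangle\langle g_j|$ (and $=0$ on the completion vectors), hence $\|\Phi(B^*|f_k\rangle\langle f_j|B)\|_2\le\|\Phi(B^*|f_k\rangle\langle f_j|B)\|_1\le\|\Phi\|_{1\to1}\,s_j(B)s_k(B)$, and conclude $\Phi(B^*(\cdot)B)\in\mathcal B^2(\mathcal B^2(\mathcal H))$ from the orthonormal-basis criterion for Hilbert--Schmidt operators (boundedness on $\mathcal B^2(\mathcal H)$ your own estimate $\|\Phi(B^*XB)\|_1\le\|\Phi\|_{1\to1}\|B\|_2^2\|X\|_2$ already provides). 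With (ii) in hand, (iii) follows exactly by your factorization, but with both factors carrying Schatten weight: for $B=B_1B_2$ with $B_1,B_2\in\mathcal B^2(\mathcal H)$ one has $\Phi(B^*(\cdot)B)=\Phi(B_2^*(\cdot)B_2)\circ(B_1^*(\cdot)B_1)$, a composition of two Hilbert--Schmidt superoperators (by (ii) and (i)), which is trace class by Lemma~\ref{lemma_schatten_comp}~(i). This is the paper's route; the point is that the outer factor must itself be Hilbert--Schmidt, not just bounded.
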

\begin{proof}
(i):
Given any Schmidt decompositions $X=\sum_{j\in N_1}s_j(X)|f_j\rangle\langle g_j|$, $Y=\sum_{j\in N_2}s_j(Y)|y_j\rangle\langle z_j|$,
we write $|\,  |g_j\rangle\langle z_k|\,\rangle\langle \,|f_j\rangle\langle y_k| \,|$ as short-hand for the map $B\mapsto\langle\,|f_j\rangle\langle y_k|,B\rangle_\mathrm{HS}|g_j\rangle\langle z_k|=\langle f_j|B|y_k\rangle |g_j\rangle\langle z_k|$ on $\mathcal B^2(\mathcal H)$.
This yields
$
X^*(\cdot)Y
=\sum_{(j,k)\in N_1\times N_2}s_j(X)s_k(Y)|\,  |g_j\rangle\langle z_k|\,\rangle\langle \,|f_j\rangle\langle y_k| \,|
$
which, in particular, is a Schmidt decomposition 
because $\{\, |g_j\rangle\langle z_k| \,\}_{(j,k)\in N_1\times N_2}$, $\{\, |f_j\rangle\langle y_k| \,\}_{(j,k)\in N_1\times N_2}$ are orthonormal sets in $\mathcal B^2(\mathcal H)$.
Thus the singular values of $X^*(\cdot)Y$ are $\{ s_j(X)s_k(Y) \}_{(j,k)\in N_1\times N_2}$, meaning
$
\|X^*(\cdot)Y\|_p
=(\sum_{j\in N_1}(s_j(X))^p)^{1/p}(\sum_{k\in N_2}(s_k(Y))^p)^{1/p}=\|X\|_p\|Y\|_p
$.

(ii): First we show that $\Phi(B^*(\cdot)B)$ is bounded on $\mathcal B^2(\mathcal H)$: Using Lemma~\ref{lemma_schatten_comp}
we upper bound $\|\Phi(B^*(\cdot)B)\|_{2\to 2}$
by
$
\sup_{X\in\mathcal B^2(\mathcal H),\|X\|_2\leq 1}\|\Phi(B^*XB)\|_1
\leq\|\Phi\|_{1\to 1}\sup_{X\in\mathcal B^2(\mathcal H),\|X\|_2\leq 1}\|B^*\|\|X\|_2\|B\|_2=\|\Phi\|_{1\to 1}\|B\|\|B\|_2<\infty$.
Next consider any Schmidt decomposition $B=\sum_{j\in N}s_j(B)|f_j\rangle\langle g_j|$. Completing the orthonormal system
$\{f_j\}_{j\in N}$ to an orthonormal basis 
$\{f_j\}_{j\in J}$ of $\mathcal H$
gives rise to an orthonormal basis
$\{|f_k\rangle\langle f_j|\}_{j,k\in J}$ of $\mathcal B^2(\mathcal H)$.
By Theorem~2.4.3 in \cite{Ringrose71}{}, $\Phi(B^*(\cdot)B)\in\mathcal B^2(\mathcal B^2(\mathcal H))$ is now equivalent to $\sum_{j,k\in J}\|\Phi(B^*|f_k\rangle\langle f_j|B)\|_2^2<\infty$.
But the latter sum evaluates to
\begin{align*}
\sum_{j,k\in J}\|\Phi(B^*|f_k\rangle\langle f_j|B)\|_2^2&=\sum_{j,k\in N}(s_j(B))^2(s_k(B))^2
\|\Phi(|g_k\rangle\langle g_j|)\|_2^2\\
&\leq\|\Phi\|_{1\to 1}^2\Big(\sum_{j\in N}(s_j(B))^2\Big)^2=\|\Phi\|_{1\to 1}^2\|B\|_2^4<\infty
\end{align*}
due to $\|\cdot\|_2\leq\|\cdot\|_1$. 
(iii): By Lemma~\ref{lemma_schatten_comp}~(ii) there exist $B_1,B_2\in\mathcal B^2(\mathcal H)$ such that $B=B_1B_2$,
so we decompose $\Phi(B^*(\cdot)B)=\Phi((B_1B_2)^*(\cdot)B_1B_2)=\Phi(B_2^*(\cdot)B_2)\circ(B_1^*(\cdot)B_1)$.
By (i), $B_1^*(\cdot)B_1\in\mathcal B^2(\mathcal B^2(\mathcal H))$, and the same holds for $\Phi(B_2^*(\cdot)B_2)$ by (ii).
In particular, Lemma~\ref{lemma_schatten_comp}~(i) implies that
their composition is in $\mathcal B^1(\mathcal B^2(\mathcal H))$.

(iv): 
As $BK_1^*,BK_2\in\mathcal B^1(\mathcal H)$ we know $K_1B^*(\cdot)BK_2\in\mathcal B^1(\mathcal B^2(\mathcal H))$ by (i).
Given any orthonormal basis $\{g_j\}_{j\in J}$ of $\mathcal H$ (so $\{|g_k\rangle\langle g_j|\}_{j,k\in J}$ is an orthonormal basis of $\mathcal B^2(\mathcal H)$) we can evaluate the trace in question explicitly:
\begin{align*}
{\rm tr}(K_1B^*(\cdot)BK_2)&=\sum_{j,k\in J}\big\langle\,|g_k\rangle\langle g_j|  ,  K_1B^*|g_k\rangle\langle g_j|BK_2 \big\rangle_{\sf HS}\\
&=\sum_{j,k\in J}\langle g_k,K_1B^*g_k\rangle\langle g_j,BK_2g_j\rangle={\rm tr}(K_1B^*){\rm tr}(BK_2) 
\end{align*}
\end{proof}

A fact that will become important later is that given $\Phi$ completely positive, the trace of $\Phi(B^*(\cdot)B)$ can be evaluated explicitly using the Kraus operators of $\Phi$.
\begin{lemma}\label{lemma_v_ell2}
Let 
$B\in\mathcal B^{1}(\mathcal H)$
as well as $\Phi\in\mathsf{CP}(\mathcal H)$ be given. If $\{V_j\}_{j\in J}\subset\mathcal B(\mathcal H)
$ is any set of Kraus operators of $\Phi$, then $v:=\{{\rm tr}(B^*V_j)\}_{j\in J}\in\ell^2(J,\mathbb C)$.
Moreover, the trace of $\Phi(B^*(\cdot)B)$ is equal to $\|v\|_2^2$.
\end{lemma}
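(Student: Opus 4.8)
The plan is to evaluate ${\rm tr}(\Phi(B^*(\cdot)B))$ by expanding $\Phi(B^*(\cdot)B)$ along an orthonormal basis of the Hilbert space $\mathcal B^2(\mathcal H)$, inserting the Kraus form $\Phi=\sum_{j\in J}V_j(\cdot)V_j^*$, and then interchanging the two resulting sums; the number $\|v\|_2^2$ will emerge, and since ${\rm tr}(\Phi(B^*(\cdot)B))$ exists as a finite number by Lemma~\ref{lemma_weighted_Phi_welldef}~(iii) this will in particular force $v\in\ell^2(J,\mathbb C)$. Concretely, fix an orthonormal basis $\{g_a\}_{a\in A}$ of $\mathcal H$, so that $\{|g_b\rangle\langle g_a|\}_{(a,b)\in A\times A}$ is an orthonormal basis of $\mathcal B^2(\mathcal H)$. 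Using $\langle|g_b\rangle\langle g_a|,Y\rangle_{\sf HS}=\langle g_b,Yg_a\rangle$ together with the trace-norm convergence of the Kraus sum (Lemma~\ref{lemma_krausmap_def}~(ii), applied to $B^*|g_b\rangle\langle g_a|B\in\mathcal B^1(\mathcal H)$, which is legitimate since $B$ is trace class) one arrives, after a short computation, at
\[
{\rm tr}(\Phi(B^*(\cdot)B))=\sum_{(a,b)\in A\times A}\ \sum_{j\in J}\langle g_b,V_jB^*g_b\rangle\,\langle g_a,BV_j^*g_a\rangle\,.
\]
For each fixed $j$ the inner summation over $(a,b)$ would give $({\rm tr}(V_jB^*))\,({\rm tr}(BV_j^*))=v_j\overline{v_j}=|v_j|^2$, so the whole argument reduces to justifying the exchange of the sums over $(a,b)$ and over $j$.

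This exchange is legitimate as soon as the family $\{\langle g_b,V_jB^*g_b\rangle\langle g_a,BV_j^*g_a\rangle\}_{(a,b,j)\in A\times A\times J}$ is (absolutely) summable, for then Fubini holds for unordered sums. Summing moduli over $(a,b)$ first and using the standard bound that the moduli of the diagonal entries of a trace-class operator sum to at most its trace norm, one is left with the task of showing $\sum_{j\in J}\|BV_j^*\|_1^2<\infty$ (where $\|V_jB^*\|_1=\|BV_j^*\|_1$). For this I would factorise $B=B_1B_2$ with $B_1,B_2\in\mathcal B^2(\mathcal H)$ by Lemma~\ref{lemma_schatten_comp}~(ii) and then use Hölder (Lemma~\ref{lemma_schatten_comp}~(i)) to get $\|BV_j^*\|_1\leq\|B_1\|_2\,\|B_2V_j^*\|_2$, hence $\|BV_j^*\|_1^2\leq\|B_1\|_2^2\,{\rm tr}(V_j^*V_j\,B_2^*B_2)$. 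Finally, for every finite $F\subseteq J$ one has
\[
\sum_{j\in F}{\rm tr}(V_j^*V_jB_2^*B_2)={\rm tr}\Big(\big(\textstyle\sum_{j\in F}V_j^*V_j\big)B_2^*B_2\Big)\leq{\rm tr}(XB_2^*B_2)\leq\|X\|_\infty\|B_2\|_2^2\,,
\]
where $X:=\sum_{j\in J}V_j^*V_j\in\mathcal B(\mathcal H)$ is the supremum supplied by Lemma~\ref{lemma_krausopconv} and we used $0\leq\sum_{j\in F}V_j^*V_j\leq X$ together with positivity and monotonicity of $S\mapsto{\rm tr}(SB_2^*B_2)$ on positive semi-definite operators. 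Thus $\sum_{j\in J}\|BV_j^*\|_1^2\leq\|B_1\|_2^2\|B_2\|_2^2\|X\|_\infty<\infty$, and summability is established.

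Granting this, Fubini permits carrying out the inner sum first: for fixed $j$, $\sum_{b\in A}\langle g_b,V_jB^*g_b\rangle={\rm tr}(V_jB^*)={\rm tr}(B^*V_j)=v_j$ (both $V_jB^*$ and $BV_j^*$ being trace class since $B$ is) and, analogously, $\sum_{a\in A}\langle g_a,BV_j^*g_a\rangle={\rm tr}(BV_j^*)=\overline{{\rm tr}(B^*V_j)}=\overline{v_j}$. Therefore ${\rm tr}(\Phi(B^*(\cdot)B))=\sum_{j\in J}v_j\overline{v_j}=\sum_{j\in J}|v_j|^2$; as the left-hand side is finite this shows simultaneously that $v\in\ell^2(J,\mathbb C)$ and that ${\rm tr}(\Phi(B^*(\cdot)B))=\|v\|_2^2$. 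The step I expect to be the main obstacle is precisely the summability estimate above: a crude bound such as $\|BV_j^*\|_1\leq\|B\|_1\|V_j\|_\infty$ is useless because it is not summable, so one really has to peel off two Hilbert--Schmidt factors of $B$ in order to expose the products $V_j^*V_j$, which can then be dominated by the uniform bound on the Kraus operators via Lemma~\ref{lemma_krausopconv}. One may also avoid the explicit Fubini by truncating $\Phi$ to $\Phi_F=\sum_{j\in F}V_j(\cdot)V_j^*$, computing ${\rm tr}(\Phi_F(B^*(\cdot)B))=\sum_{j\in F}|v_j|^2$ through Lemma~\ref{lemma_weighted_Phi_welldef}~(iv), and showing $\Phi_F(B^*(\cdot)B)\to\Phi(B^*(\cdot)B)$ in $\mathcal B^1(\mathcal B^2(\mathcal H))$ by combining the Hilbert--Schmidt estimate from the proof of Lemma~\ref{lemma_weighted_Phi_welldef}~(ii) with dominated convergence; this route needs essentially the same input.
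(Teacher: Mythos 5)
Your proposal is correct and follows essentially the same route as the paper's proof: expand the trace of $\Phi(B^*(\cdot)B)$ along an orthonormal basis of $\mathcal B^2(\mathcal H)$, insert the Kraus sum, and justify the interchange of summations by factoring $B=B_1B_2$ into Hilbert--Schmidt pieces so that the resulting bound can be dominated by $\|B_1\|_2^2\|B_2\|_2^2\,\|\sum_{j}V_j^*V_j\|_\infty$ via Lemma~\ref{lemma_krausopconv}. The only (cosmetic) differences are that the paper first establishes $v\in\ell^2(J,\mathbb C)$ directly through a Cauchy--Schwarz estimate in $\mathcal B^2(\mathcal H)$ and works with the Schmidt-basis of $B$, whereas you obtain $v\in\ell^2$ as a byproduct of the finiteness of the trace and control the Fubini step through the bound $\sum_j\|BV_j^*\|_1^2<\infty$.
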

\begin{proof}
As done before we decompose $B$ into Hilbert-Schmidt operators $B_1:=\sum_{j\in N}\sqrt{s_j}|f_j\rangle\langle g_j|$, $B_2:=\sum_{j\in N}\sqrt{s_j}|g_j\rangle\langle g_j|$, that is, $B=B_1B_2$.
We compute
$
\sum_{j\in F}|{\rm tr}(B^*V_j)|^2=\sum_{j\in F}|\langle B_1 , V_jB_2^* \rangle_{\sf HS}|^2
\leq\|B_1\|_2^2\sum_{j\in F}{\rm tr}(B_2^*B_2V_j^*V_j)
$
where in the second step we used the Cauchy-Schwarz inequality on $\mathcal B^2(\mathcal H)$
Now by Lemma~\ref{lemma_krausopconv} $\{\sum_{j\in F}V_j^*V_j\}_{F\subseteq J\text{ finite}}$
converges ultraweakly to its supremum so
\begin{align}
\sum_{j\in F}|{\rm tr}(B^*V_j)|^2&\leq\|B_1\|_2^2\sum_{j\in F}{\rm tr}(B_2^*B_2V_j^*V_j)\leq\|B_1\|_2^2\Big\|B_2^*B_2\sum_{j\in F}V_j^*V_j\Big\|_1\notag\\
&\leq \|B_1\|_2^2\|B_2^*B_2\|_1\Big\|\sum_{j\in F}V_j^*V_j\Big\|_\infty\leq \|B\|_1^2\Big\|\sum_{j\in J}V_j^*V_j\Big\|_\infty.\label{eq:lemma_v_ell2-1}
\end{align}
This implies two things:
1.~$v=\{{\rm tr}(B^*V_j)\}_{j\in J}\in\ell^2(J,\mathbb C)$---as desired---as well as
2.~%
$w:=\{{\rm tr}(B_2V_j^*V_jB_2^*)\}_{j\in J}=\{\|B_2^*V_j\|_2^2\}_{j\in J}\in\ell^1(J,\mathbb C)$.
To connect $\|v\|_2^2$ to the trace of $\Phi(B^*(\cdot)B)$
we complete $\{f_j\}_{j\in N}$ to an orthonormal basis $\{f_a\}_{a\in A}$ of $\mathcal H$ (i.e.~$N\subseteq A$), which lets us evaluate
\begin{align*}
{\rm tr}(\Phi(B^*(\cdot)B))&=\sum_{a,b\in A}\big\langle\,|f_a\rangle\langle f_b|, \Phi\big( B^*|f_a\rangle\langle f_b|B\big)\big\rangle_\mathrm{HS}\\
&=\sum_{a,b\in N}\sum_{j\in J}\sqrt{s_as_b}\langle f_a,V_jB_2^*g_a\rangle\langle g_b,B_2V_j^*f_b\rangle\,.
\end{align*}
Now we want to interchange the order of summation; for this we check that $\{\sqrt{s_as_b}\langle f_a,V_jB_2^*g_a\rangle\langle g_b,B_2V_j^*f_b\rangle\}_{a,b\in N,j\in J}$ is
summable \cite[Lemma~1.2.6]{Ringrose71}.
Using Cauchy-Schwarz on $\ell^2(N,\mathbb C)$ we compute
\begin{align*}
\sum_{j\in J}\sum_{a,b\in N}\big|\sqrt{s_as_b}\langle f_a&,V_jB_2^*g_a\rangle\langle g_b,B_2V_j^*f_b\rangle\big|\\
&=\sum_{j\in J}\Big(
\sum_{a\in N}\sqrt{s_a}|\langle f_a,V_jB_2^*g_a\rangle|\Big)^2\\
&\leq \sum_{j\in J}\Big(\sum_{a\in N}(\sqrt{s_a})^2\Big)  \Big( \sum_{a\in N}|\langle f_a,V_jB_2^*g_a\rangle|^2 \Big)\\
&\leq \Big(\sum_{a\in N}s_a\Big)\sum_{j\in J}\Big( \sum_{a\in A}\|V_jB_2^*g_a\|^2 \Big)\\
&=\|B\|_1\sum_{j\in J}{\rm tr}(B_2V_j^*V_jB_2^*)
=\|B\|_1\|w\|_1
<\infty\,,
\end{align*}
which by \cite[Lemma~1.2.5]{Ringrose71} implies summability, as desired.
Thus we may interchange sums which lets us arrive at
\begin{align*}
{\rm tr}(\Phi(B^*(\cdot)B))&=\sum_{j\in J}\sum_{a,b\in N}\sqrt{s_as_b}\langle f_a,V_jB_2^*g_a\rangle\langle g_b,B_2V_j^*f_b\rangle\\
&=\sum_{j\in J}\sum_{a,b\in J}\langle f_a,V_jB_2^*B_1^*f_a\rangle\langle f_b,B_1B_2V_j^*f_b\rangle\\
&=\sum_{j\in J}
|{\rm tr}(V_j(B_1B_2)^*)|^2
=\sum_{j\in J}|{\rm tr}(B^*V_j)|^2=\|v\|_2^2\,.
\end{align*}
\end{proof}

\section{Unique Decompositions of Generators of Completely Positive Dynamical Semigroups}\label{sec_uniquedecomp}

Finally, we are ready to establish our main result. For this we first single out the subspace of $\mathsf{CP}(\mathcal H)$ for which the weighted trace 
from Lemma~\ref{lemma_weighted_Phi_welldef}~(iii)
vanishes.

\begin{definition}\label{lemma_trace_infdim}
Given $B\in\mathcal B^{1}(\mathcal H)$ where $\mathcal H$ is an arbitrary complex Hilbert space. Then
$\mathsf{CP}_B(\mathcal H):=\{\Phi\in \mathsf{CP}(\mathcal H)\,:\,{\rm tr}(\Phi(B^*(\cdot)B))=0\}$.
\end{definition}
\noindent By Lemma~\ref{lemma_v_ell2} $\Phi\in \mathsf{CP}_B(\mathcal H)$ if and only if there exist Kraus operators $\{V_j\}_{j\in J}$ of $\Phi$ such that ${\rm tr}(B^*V_j)=0$ for all $j\in J$ (equivalently: all sets of Kraus operators satisfy this).
More importantly, this trace can also be recovered using the Choi formalism and vectorization. Let us quickly recap the latter concept:
In finite dimensions, given $B\in\mathbb C^{m\times n}$ and any orthonormal basis
$G:=\{g_j\}_{j=1}^{n}$ of 
$\mathbb C^n$
one defines
$
\vec{G}B:=
\sum_{k=1}^m 
 g_j\otimes Bg_j\in\mathbb C^n\otimes\mathbb C^m\simeq\mathbb C^{mn}
$
\cite[Ch.~4.2 ff.]{HJ2}.
This generalizes to any complex Hilbert spaces $\mathcal H,\mathcal Z$ 
via the map
$
{\rm vec}_{G}:\mathcal B^2(\mathcal H,\mathcal Z)
\to \mathcal H\otimes\mathcal Z
,
X
\mapsto \sum_{j\in J}g_j\otimes Xg_j 
$
(which is well-defined and unitary)
where $G:=\{g_j\}_{j\in J}$ is any orthonormal basis of $\mathcal H$, refer, e.g., to \cite{Gudder20}{}.
Equivalently, given any $X\in\mathcal B^2(\mathcal H)$, $\vec{G}X$ is the unique element of $\mathcal H\otimes\mathcal Z$ such that $\langle g_j\otimes z,\vec{G}X\rangle=\langle z,Xg_j\rangle $ for all $j\in J$, $z\in\mathcal Z$.
\begin{lemma}\label{lemma0_infdim}
Given any orthonormal basis $G:=\{g_j\}_{j\in J}$ of a complex Hilbert space $\mathcal H$ and any $\lambda\in\ell^2(J,\mathbb C)$ for all $\Phi\in\mathcal B(\mathcal B^1(\mathcal H))$ and all $X,Y\in\mathcal B^2(\mathcal H)$ it holds that
$
{\rm tr}\big(\Phi((XB)^*(\cdot)YB)\big)=\langle \vec{G}{X},\mathsf C_{\lambda,G}(\Phi)\vec{G}Y\rangle
$,
where $B:=\sum_{j\in J}\lambda_j|g_j\rangle\langle g_j|$.
\end{lemma}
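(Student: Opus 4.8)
The plan is to prove the identity first for finite truncations of $B$, where every sum is finite and the computation is essentially finite-dimensional, and then to pass to the limit using the summability of the Choi sum from Lemma~\ref{lemma_choi} together with the norm estimates behind Lemma~\ref{lemma_weighted_Phi_welldef}. I would first record that both sides are meaningful. Since $\lambda\in\ell^2(J,\mathbb C)$ one has $B\in\mathcal B^2(\mathcal H)$ with $\|B\|_2=\|\lambda\|_2$, so $XB,YB\in\mathcal B^1(\mathcal H)$ by Lemma~\ref{lemma_schatten_comp}~(i). Using the polarization identity $4A^*(\cdot)C=\sum_{m=0}^3 i^{-m}(A+i^mC)^*(\cdot)(A+i^mC)$ with $A=XB$, $C=YB$ and noting $XB+i^mYB=(X+i^mY)B\in\mathcal B^1(\mathcal H)$, Lemma~\ref{lemma_weighted_Phi_welldef}~(iii) gives $\Phi((XB)^*(\cdot)(YB))\in\mathcal B^1(\mathcal B^2(\mathcal H))$, so its trace exists; the right-hand side exists because $\mathsf C_{\lambda,G}(\Phi)\in\mathcal B^2(\mathcal H\otimes\mathcal H)\subseteq\mathcal B(\mathcal H\otimes\mathcal H)$ by Lemma~\ref{lemma_choi} and $\vec{G}{X},\vec{G}{Y}\in\mathcal H\otimes\mathcal H$.

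Next, for a finite set $F\subseteq J$ put $B_F:=\sum_{j\in F}\lambda_j|g_j\rangle\langle g_j|$ and let $\lambda_F\in\ell^2(J,\mathbb C)$ agree with $\lambda$ on $F$ and vanish elsewhere, so that $\mathsf C_{\lambda_F,G}(\Phi)=\sum_{j,k\in F}\lambda_j^*\lambda_k|g_j\rangle\langle g_k|\otimes\Phi(|g_j\rangle\langle g_k|)$ is a finite sum. From $XB_F=\sum_{j\in F}\lambda_j|Xg_j\rangle\langle g_j|$ one reads off that $\Phi((XB_F)^*(\cdot)(YB_F))$ is the finite-rank superoperator $Z\mapsto\sum_{j,k\in F}\lambda_j^*\lambda_k\langle Xg_j,ZYg_k\rangle\,\Phi(|g_j\rangle\langle g_k|)$ on $\mathcal B^2(\mathcal H)$; computing its trace against the orthonormal basis $\{|g_a\rangle\langle g_b|\}_{a,b\in J}$ of $\mathcal B^2(\mathcal H)$ and collapsing the (finitely many nested) $a$- and $b$-sums by completeness of $\{g_a\}_{a\in J}$ gives ${\rm tr}(\Phi((XB_F)^*(\cdot)(YB_F)))=\sum_{j,k\in F}\lambda_j^*\lambda_k\langle Xg_j,\Phi(|g_j\rangle\langle g_k|)Yg_k\rangle$. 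On the other side, expanding $\vec{G}{X}=\sum_{a\in J}g_a\otimes Xg_a$ and $\vec{G}{Y}=\sum_{b\in J}g_b\otimes Yg_b$, applying $\mathsf C_{\lambda_F,G}(\Phi)$ termwise, and using the defining relation $\langle\vec{G}{X},g_j\otimes w\rangle=\langle Xg_j,w\rangle$ produces the very same double sum for $\langle\vec{G}{X},\mathsf C_{\lambda_F,G}(\Phi)\vec{G}{Y}\rangle$. Thus the claimed identity holds with $B$ replaced by $B_F$, for every finite $F$.

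It remains to let $F$ run through the directed set of finite subsets of $J$. On the right-hand side, $\{F\times F:F\subseteq J\text{ finite}\}$ is cofinal in the finite subsets of $J\times J$, so the summability of the Choi sum established in Lemma~\ref{lemma_choi} forces $\mathsf C_{\lambda_F,G}(\Phi)\to\mathsf C_{\lambda,G}(\Phi)$ in $\mathcal B^2(\mathcal H\otimes\mathcal H)$, hence in operator norm, whence $\langle\vec{G}{X},\mathsf C_{\lambda_F,G}(\Phi)\vec{G}{Y}\rangle\to\langle\vec{G}{X},\mathsf C_{\lambda,G}(\Phi)\vec{G}{Y}\rangle$ by Cauchy--Schwarz together with $\|\vec{G}{X}\|=\|X\|_2$. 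On the left-hand side, $\|B-B_F\|_2\to0$, hence $XB_F\to XB$ and $YB_F\to YB$ in $\mathcal B^1(\mathcal H)$ by Lemma~\ref{lemma_schatten_comp}~(i); combining the estimates in the proof of Lemma~\ref{lemma_weighted_Phi_welldef}~(ii)--(iii) with the polarization identity above yields a bound $\|\Phi(S^*(\cdot)T)\|_1\leq C_\Phi\,\|S\|_1\|T\|_1$ for all $S,T\in\mathcal B^1(\mathcal H)$, so $(S,T)\mapsto{\rm tr}(\Phi(S^*(\cdot)T))$ is jointly continuous on $\mathcal B^1(\mathcal H)\times\mathcal B^1(\mathcal H)$ and ${\rm tr}(\Phi((XB_F)^*(\cdot)(YB_F)))\to{\rm tr}(\Phi((XB)^*(\cdot)(YB)))$. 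Passing to the limit in the truncated identity then proves the lemma. I expect the finite computation — correctly identifying the finite-rank superoperator $\Phi((XB_F)^*(\cdot)(YB_F))$, reading off its trace, and matching it against the vectorization side — to be the delicate (though elementary) part; once the bilinear trace-norm bound for $\Phi(S^*(\cdot)T)$ and the $\mathcal B^2$-summability of $\mathsf C_{\lambda,G}(\Phi)$ are in hand, the two limit passages are routine.
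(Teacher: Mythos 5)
Your proposal is correct, but it follows a genuinely different route from the paper's proof. The paper argues directly, with no truncation or limit: it expands $\langle \vec{G}{X},\mathsf C_{\lambda,G}(\Phi)\vec{G}{Y}\rangle$ into the double sum $\sum_{j,k\in J}\langle g_j,X^*\Phi(B^*|g_j\rangle\langle g_k|B)Yg_k\rangle$, recognizes this as the trace of the composition $\Phi_{X^*,Y}\circ\Phi\circ\Phi_{B^*,B}$ (with $\Phi_{X^*,Y}:=X^*(\cdot)Y$, $\Phi_{B^*,B}:=B^*(\cdot)B$) evaluated in the orthonormal basis $\{|g_k\rangle\langle g_j|\}_{j,k\in J}$ of $\mathcal B^2(\mathcal H)$---trace-classness coming from Lemma~\ref{lemma_weighted_Phi_welldef}~(i)--(ii) and Lemma~\ref{lemma_schatten_comp}~(i)---and then cycles the Hilbert--Schmidt factors under the trace to get ${\rm tr}(\Phi\circ\Phi_{B^*,B}\circ\Phi_{X^*,Y})={\rm tr}(\Phi((XB)^*(\cdot)YB))$. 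You instead prove the identity for the finite truncations $B_F$ and pass to the limit, which needs two extra continuity inputs: convergence $\mathsf C_{\lambda_F,G}(\Phi)\to\mathsf C_{\lambda,G}(\Phi)$ in $\mathcal B^2(\mathcal H\otimes\mathcal H)$ (your cofinality argument correctly extracts this from the summability in Lemma~\ref{lemma_choi}) and a bound $\|\Phi(S^*(\cdot)T)\|_1\leq C_\Phi\|S\|_1\|T\|_1$, which is not stated in Lemma~\ref{lemma_weighted_Phi_welldef} but does follow as you indicate---polarization plus the quantitative estimates inside the proofs of (ii)--(iii) give $\|\Phi(S^*(\cdot)T)\|_1\leq\|\Phi\|_{1\to1}(\|S\|_1+\|T\|_1)^2$, and sesquilinearity with homogeneity upgrades this to a product bound; alternatively, factoring $S=S_1S_2$, $T=T_1T_2$ as in the proof of (iii) yields $\|\Phi(S^*(\cdot)T)\|_1\leq\|\Phi\|_{1\to1}\|S\|_1\|T\|_1$ directly. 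What each approach buys: the paper's computation is shorter and obtains well-definedness and the identity in one stroke from the superoperator trace machinery already set up; yours reduces the core identity to an essentially finite-dimensional calculation, at the cost of the limit bookkeeping and of making the bilinear trace-norm estimate explicit.
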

\begin{proof}
We begin by expanding the right-hand side:
\begin{align*}
\langle& \vec{G}{X},\mathsf C_{\lambda,G}(\Phi)\vec{G}Y\rangle\\
&\ \ =\sum_{p,j,k,q\in J}\lambda_j^*\lambda_k\big\langle  g_p\otimes Xg_p,(  |g_j\rangle\langle g_k|\otimes\Phi(|g_j\rangle\langle g_k|)  )(  g_q\otimes Yg_q  )     \big\rangle\\
&\ \ =\sum_{j,k\in J}\lambda_j^*\lambda_k\big\langle   Xg_j,\Phi(|g_j\rangle\langle g_k|) Yg_k \big\rangle=\sum_{j,k\in J}\big\langle   g_j,X^*\Phi(B^*|g_j\rangle\langle g_k|B) Yg_k \big\rangle\,.
\end{align*}
Defining $\Phi_{X^*,Y}:=X^*(\cdot)Y$, $\Phi_{B^*,B}:=B^*(\cdot)B$---which are both elements of $\mathcal B^2(\mathcal B^2(\mathcal H))$ by Lemma~\ref{lemma_weighted_Phi_welldef}~(i)---the map $\Phi_{X^*,Y}\circ\Phi\circ \Phi_{B^*,B}=X^*\Phi(B^*(\cdot)B)Y$ is in $\mathcal B^1(\mathcal B^2(\mathcal H))$ (Lemma~\ref{lemma_schatten_comp}~(i) and Lemma~\ref{lemma_weighted_Phi_welldef}~(ii)).
Because $\{|g_k\rangle\langle g_j|\}_{j\in J,k\in K}$ is an orthonormal basis
$\sum_{j,k\in J}\langle   g_j,X^*\Phi(B^*|g_j\rangle\langle g_k|B) Yg_k \rangle$ is 
precisely the trace of $\Phi_{X^*,Y}\circ\Phi\circ \Phi_{B^*,B}$.
Hence
$
\langle \vec{G}{X}|\mathsf C_{\lambda,G}(\Phi)|\vec{G}Y\rangle={\rm tr}(\Phi_{X^*,Y}\circ\Phi\circ \Phi_{B^*,B})
={\rm tr}(\Phi\circ \Phi_{B^*,B}\circ \Phi_{X^*,Y})={\rm tr}\big(\Phi((XB)^*(\cdot)YB)\big)$,
as claimed.
\end{proof}

Because the Choi formalism turns completely positive inputs into positive semi-definite operators, the above identity places a restriction on the kernel of the Choi operator in case $\Phi$ is completely positive and has vanishing weighted trace:

\begin{lemma}\label{lemma_vec_kernel}
Let any orthonormal basis $G:=\{g_j\}_{j\in J}$ of a complex Hilbert space $\mathcal H$ and any $\lambda\in\ell^2(J,\mathbb C)$ be given. Then for all $X\in\mathcal B^2(\mathcal H)$ and all $\Phi\in\mathsf{CP}_{XB}(\mathcal H)$---where
$B:=\sum_{j\in J}\lambda_j|g_j\rangle\langle g_j|\in\mathcal B^2(\mathcal H)$---it holds that $\mathsf C_{\lambda,G}(\Phi)\vec{G}X=0$
\end{lemma}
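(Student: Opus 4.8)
The plan is to read the claim off directly from Lemma~\ref{lemma0_infdim} by specialising to $Y=X$, and then to invoke the positive semi-definiteness of the Choi operator. Before doing so one has to check that the hypothesis even makes sense: since $X\in\mathcal B^2(\mathcal H)$ and $B=\sum_{j\in J}\lambda_j|g_j\rangle\langle g_j|\in\mathcal B^2(\mathcal H)$ (the latter because $\lambda\in\ell^2(J,\mathbb C)$), Lemma~\ref{lemma_schatten_comp}~(i) gives $XB\in\mathcal B^1(\mathcal H)$; hence the weighted trace ${\rm tr}(\Phi((XB)^*(\cdot)XB))$ is well defined by Lemma~\ref{lemma_weighted_Phi_welldef}~(iii), and $\mathsf{CP}_{XB}(\mathcal H)$ is a meaningful object in the sense of Definition~\ref{lemma_trace_infdim}.

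With that in place, the first step is to apply Lemma~\ref{lemma0_infdim} with the roles of $X$ and $Y$ both played by the given $X$. This yields
\[
\big\langle \vec{G}{X},\mathsf C_{\lambda,G}(\Phi)\vec{G}{X}\big\rangle={\rm tr}\big(\Phi((XB)^*(\cdot)XB)\big).
\]
By the very definition of $\mathsf{CP}_{XB}(\mathcal H)$ the right-hand side vanishes, so $\langle \vec{G}{X},\mathsf C_{\lambda,G}(\Phi)\vec{G}{X}\rangle=0$.

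The second step is to deduce $\mathsf C_{\lambda,G}(\Phi)\vec{G}{X}=0$ from this. Here I would use that $\mathsf C_{\lambda,G}(\Phi)\in\mathcal B^2(\mathcal H\otimes\mathcal H)$ is positive semi-definite because $\Phi$ is completely positive (Lemma~\ref{lemma_choi}). For any bounded $A\geq 0$ with positive square root $A^{1/2}$ and any vector $v$ one has $\langle v,Av\rangle=\|A^{1/2}v\|^2$, so $\langle v,Av\rangle=0$ forces $A^{1/2}v=0$ and hence $Av=A^{1/2}(A^{1/2}v)=0$. Applying this with $A=\mathsf C_{\lambda,G}(\Phi)$ and $v=\vec{G}{X}$ gives the claim. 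I do not expect a genuine obstacle: the conceptual content is carried entirely by Lemmas~\ref{lemma_choi} and \ref{lemma0_infdim}, and the only point requiring a modicum of care is verifying that $XB$ is trace class, so that the assumption $\Phi\in\mathsf{CP}_{XB}(\mathcal H)$ is non-vacuous and Lemma~\ref{lemma0_infdim} applies verbatim.
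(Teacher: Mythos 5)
Your proposal is correct and follows essentially the same route as the paper's proof: specialise Lemma~\ref{lemma0_infdim} to $Y=X$ to turn the vanishing weighted trace into $\langle \vec{G}{X},\mathsf C_{\lambda,G}(\Phi)\vec{G}{X}\rangle=0$, then use positive semi-definiteness of $\mathsf C_{\lambda,G}(\Phi)$ (Lemma~\ref{lemma_choi}) and the square-root argument to conclude $\mathsf C_{\lambda,G}(\Phi)\vec{G}{X}=0$. Your preliminary check that $XB\in\mathcal B^1(\mathcal H)$ is a harmless addition that the paper leaves implicit.
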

\begin{proof}
Because $\Phi$ is completely positive,
$\mathsf C_{\lambda,G}(\Phi)$
is positive semi-definite by Lemma~\ref{lemma_choi}.
On the other hand, ${\rm tr}(\Phi((XB)^*(\cdot)XB))=0$
by Lemma~\ref{lemma0_infdim} is equivalent to $\langle \vec{G}{X},\mathsf C_{\lambda,G}(\Phi)\vec{G}X\rangle=0$.
Therefore
\begin{align*}
0&=\langle \vec{G}{X},\mathsf C_{\lambda,G}(\Phi)\vec{G}X\rangle\\
&=\Big\langle \sqrt{\mathsf C_{\lambda,G}(\Phi)}\vec{G}{X},\sqrt{\mathsf C_{\lambda,G}(\Phi)}\vec{G}X\Big\rangle=\Big\| \sqrt{\mathsf C_{\lambda,G}(\Phi)}\vec{G}{X}\Big\|^2\,.
\end{align*}
Altogether this shows $\mathsf C_{\lambda,G}(\Phi)\vec{G}{X} =\sqrt{\mathsf C_{\lambda,G}(\Phi)}\sqrt{\mathsf C_{\lambda,G}(\Phi)}\vec{G}{X}=0$.
\end{proof}
It turns out that the kernel constraint from Lemma~\ref{lemma_vec_kernel} is the reason why the $K(\cdot)+(\cdot)K^*$-part of any generator $L\in\mathsf L(\mathsf{CP}(\mathcal H))$ is ``independent'' of the completely positive part, assuming the weighted trace of the latter vanishes.
This is also the point where we have to invoke separability of the underlying Hilbert space:
\begin{proposition}\label{prop_CPB_diff_impossible}
Let $K\in\mathcal B(\mathcal H)$ where $\mathcal H$ is a separable complex Hilbert space, as well as $B\in\mathcal B^{1}(\mathcal H)$ 
with ${\rm tr}(B)\neq 0$ be given.
If 
$K(\cdot)+(\cdot)K^*\in\mathsf{CP}_B(\mathcal H)-\mathsf{CP}_B(\mathcal H)$, then
$K=i\lambda{\bf1}$ for some $\lambda\in\mathbb R$.
\end{proposition}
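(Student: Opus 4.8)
The plan is to feed the assumed decomposition $K(\cdot)+(\cdot)K^*=\Phi_1-\Phi_2$, with $\Phi_1,\Phi_2\in\mathsf{CP}_B(\mathcal H)$, into the Choi formalism and to read off a constraint on $K$ from Lemma~\ref{lemma_vec_kernel}. Writing $\Psi_K:=K(\cdot)+(\cdot)K^*\in\mathcal B(\mathcal B^1(\mathcal H))$, the immediate obstacle is that Lemma~\ref{lemma_vec_kernel} is phrased for $\mathsf{CP}_{XD}(\mathcal H)$ with a \emph{diagonal} $D=\sum_{j\in J}\lambda_j|g_j\rangle\langle g_j|$, $\lambda\in\ell^2(J,\mathbb C)$, whereas our $B$ is an arbitrary trace-class operator. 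So the first and---I expect---most delicate step is to realize $B=XD$ for some $X\in\mathcal B^2(\mathcal H)$ and some such diagonal $D$ in which, moreover, all weights are nonzero. I would do this via the polar decomposition $B=U|B|$ together with the spectral decomposition $|B|=\sum_{j\in J}s_j|g_j\rangle\langle g_j|$ of the positive trace-class operator $|B|$, where $\{g_j\}_{j\in J}$ is an orthonormal basis of $\mathcal H$ and $J$ is countable by separability. Setting $\lambda_j:=\sqrt{s_j}$ whenever $s_j>0$, and choosing $\lambda_j$ to be any nonzero number such that $\{\lambda_j\}_{j:\,s_j=0}\in\ell^2$ otherwise (possible precisely because $J$ is countable), one lets $D:=\sum_{j\in J}\lambda_j|g_j\rangle\langle g_j|\in\mathcal B^2(\mathcal H)$ and $X:=\sum_{j:\,s_j>0}\sqrt{s_j}\,|Ug_j\rangle\langle g_j|$. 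A basiswise check gives $XD=B$, while $\|X\|_2^2\leq\sum_{j\in J}s_j=\|B\|_1<\infty$ yields $X\in\mathcal B^2(\mathcal H)$; the crucial point is that now all $\lambda_j\neq 0$.

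Next I would use $\Phi_1,\Phi_2\in\mathsf{CP}_B(\mathcal H)=\mathsf{CP}_{XD}(\mathcal H)$: Lemma~\ref{lemma_vec_kernel} gives $\mathsf C_{\lambda,G}(\Phi_1)\vec{G}{X}=\mathsf C_{\lambda,G}(\Phi_2)\vec{G}{X}=0$, hence $\mathsf C_{\lambda,G}(\Psi_K)\vec{G}{X}=0$ by linearity. A short computation straight from the definition of $\mathsf C_{\lambda,G}$---using $\Psi_K(|g_j\rangle\langle g_k|)=|Kg_j\rangle\langle g_k|+|g_j\rangle\langle Kg_k|$ and the fact that $\mathsf C_{\lambda,G}({\rm id})=|\Gamma_{\lambda,G}\rangle\langle\Gamma_{\lambda,G}|$---then shows
\[
\mathsf C_{\lambda,G}(\Psi_K)=({\bf 1}\otimes K)|\Gamma_{\lambda,G}\rangle\langle\Gamma_{\lambda,G}|+|\Gamma_{\lambda,G}\rangle\langle\Gamma_{\lambda,G}|({\bf 1}\otimes K^*).
\]
Applying this to $\vec{G}{X}$ and using $\langle\Gamma_{\lambda,G},\vec{G}{X}\rangle=\sum_{j\in J}\lambda_j\langle g_j,Xg_j\rangle={\rm tr}(XD)={\rm tr}(B)\neq 0$, I would conclude that $({\bf 1}\otimes K)\Gamma_{\lambda,G}$ is a scalar multiple of $\Gamma_{\lambda,G}$, say $({\bf 1}\otimes K)\Gamma_{\lambda,G}=\mu\,\Gamma_{\lambda,G}$ for some $\mu\in\mathbb C$. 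Expanding both sides in the orthonormal basis $\{g_a\otimes g_b\}_{a,b\in J}$ turns this into $\lambda_a^*\langle g_b,Kg_a\rangle=\mu\lambda_a^*\delta_{ab}$ for all $a,b\in J$; since every $\lambda_a\neq 0$, this forces $Kg_a=\mu g_a$ for all $a$, i.e.~$K=\mu{\bf 1}$.

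It then remains to check ${\rm Re}(\mu)=0$. For $K=\mu{\bf 1}$ one has $\Psi_K(B^*(\cdot)B)=2\,{\rm Re}(\mu)\,B^*(\cdot)B$, whose trace equals $2\,{\rm Re}(\mu)\,{\rm tr}(B^*){\rm tr}(B)=2\,{\rm Re}(\mu)\,|{\rm tr}(B)|^2$ by Lemma~\ref{lemma_weighted_Phi_welldef}~(iv). On the other hand ${\rm tr}(\Psi_K(B^*(\cdot)B))={\rm tr}(\Phi_1(B^*(\cdot)B))-{\rm tr}(\Phi_2(B^*(\cdot)B))=0$ since $\Phi_1,\Phi_2\in\mathsf{CP}_B(\mathcal H)$ (all three superoperators being trace class by Lemma~\ref{lemma_weighted_Phi_welldef}~(iii)). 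As ${\rm tr}(B)\neq 0$, this gives ${\rm Re}(\mu)=0$, so $\mu=i\lambda$ with $\lambda\in\mathbb R$ and $K=i\lambda{\bf 1}$, as claimed. The hard part is the factorization $B=XD$ of the first step: it is exactly where separability is indispensable, since an $\ell^2$-family indexed by an uncountable set must vanish somewhere, which is the structural reason this route to Theorem~\ref{thm_main} cannot extend to non-separable $\mathcal H$.
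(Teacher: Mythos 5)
Your proposal is correct and follows essentially the same route as the paper: the same factorization of $B$ into a Hilbert--Schmidt factor times a diagonal operator with strictly nonzero $\ell^2$ weights (using countability of the basis), the same application of Lemma~\ref{lemma_vec_kernel} to get $\mathsf C_{\lambda,G}(K(\cdot)+(\cdot)K^*)\vec{G}{X}=0$, and the same final trace argument via Lemma~\ref{lemma_weighted_Phi_welldef}~(iv) to force ${\rm Re}(\mu)=0$. The only difference is cosmetic: you extract $K=\mu{\bf 1}$ by writing $\mathsf C_{\lambda,G}(K(\cdot)+(\cdot)K^*)=({\bf 1}\otimes K)|\Gamma_{\lambda,G}\rangle\langle\Gamma_{\lambda,G}|+|\Gamma_{\lambda,G}\rangle\langle\Gamma_{\lambda,G}|({\bf 1}\otimes K^*)$ and reading off that $\Gamma_{\lambda,G}$ is an eigenvector of ${\bf 1}\otimes K$, whereas the paper derives the same conclusion by computing the matrix elements $\langle g_k\otimes g_j,\cdot\,\rangle$ directly.
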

\begin{proof}
We start with any Schmidt decomposition $B=\sum_{j\in N}s_j(B)|f_j\rangle\langle g_j|$.
If $\{g_j\}_{j\in N}$ is not yet an orthonormal basis 
we can extend it to an orthonormal basis $G:=\{g_j\}_{j\in J}$, $N\subseteq J$ of $\mathcal H$;
in particular $J$---and hence $J\setminus N$---are countable by assumption.
Define
$s_j(B):=0$ for $j\in J\setminus N$ as well as 
$B_1:=\sum_{j\in J}\sqrt{s_j(B)}|f_j\rangle\langle g_j|$, $B_2:=\sum_{j\in J}\lambda_j|g_j\rangle\langle g_j|$
where
$\lambda:J\to(0,\infty)$ is defined via
$\lambda(j):=\sqrt{s_j(B)}$ if $\sqrt{s_j(B)}\neq 0$, and $\lambda(j):=2^{-j}$ otherwise.
Because $J$ is countable,
$\lambda$ is square summable
meaning 
$B_1,B_2\in\mathcal B^2(\mathcal H)$ and
$B=B_1B_2$.
Now the key insight is that if $K(\cdot)+(\cdot)K^*=\Phi_1-\Phi_2$ for some
$\Phi_1,\Phi_2\in \mathsf{CP}_B(\mathcal H)=\mathsf{CP}_{B_1B_2}(\mathcal H)$,
then
\begin{align*}
\mathsf C_{\lambda,G}(K(\cdot)+(\cdot)K^*)\vec{G}{B_1}&=\mathsf C_{\lambda,G}(\Phi_1-\Phi_2)\vec{G}{B_1}\\
&=\mathsf C_{\lambda,G}(\Phi_1)\vec{G}{B_1}-\mathsf C_{\lambda,G}(\Phi_2)\vec{G}{B_1}=0
\end{align*}
by Lemma~\ref{lemma_choi} \& Lemma~\ref{lemma_vec_kernel}.
In particular, for all $j,k\in J$ it holds that
\begin{align*}
0&=\langle g_k\otimes g_j,\mathsf C_{\lambda,G}(K(\cdot)+(\cdot)K^*)\vec{G}{B_1}\rangle\\
&=\sum_{a,b\in J}\lambda_a\lambda_b\big\langle g_k\otimes g_j,\big(|g_a\rangle\langle g_b|\otimes(K|g_a\rangle\langle g_b|+|g_a\rangle\langle g_b|K^*)\big)\vec{G}{B_1}\big\rangle\\
&=\sum_{b\in J}\lambda_k\lambda_b \big(\langle  g_j ,  Kg_k\rangle\langle g_b,B_1g_b  \rangle+
\delta_{jk}\langle g_b,K^*   B_1g_b \rangle\big)\,.
\end{align*}
Now we use that
$\lambda_bB_1g_b=B_1B_2g_b=Bg_b$ for all $b\in J$
to compute
\begin{align}
0&=\sum_{b\in J}\lambda_k \big(\langle  g_j ,  Kg_k\rangle\langle g_b,Bg_b  \rangle+
\delta_{jk}\langle g_b,K^*   Bg_b \rangle\big)\notag\\
&= \lambda_k\langle  g_j ,  Kg_k\rangle \sum_{b\in J}\langle g_b,Bg_b\rangle+ \lambda_k\delta_{jk} \sum_{b\in J}\langle g_b,K^*Bg_b\rangle\notag\\
&=\lambda_k\langle  g_j ,  Kg_k\rangle{\rm tr}(B)+\lambda_k\delta_{jk}{\rm tr}(BK^*)\,.\label{eq:prop_CPB_diff_impossible_1}
\end{align}
Because $\lambda_k\neq 0$ for all $k\in J$ by construction,~\eqref{eq:prop_CPB_diff_impossible_1} for all $j,k\in J$ is equivalent to $0=\langle  g_j ,  Kg_k\rangle{\rm tr}(B)+\delta_{jk}{\rm tr}(BK^*)$.
This lets us distinguish two cases: if $j\neq k$, then this computation shows $0=  \langle  g_j ,  Kg_k\rangle{\rm tr}(B)$; 
but 
${\rm tr}(B)\neq 0$ by assumption, so this even shows $ \langle  g_j ,  Kg_k\rangle=0$ for all $j\neq k$.
Because $\{g_j\}_{j\in J}$ is an orthonormal basis of $\mathcal H$ this means that $K$ is diagonal (w.r.t.~$G$).
Now if $j=k$, then~\eqref{eq:prop_CPB_diff_impossible_1} yields
$
0=  \langle  g_j ,  Kg_j\rangle{\rm tr}(B)+ {\rm tr}(BK^*)
$. Again, 
${\rm tr}(B)\neq 0$ so we find
$
 \langle  g_j ,  Kg_j\rangle=-\frac{{\rm tr}(BK^*)}{{\rm tr}(B)}
$;
in other words there exists $c\in\mathbb C$ such that $ \langle  g_j ,  Kg_j\rangle=c$ for all $j\in J$. 
Altogether, as $\{g_j\}_{j\in J}$ is an orthonormal basis this shows $K=c\cdot{\bf1}$.
All that is left to prove is that $c=i\lambda$ for some $\lambda\in\mathbb R$.
Because ${\rm tr}(\Phi_1(B^*(\cdot)B))={\rm tr}(\Phi_2(B^*(\cdot)B))=0$, the same holds for $K(\cdot)+(\cdot)K^*$.
By Lemma~\ref{lemma_weighted_Phi_welldef}~(iv) this means
$
0={\rm tr}(KB^*){\rm tr}(B)+{\rm tr}(B^*){\rm tr}(BK^*)=2{\rm Re}( {\rm tr}(KB^*){\rm tr}(B) )
$.
Now we insert $K=c\cdot{\bf 1}$ to find
$
0=2{\rm Re}(c| {\rm tr}(B)|^2 )=2| {\rm tr}(B)|^2{\rm Re}(c)
$,
and as ${\rm tr}(B)\neq 0$ this shows ${\rm Re}(c)=0$, i.e.~$c=i\lambda$ for some $\lambda\in\mathbb R$.
\end{proof}

We now come to our main result.
It turns out that the most convenient way for establishing uniqueness of decompositions $L\mapsto (K,\Phi)$ of corresponding generators
is via bijectivity of a certain map (with appropriate domain and co-domain).
This is most easily formulated in terms of the Lie wedge which is a generalization of Lie algebras central to Lie semigroup theory, cf.~also \cite{HHL89}{}:
Given a Banach space $\mathcal X$ and a norm-closed (sub-)semigroup $S$ of $\mathcal B(\mathcal X)$ which contains the identity, the \textit{Lie wedge} of $S$ is defined as $\mathsf L(S):=\{A\in\mathcal B(\mathcal X):e^{tA}\in S\text{ for all }t\geq 0\}$.
As an example, 
the standard forms of generators from the introduction
in
this language 
reads
\begin{align*}
\mathsf L(\mathsf{CPTP}(\mathcal H))&=\{-i[H,\cdot]+\Phi-\tfrac12\{\Phi^*({\bf1}),\cdot\}:H\in i\mathfrak u(\mathcal H),\Phi\in\mathsf{CP}(\mathcal H)\}\\
\mathsf L(\mathsf{CP}(\mathcal H))&=\{K(\cdot)+(\cdot)K^*+\Phi:K\in \mathcal B(\mathcal H),\Phi\in\mathsf{CP}(\mathcal H)\}
\end{align*}
where
$\mathfrak u(\mathcal H)$ is the unitary algebra, i.e.~the collection of all skew-adjoint bounded operators on $\mathcal H$.

\begin{theorem}\label{thm_main}
Let a
separable complex Hilbert space $\mathcal H$
as well as $B\in\mathcal B^{1}(\mathcal H)$ 
with ${\rm Re}({\rm tr}(B))\neq 0$ be given. Then the following map is bijective:
\begin{align*}
\Xi_B:\{K\in\mathcal B(\mathcal H):{\rm Im}({\rm tr}(B^*K))=0\}\times\mathsf{CP}_B(\mathcal H)&\to \mathsf L(\mathsf{CP}(\mathcal H))\\
(K,\Phi)&\mapsto K(\cdot)+(\cdot)K^*+\Phi
\end{align*}
\end{theorem}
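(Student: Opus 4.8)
Since the displayed formula for $\mathsf L(\mathsf{CP}(\mathcal H))$ shows that $\Xi_B$ takes values in $\mathsf L(\mathsf{CP}(\mathcal H))$, and since ${\rm tr}(\Phi(B^*(\cdot)B))$ is a well-defined scalar for every $\Phi\in\mathsf{CP}(\mathcal H)$ by Lemma~\ref{lemma_weighted_Phi_welldef}~(iii), the map $\Xi_B$ is well-defined. Injectivity is then a quick corollary of Proposition~\ref{prop_CPB_diff_impossible}: if $\Xi_B(K_1,\Phi_1)=\Xi_B(K_2,\Phi_2)$, then $(K_1-K_2)(\cdot)+(\cdot)(K_1-K_2)^*=\Phi_2-\Phi_1\in\mathsf{CP}_B(\mathcal H)-\mathsf{CP}_B(\mathcal H)$, and since ${\rm Re}({\rm tr}(B))\neq 0$ implies ${\rm tr}(B)\neq 0$, that proposition forces $K_1-K_2=i\lambda{\bf1}$ for some $\lambda\in\mathbb R$; substituting back gives $\Phi_2-\Phi_1=i\lambda(\cdot)-i\lambda(\cdot)=0$, whence $\Phi_1=\Phi_2$, and taking imaginary parts in ${\rm tr}(B^*K_1)-{\rm tr}(B^*K_2)=i\lambda\,{\rm tr}(B^*)$ together with ${\rm Im}({\rm tr}(B^*K_1))={\rm Im}({\rm tr}(B^*K_2))=0$ yields $\lambda\,{\rm Re}({\rm tr}(B))=0$, so $\lambda=0$ and $K_1=K_2$.

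Surjectivity is the substantial part, and my plan is a Kraus-operator shift. Given $L\in\mathsf L(\mathsf{CP}(\mathcal H))$, I would start from any decomposition $L=K_0(\cdot)+(\cdot)K_0^*+\Phi_0$ with $K_0\in\mathcal B(\mathcal H)$, $\Phi_0\in\mathsf{CP}(\mathcal H)$ (available by the displayed description of $\mathsf L(\mathsf{CP}(\mathcal H))$), fix Kraus operators $\{V_j\}_{j\in J}$ of $\Phi_0$, and replace them by $V_j':=V_j-c_j{\bf1}$ with $c_j:={\rm tr}(B^*V_j)/\overline{{\rm tr}(B)}$, so that ${\rm tr}(B^*V_j')=0$ for all $j$. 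By Lemma~\ref{lemma_v_ell2} the family $c:=\{c_j\}_{j\in J}$ lies in $\ell^2(J,\mathbb C)$ with $\|c\|_2^2=|{\rm tr}(B)|^{-2}\,{\rm tr}(\Phi_0(B^*(\cdot)B))<\infty$, and a Cauchy-Schwarz estimate against the uniformly bounded net $\{\sum_{j\in F}V_j^*V_j\}_{F\subseteq J\text{ finite}}$ shows both that $D:=\sum_{j\in J}\overline{c_j}V_j$ converges in the strong operator topology to an element of $\mathcal B(\mathcal H)$ and that $\{\sum_{j\in F}(V_j')^*V_j'\}_{F\subseteq J\text{ finite}}$ is uniformly bounded, so that $\Phi:=\sum_{j\in J}V_j'(\cdot)(V_j')^*\in\mathsf{CP}(\mathcal H)$ by Lemma~\ref{lemma_krausmap_def}. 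Expanding the square gives the key identity $\Phi=\Phi_0-D(\cdot)-(\cdot)D^*+\|c\|_2^2(\cdot)$. The decisive observation is that the leftover term $\|c\|_2^2(\cdot)$ is itself of anti-commutator type: putting $\tilde D:=D-\tfrac12\|c\|_2^2{\bf1}+i\beta{\bf1}$ for a free real parameter $\beta$ one gets $\tilde D(\cdot)+(\cdot)\tilde D^*=D(\cdot)+(\cdot)D^*-\|c\|_2^2(\cdot)$, hence $L=(K_0+\tilde D)(\cdot)+(\cdot)(K_0+\tilde D)^*+\Phi$ with $\Phi\in\mathsf{CP}(\mathcal H)$. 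Finally, ${\rm tr}(B^*V_j')=0$ for all $j$ gives ${\rm tr}(\Phi(B^*(\cdot)B))=0$ via Lemma~\ref{lemma_v_ell2}, i.e.\ $\Phi\in\mathsf{CP}_B(\mathcal H)$, while $\beta\mapsto{\rm Im}({\rm tr}(B^*(K_0+\tilde D)))$ is affine with slope ${\rm Re}({\rm tr}(B))\neq 0$, so there is a unique $\beta$ for which $(K_0+\tilde D,\Phi)$ lies in the domain of $\Xi_B$ and maps to $L$.

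I expect the crux to be the surjectivity construction and, within it, the tension between keeping the completely positive part completely positive and having to subtract from $\Phi_0$ something of the rigid form $\tilde D(\cdot)+(\cdot)\tilde D^*$. Shifting the Kraus operators by scalars is the natural device to annihilate the weighted trace, but it contaminates $\Phi$ by the term $\|c\|_2^2(\cdot)$; the resolution is the elementary but essential fact that this term equals $\mu(\cdot)+(\cdot)\bar\mu$ with $\mu=-\tfrac12\|c\|_2^2\in\mathbb R$ and can therefore be absorbed into the $K$-part, leaving a genuine completely positive map behind. After that, the single remaining real constraint ${\rm Im}({\rm tr}(B^*K))=0$ is matched by the single free parameter ${\rm Im}(\tilde D)=\beta$, and ${\rm Re}({\rm tr}(B))\neq 0$ is precisely the hypothesis that makes this last equation solvable (just as it pins down $\lambda=0$ in the injectivity argument). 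The auxiliary claims --- strong convergence of $D$, uniform boundedness of the shifted Kraus sums, and the algebraic identity for $\Phi$ --- are routine consequences of Lemmas~\ref{lemma_krausopconv}, \ref{lemma_krausmap_def} and \ref{lemma_v_ell2}.
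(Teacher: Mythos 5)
Your proposal is correct and follows essentially the same route as the paper: injectivity via Proposition~\ref{prop_CPB_diff_impossible} plus the trace condition to kill the residual $i\lambda{\bf 1}$, and surjectivity by shifting the Kraus operators by $c_j={\rm tr}(B^*V_j)/{\rm tr}(B^*)$, using Lemma~\ref{lemma_v_ell2} for $\ell^2$-summability, absorbing the leftover scalar term $\|c\|_2^2(\cdot)$ into the $K$-part, and fixing ${\rm Im}({\rm tr}(B^*K))=0$ by a final purely imaginary shift. The only cosmetic difference is that you introduce the imaginary parameter $\beta$ already in $\tilde D$, whereas the paper performs that shift as a separate last step; the substance is identical.
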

\begin{proof}
First we prove surjectivity. Starting from 
any 
$L\in \mathsf L(\mathsf{CP}(\mathcal H))$
one finds $K_0\in\mathcal B(\mathcal H)$, $\Phi_0\in\mathsf{CP}(\mathcal H)$ such that $L=K_0(\cdot)+(\cdot)K_0^*+\Phi_0$.
Moreover, because $\Phi_0$ is completely positive and $\mathcal H$ is separable, as seen in Section~\ref{sec_prelim_2}
there exists $\{V_j\}_{j\in N}\subset\mathcal B(\mathcal H)$, $N\subseteq\mathbb N$ such that $\Phi_0=\sum_{j\in N}V_j(\cdot)V_j^*$
and $\sum_{j\in N}V_j^*V_j$ is uniformly bounded.
Then $v:=\{{\rm tr}(B^*V_j)\}_{j\in N}\in\ell^2( N,\mathbb C)$ (Lemma~\ref{lemma_v_ell2})
so $\sum_{j\in N}(\frac{{\rm tr}(B^*V_j)}{{\rm tr}(B^*)})^*V_j$ converges strongly:
for all $x\in\mathcal H\setminus\{0\}$
\begin{align*}
\sum_{j\in N}\big\|\tfrac{{\rm tr}(B^*V_j)}{{\rm tr}(B^*)}^*V_jx\big\|&=
|{\rm tr}(B^*)|^{-1}\sum_{j\in N}\big|{\rm tr}(B^*V_j)\big|\,\|V_jx\|\\
&\leq |{\rm tr}(B^*)|^{-1}\Big(\sum_{j\in N}\big|{\rm tr}(B^*V_j)\big|^2\Big)^{1/2}\Big(\sum_{j\in N}\big\|V_jx\big\|^2\Big)^{1/2}\\
&\leq|{\rm tr}(B^*)|^{-1}\|v\|_2\Big\| \sum_{j\in N}V_j^*V_j \Big\|_\infty^{1/2}\|x\| \,.
\end{align*}
Thus $\|\sum_{j\in N}(\frac{{\rm tr}(B^*V_j)}{{\rm tr}(B^*)})^*V_j\|_\infty\leq |{\rm tr}(B^*)|^{-1}\|v\|_2\| \sum_{j\in N}V_j^*V_j \|_\infty^{1/2}$ which together with \cite[Lemma~1.2.5]{Ringrose71} yields convergence of $\sum_{j\in N}\tfrac{{\rm tr}(B^*V_j)}{{\rm tr}(B^*)}^*V_jx$.
Altogether, when defining $\tilde V_j:=V_j -\frac{{\rm tr}(B^*V_j)}{{\rm tr}(B^*)}{\bf 1} \in\mathcal B(\mathcal H)$ for all $j\in N$
this shows that $\sum_{j\in N}\tilde V_j^*\tilde V_j$ is uniformly bounded:
for all $F\subseteq N$ finite
\begin{align*}
\Big\|\sum_{j\in F}&\tilde V_j^*\tilde V_j\Big\|_\infty=\Big\|\sum_{j\in F} \Big(V_j -\frac{{\rm tr}(B^*V_j)}{{\rm tr}(B^*)}{\bf 1}\Big)^*\Big(V_j -\frac{{\rm tr}(B^*V_j)}{{\rm tr}(B^*)}{\bf 1}\Big)\Big\|_\infty\\
&\leq \Big\| \sum_{j\in F} V_j^* V_j   \Big\|_\infty+2\Big\| \sum_{j\in F}\Big(\frac{{\rm tr}(B^*V_j)}{{\rm tr}(B^*)}\Big)^*V_j   \Big\|_\infty+ \frac{\sum_{j\in F}|{\rm tr}(B^*V_j)|^2}{|{\rm tr}(B)|^2}   \\
&\leq \Big\| \sum_{j\in N} V_j^* V_j   \Big\|_\infty+2|{\rm tr}(B^*)|^{-1}\|v\|_2\Big\| \sum_{j\in N} V_j^* V_j   \Big\|_\infty^{1/2}+ \frac{\sum_{j\in F}|{\rm tr}(B^*V_j)|^2}{|{\rm tr}(B)|^2}   
\end{align*}
This by Lemma~\ref{lemma_krausmap_def}~(ii) guarantees that 
$\Phi:=\sum_{j\in N}\tilde V_j(\cdot)\tilde V_j^*\in\mathsf{CP}(\mathcal H)$ is well defined.
Moreover, a straightforward computation shows $K_0(\cdot)+(\cdot)K_0^*+\Phi_0=\tilde K(\cdot)+(\cdot)\tilde K^*+\Phi$
when defining
$$
\tilde K:=K_0+\sum_{j\in N}\Big(\frac{{\rm tr}(B^*V_j)}{{\rm tr}(B^*)}\Big)^*V_j-\frac{\sum_{j\in N}|{\rm tr}(B^*V_j)|^2}{2|{\rm tr}(B)|^2}{\bf1}\,,
$$
cf.~also \cite[Eq.~(1.4)]{Davies80unique}.
By definition, ${\rm tr}(B^*\tilde V_j)=0$ for all $j$ so  Lemma~\ref{lemma_v_ell2} shows $\Phi\in\mathsf{CP}_B(\mathcal H)$.
All that is left is to ``shift'' $\tilde K$ such that ${\rm Im}({\rm tr}(B^* \tilde K))=0$.
Obviously, replacing $\tilde K$ by $\tilde K+i\lambda{\bf 1}$, $\lambda\in\mathbb R$ does not change $\tilde K(\cdot)+(\cdot)\tilde K^*$ and thus does not change $L$.
Moreover,
$
0={\rm Im}({\rm tr}(B^* (\tilde K+i\lambda{\bf 1})))={\rm Im}({\rm tr}(B^* \tilde K))+\lambda{\rm Re}({\rm tr}(B^*))
$
so setting $K:=\tilde K-i\frac{{\rm Im}({\rm tr}(B^* \tilde K))}{{\rm Re}({\rm tr}(B))}{\bf 1}\in\mathcal B(\mathcal H)$ yields $(K,\Phi)\in{\rm dom}(\Xi_B)$ such that $\Xi_B(K,\Phi)=
\tilde K(\cdot)+(\cdot)\tilde K^*+\Phi=K_0(\cdot)+(\cdot)K_0^*+\Phi_0=L$, as desired.

For injectivity, assume
$K_1(\cdot)+(\cdot)K_1^*+\Phi_1=K_2(\cdot)+(\cdot)K_2^*+\Phi_2$
for some $K_1,K_2\in\mathcal B(\mathcal H)$ , $\Phi_1,\Phi_2\in\mathsf{CP}_B(\mathcal H)$ such that ${\rm Im}({\rm tr}(B^* K_j))=0$, $j=1,2$; equivalently,
$
(K_2-K_1)(\cdot)+(\cdot)(K_2-K_1)^*=\Phi_1-\Phi_2\in \mathsf{CP}_B(\mathcal H)-\mathsf{CP}_B(\mathcal H)
$.
Thus Proposition~\ref{prop_CPB_diff_impossible} shows that $K_1=K_2+i\lambda{\bf1}$ for some $\lambda\in\mathbb R$.
This has two consequences: On the one hand, 
this imaginary difference between $K_1$ and $K_2$ cancels in the sense that
$K_1(\cdot)+(\cdot)K_1^*=K_2(\cdot)+(\cdot)K_2^*$
which in turn implies $\Phi_1=\Phi_2$. 
On the other hand, $K_1=K_2+i\lambda{\bf1}$ together with the trace condition on $K_1,K_2$ yields
\begin{align*}
0={\rm Im}({\rm tr}(B^* K_1))
={\rm Im}({\rm tr}(B^* K_2))+\lambda{\rm Re}({\rm tr}(B^*))=\lambda{\rm Re}({\rm tr}(B))\,.
\end{align*}
But ${\rm Re}({\rm tr}(B))\neq 0$ by assumption so $\lambda$ has to vanish, meaning $K_1=K_2$.
This concludes the proof.
\end{proof}
Because $\mathsf L(\mathsf{CPTP}(\mathcal H))\subseteq\mathsf L(\mathsf{CP}(\mathcal H))$, as a direct consequence of Theorem~\ref{thm_main} we obtain unique decompositions of generators of quantum-dynamical semigroups: this follows at once from the well-known identification $K=-\frac12\Phi^*({\bf1})-iH$.
\begin{corollary}\label{coro_unique_decomp_qds}
Let a separable complex Hilbert space $\mathcal H$
as well as $B\in\mathcal B^{1}(\mathcal H)$ 
with ${\rm Re}({\rm tr}(B))\neq 0$ be given.
Then
\begin{align*}
\hat\Xi_B:(H,\Phi)&\mapsto -i[H,\cdot]+\Phi-\Big\{\frac{\Phi^*({\bf1})}{2},\cdot\Big\}\in\mathsf L(\mathsf{CPTP}(\mathcal H))
\end{align*}
with domain $\{(H,\Phi)\in i\mathfrak u(\mathcal H)\times \mathsf{CP}_B(\mathcal H):\substack{{\rm Im}({\rm tr}(\Phi(B)))=2\,{\rm Re}({\rm tr}(B^* H))}\}
$
is bijective.
In particular, if $B\in\mathcal B^{1}(\mathcal H)$ is 
self-adjoint with ${\rm tr}(B)\neq 0$, then
\begin{align*}
\hat\Xi_B:\{H\in i\mathfrak u(\mathcal H):{\rm tr}(BH)=0\}\times \mathsf{CP}_B(\mathcal H)&\to \mathsf L(\mathsf{CPTP}(\mathcal H))\\
(H,\Phi)&\mapsto -i[H,\cdot]+\Phi-\Big\{\frac{\Phi^*({\bf1})}{2},\cdot\Big\}
\end{align*}
is bijective.
\end{corollary}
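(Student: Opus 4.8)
The plan is to derive Corollary~\ref{coro_unique_decomp_qds} from Theorem~\ref{thm_main} by identifying $\mathsf L(\mathsf{CPTP}(\mathcal H))$ with the ``trace-preserving slice'' of $\mathsf L(\mathsf{CP}(\mathcal H))$ and then re-parametrizing the corresponding operators $K$ by Hamiltonians. First I would record that for $L=K(\cdot)+(\cdot)K^*+\Phi$ and any $\rho\in\mathcal B^1(\mathcal H)$ one has ${\rm tr}(L(\rho))={\rm tr}\big((K+K^*+\Phi^*(\mathbf 1))\rho\big)$, using ${\rm tr}(\Phi(\rho))={\rm tr}(\Phi^*(\mathbf 1)\rho)$; hence, since $\mathcal B^1(\mathcal H)$ separates the points of $\mathcal B(\mathcal H)$ and since $e^{tL}$ is trace preserving for all $t\ge 0$ iff ${\rm tr}(L(\rho))=0$ for all $\rho$, one gets $L\in\mathsf L(\mathsf{CPTP}(\mathcal H))$ if and only if $L\in\mathsf L(\mathsf{CP}(\mathcal H))$ and $K+K^*+\Phi^*(\mathbf 1)=0$. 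On the affine set $\{K:K+K^*=-\Phi^*(\mathbf 1)\}$ there is a bijective re-parametrization $K=-iH-\tfrac12\Phi^*(\mathbf 1)$ with inverse $H:=\tfrac i2(K-K^*)$, which is self-adjoint precisely because $\Phi^*(\mathbf 1)$ is; plugging this into $K(\cdot)+(\cdot)K^*+\Phi$ and expanding the (anti-)commutator reproduces $-i[H,\cdot]+\Phi-\tfrac12\{\Phi^*(\mathbf 1),\cdot\}$, exactly as in the explicit description of the Lie wedges above. Therefore $\hat\Xi_B$ is $\Xi_B$ post-composed with this re-parametrization and restricted to the trace-preserving slice, so bijectivity of $\hat\Xi_B$ onto $\mathsf L(\mathsf{CPTP}(\mathcal H))$ will be immediate from Theorem~\ref{thm_main} once the domains are matched.

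Matching the domains is the only computation. Substituting $K=-iH-\tfrac12\Phi^*(\mathbf 1)$ gives ${\rm tr}(B^*K)=-i\,{\rm tr}(B^*H)-\tfrac12{\rm tr}(B^*\Phi^*(\mathbf 1))$. Now ${\rm tr}(B^*\Phi^*(\mathbf 1))={\rm tr}(\Phi(B^*))={\rm tr}(\Phi(B)^*)=\overline{{\rm tr}(\Phi(B))}$, where the first equality is the duality ${\rm tr}(A\Phi^*(C))={\rm tr}(\Phi(A)C)$ from Section~\ref{sec_prelim} (applicable since $B^*\in\mathcal B^1(\mathcal H)$ and $\mathbf 1\in\mathcal B(\mathcal H)$, so both $\Phi(B^*)$ and $B^*\Phi^*(\mathbf 1)$ are trace class) and the second uses that every completely positive map is Hermiticity-preserving, i.e.~$\Phi(X^*)=\Phi(X)^*$. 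Taking imaginary parts yields ${\rm Im}({\rm tr}(B^*K))=-{\rm Re}({\rm tr}(B^*H))+\tfrac12{\rm Im}({\rm tr}(\Phi(B)))$, so the condition ${\rm Im}({\rm tr}(B^*K))=0$ is equivalent to ${\rm Im}({\rm tr}(\Phi(B)))=2\,{\rm Re}({\rm tr}(B^*H))$; this establishes the general bijection. For the second part, $B=B^*$ forces ${\rm Re}({\rm tr}(B))={\rm tr}(B)\neq0$, so the general case applies, and $\Phi(B)=\Phi(B^*)=\Phi(B)^*$ together with $(B^*H)^*=HB$ show that ${\rm tr}(\Phi(B))\in\mathbb R$ and ${\rm tr}(B^*H)={\rm tr}(BH)\in\mathbb R$, whence the domain condition collapses to ${\rm tr}(BH)=0$, as stated.

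I do not anticipate a genuine obstacle: modulo Theorem~\ref{thm_main} everything here is bookkeeping. The two points to be slightly careful about are (i) that the $K\leftrightarrow H$ re-parametrization really is a bijection onto the set cut out by $K+K^*=-\Phi^*(\mathbf 1)$ -- which rests solely on $\Phi^*(\mathbf 1)=\Phi^*(\mathbf 1)^*$ -- and (ii) the trace identity ${\rm tr}(B^*\Phi^*(\mathbf 1))=\overline{{\rm tr}(\Phi(B))}$, for which one should note that all operators involved are trace class (because $B\in\mathcal B^1(\mathcal H)$ and $\Phi\in\mathcal B(\mathcal B^1(\mathcal H))$) so that the duality relation and the Hermiticity-preservation of $\Phi$ may be invoked legitimately.
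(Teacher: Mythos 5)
Your proposal is correct and follows essentially the same route as the paper, which derives the corollary from Theorem~\ref{thm_main} via the identification $K=-\tfrac12\Phi^*({\bf1})-iH$ together with the fact that trace preservation of the semigroup is the linear constraint ${\rm tr}(L(\rho))=0$ for all $\rho$. You merely spell out the bookkeeping (the bijective re-parametrization $K\leftrightarrow H$ and the translation of ${\rm Im}({\rm tr}(B^*K))=0$ into ${\rm Im}({\rm tr}(\Phi(B)))=2\,{\rm Re}({\rm tr}(B^*H))$) that the paper leaves implicit, and your computations are accurate.
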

\noindent From a physics perspective this corollary shows that any fixed reference state (more precisely: reference operator) gives rise to a unique splitting of GKSL-generators.

Finally a comment on how this result translates to the dual picture. There the object of interest is
the collection of all $\Phi:\mathcal B(\mathcal H)\to\mathcal B(\mathcal Z)$ which are completely positive, unital (i.e.~$\Phi({\bf1}_{\mathcal H})={\bf1}_{\mathcal Z}$), and ultraweakly continuous, denoted by $\mathsf{CPU}(\mathcal H)_\sigma$.
It is well known (and easy to verify) that $\Phi\in\mathsf{CPU}(\mathcal H)_\sigma$ if and only if its dual map $\Phi^*$ is CPTP.
Therefore $L\in \mathsf L(\mathsf{CPTP}(\mathcal H))$ is equivalent to $L^*\in\mathsf L(\mathsf{CPU}(\mathcal H)_\sigma)$
meaning the unique decomposition $(H,\Phi)$ of $L$ from Corollary~\ref{coro_unique_decomp_qds} readily translates into a unique decomposition $(-H,\Phi^*)$ of the dual generator $L^*$.
\section{Open Questions}\label{sec_open_q}

An obvious question 
is whether separability of the underlying Hilbert space is a necessary assumption in Theorem~\ref{thm_main}.
The only point where this assumption was needed was Proposition~\ref{prop_CPB_diff_impossible} for which injectivity of the weighted Choi formalism (which only holds in the separable case, cf.~Proposition~\ref{prop_choi_infdim}) was crucial.
This does of course \textit{not} mean that our main result is wrong for non-separable Hilbert spaces, just that if it is true, then one needs an entirely different proof strategy there.

Also one may wonder whether the reason the one-to-one relation between completely positive maps and positive semi-definite operators fails in infinite dimensions (Prop.~\ref{prop_choi_infdim})
is that the Choi formalism was modified in an ``unfavorable'' way.
Indeed,
the ``unweighted'' Choi map $\Phi\mapsto\sum_{j,k\in J}|g_j\rangle\langle g_k|\otimes\Phi(|g_j\rangle\langle g_k|)$ establishes a correspondence between maps $:\mathcal B^1(\mathcal H)\to\mathcal B^1(\mathcal Z)$ \textit{of finite rank} (i.e.~maps of the form $X\mapsto\sum_{j=1}^m{\rm tr}(A_jX)B_j$ with $m<\infty$) and the algebraic tensor product $\mathcal B(\mathcal H)\odot\mathcal B^1(\mathcal Z)$.
However, it is not clear what topology on the domain would allow for a completion of this correspondence to $\mathcal B(\mathcal B^1(\mathcal H),\mathcal B^1(\mathcal Z))$: the norm topology leads to a domain $\mathcal K(\mathcal B^1(\mathcal H),\mathcal B^1(\mathcal Z))$ which is too small, but something like the weak operator topology would likely be too weak and the domain would become too large.

Another question is concerned with the unbounded case, that is, the case of dynamical semigroups which are not norm- but only strongly continuous. While there are certain models where one can make sense of expressions like ${\rm tr}(B^*H)$ with $H$ unbounded (the relevant notion here are so-called ``Schwartz operators'', cf.~\cite{KMW16}{}), the bigger problem is that
in the strongly continuous case there is no standard form of the corresponding generator anymore, refer to \cite{OSID_Werner_17}{}. 

All questions posed above would, if solved, most likely require a vastly different set of tools as well as
deep new insights into the Choi formalism or into generators of completely positive semigroups which is why we pointed them out explicitly.

\section{Appendix: Auxiliary Lemmata}
The following result establishes how the strong operator topology interacts with the Schatten norms and is a generalization of \cite[Prop.~2.1]{Widom76} from sequences to uniformly bounded nets, although the proof stays basically the same:
\begin{lemma}\label{lemma_strong_op_schatten}
Let arbitrary complex Hilbert spaces $\mathcal H,\mathcal Z$ and $A\in\mathcal B^p(\mathcal H,\mathcal Z)$ for $p\in[1,\infty]$, as well as nets $(B_i)_{i\in I}\subset\mathcal B(\mathcal H)$, $(C_j)_{j\in J}\subset\mathcal B(\mathcal Z)$ be given. If there exist bounded operators $B,C\in\mathcal B(\mathcal H)$ such that $\|B_ix-Bx\|,\|C_iy-Cy\|\to 0$ for all $x,y\in\mathcal H$, and if there exists $\kappa>0$ such that $\|B_i\|_\infty\leq\kappa$, $\|C_j\|_\infty\leq\kappa$ for all $i\in I, j\in J$,
then the net $(B_iAC_j^*)_{i\in I,j\in J}\subset\mathcal B^p(\mathcal H,\mathcal Z)$ converges to $BAC^*$ in $p$-norm.
If $I=J$, then $(B_iAC_i^*)_{i\in I}\subset\mathcal B^p(\mathcal H,\mathcal Z)$ converges to $BAC^*$ in $p$-norm, as well.
In particular this holds for all sequences (i.e.~$I,J\subseteq\mathbb N$) which strongly converge to some bounded operator, even without the above uniform boundedness requirement.
\end{lemma}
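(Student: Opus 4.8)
The plan is to mimic Widom's argument almost verbatim: reduce to finite-rank $A$ by density, settle the finite-rank case by a direct telescoping estimate, and glue the two via an $\varepsilon/3$-estimate, the only new work compared to the sequential case being net bookkeeping. I would begin by recording a uniform bound on the sandwiching operation: by Lemma~\ref{lemma_schatten_comp}(iv) one has $\|B_iAC_j^*\|_p\le\|B_i\|_\infty\|A\|_p\|C_j\|_\infty\le\kappa^2\|A\|_p$ for all $i\in I$, $j\in J$; moreover $\|Bx\|=\lim_i\|B_ix\|\le\kappa\|x\|$ and likewise for $C$, so $\|B\|_\infty,\|C\|_\infty\le\kappa$ and hence $\|BAC^*\|_p\le\kappa^2\|A\|_p$. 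Thus $A\mapsto B_iAC_j^*-BAC^*$ is, uniformly in $(i,j)$, an operator on $\mathcal B^p(\mathcal H,\mathcal Z)$ of norm at most $2\kappa^2$.

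Second, the finite-rank case. If $A_0=\sum_{k=1}^m|y_k\rangle\langle x_k|$, then
\[
B_iA_0C_j^*-BA_0C^*=\sum_{k=1}^m\big(|B_iy_k-By_k\rangle\langle C_jx_k|+|By_k\rangle\langle C_jx_k-Cx_k|\big),
\]
so, using $\|\,|u\rangle\langle v|\,\|_p=\|u\|\|v\|$ and the uniform bounds from the first step,
\begin{align*}
\|B_iA_0C_j^*-BA_0C^*\|_p\le\sum_{k=1}^m\big(\kappa\|x_k\|\,\|B_iy_k-By_k\|+\|B\|_\infty\|y_k\|\,\|C_jx_k-Cx_k\|\big).
\end{align*}
The first group of terms depends only on $i$ and tends to $0$, the second depends only on $j$ and tends to $0$; hence $\|B_iA_0C_j^*-BA_0C^*\|_p\to0$ along the product net $I\times J$ (ordered componentwise).

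Third, the general case and the remaining claims. Given $\varepsilon>0$, choose a finite-rank $A_0$ with $\|A-A_0\|_p<\varepsilon/(3\kappa^2)$, which is possible since truncating a Schmidt decomposition of $A$ approximates it in $\|\cdot\|_p$ for every $p\in[1,\infty]$ (for $p=\infty$ this uses $s_j(A)\to0$, i.e.\ compactness of $A$). Then
\begin{align*}
\|B_iAC_j^*-BAC^*\|_p&\le\kappa^2\|A-A_0\|_p+\|B_iA_0C_j^*-BA_0C^*\|_p+\kappa^2\|A_0-A\|_p\\
&<\tfrac{2\varepsilon}{3}+\|B_iA_0C_j^*-BA_0C^*\|_p,
\end{align*}
and the last term is eventually $<\varepsilon/3$ by the second step, which proves $(B_iAC_j^*)_{(i,j)\in I\times J}\to BAC^*$ in $\|\cdot\|_p$. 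If $I=J$, then $(B_iAC_i^*)_{i\in I}$ converges to the same limit: given a neighbourhood of $BAC^*$ there is $(i_0,j_0)$ beyond which the product net stays inside, and picking $k_0\in I$ with $k_0\ge i_0$ and $k_0\ge j_0$ (possible as $I$ is directed) gives $(i,i)\ge(i_0,j_0)$ for all $i\ge k_0$. Finally, for sequences $B_n\to B$, $C_n\to C$ strongly the uniform-boundedness principle yields $\sup_n\|B_n\|_\infty<\infty$ and $\sup_n\|C_n\|_\infty<\infty$, so the boundedness hypothesis is automatic there.

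The only genuine subtlety --- and it is mild, which is why the proof ``stays basically the same'' as in the sequential case --- is that the finite-rank reduction must be made uniform in $i$ and $j$ \emph{simultaneously}, so that the $\varepsilon/3$-estimate delivers convergence along the product net rather than a pair of iterated limits; this is precisely what the uniform bound $\le 2\kappa^2$ from the first step secures. No step requires separability of $\mathcal H$ or $\mathcal Z$.
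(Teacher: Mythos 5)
Your proof is correct and follows essentially the same route as the paper: truncate the Schmidt decomposition of $A$, control the tail uniformly via the bound $\kappa^2\|\cdot\|_p$ on the sandwiching maps, handle the finite-rank part by the telescoping estimate using strong convergence on finitely many vectors, and obtain the sequence case from the uniform boundedness principle. The only difference is presentational (you phrase the truncation as a density/$\varepsilon/3$ argument rather than splitting $A=A_1+A_2$ inside one estimate), so no further comparison is needed.
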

\begin{proof}
Consider any Schmidt decomposition $\sum_{k=1}^\infty s_k(A)|e_k\rangle\langle f_k|$
and let $\varepsilon>0$.
W.l.o.g.~$\|B\|_\infty,\|C\|_\infty<\kappa$---else define $\tilde\kappa:=\max\{\kappa,\|B\|_\infty,\|C\|_\infty\}<\infty$. Now there exists $N\in\mathbb N$ such that
$$
\begin{cases}
\sum_{k=N+1}^\infty s_k(A)^p<\frac{\varepsilon^p}{(3\kappa^2)^p}&\text{ if }p\in[1,\infty)\\
s_{N+1}(A)<\frac{\varepsilon}{3\kappa^2}&\text{ if }p=\infty
\end{cases}\,.
$$
Then, using the directed set property, strong convergence of $(B_i)_{i\in I}$ implies the existence of $i_0\in I$ such that
$
\|B_ie_k-Be_k\|<\frac{\varepsilon}{6\kappa\sum_{k=1}^N s_k(A)}
$
for all $i\succeq i_0$ and all $k=1,\ldots,N$;
strong convergence of $(C_j)_{j\in J}$ on the set $\{f_1,\ldots,f_N\}$ yields a similar $j_0\in J$. Defining $A_1:=\sum_{k=1}^Ns_k(A)|e_k\rangle\langle f_k|$ and $A_2:=A-A_1$ we for all $(i,j)\succeq (i_0,j_0)$
compute
\begin{align*}
\|B_i&AC_j^*-BAC^*\|_p\\
&\leq\|B_iA_1C_j^*-B_iA_1C_j^*\|_p+\|B_iA_2C_j^*\|_p+\|BA_2C^*\|_p\\
&<\Big\|\sum_{k=1}^N s_k(A)|(B_i-B)e_k\rangle\langle C_jf_k|\Big\|_p+\Big\|\sum_{k=1}^N s_k(A)|Be_k\rangle\langle (C_j-C)f_k|\Big\|_p+\frac{2\varepsilon}{3}\\
&\leq\kappa  \sum_{k=1}^N s_k(A)\big( \|B_ie_k-Be_k\|+\|C_jf_k-Cf_k\| \big)+\frac{2\varepsilon}{3}<\varepsilon\,.
\end{align*}
The case $I=J$ (i.e.~$\|B_iAC_i^*-BAC^*\|_p\to 0$) is shown analogously. Now the additional statement 
follows at once from the
uniform boundedness principle.
\end{proof}
This has an immediate consequence for block approximations of Schatten class as well as for general bounded operators:
\begin{corollary}\label{coro_block_approx_nonsep}
Let arbitrary complex Hilbert spaces $\mathcal H,\mathcal Z$
and $A\in\mathcal L(\mathcal H,\mathcal Z)$ be given.
For any orthonormal bases $\{f_k\}_{k\in K}$, $\{g_j\}_{j\in J}$ of $\mathcal Z$, $\mathcal H$, respectively, as well as any finite subsets $J'\subseteq J$, $\mathcal K'\subseteq K$ define
$$
A_{J',K'}:=\sum_{j\in J'}\sum_{k\in K'}\langle  f_k,Ag_j\rangle |f_k\rangle\langle g_j|\in
\mathcal B(\mathcal H,\mathcal Z)\,.
$$
Then the following statements hold.
\begin{itemize}
\item[(i)] If $A\in\mathcal B^p(\mathcal H)$ for some $p\in[1,\infty]$,
then $(A_{J',K'})_{J'\subseteq J,\mathcal K'\subseteq K\text{ finite}}$ converges to $A$ in $p$-norm.
\item[(ii)] If $A$ is bounded, then $(A_{J',K'})_{J'\subseteq J,\mathcal K'\subseteq K\text{ finite}}$ converges to $A$ in the strong operator topology.
\end{itemize}
In particular, for all orthonormal bases $\{f_k\}_{k\in K}$, $\{g_j\}_{j\in J}$ of $\mathcal Z$, $\mathcal H$, respectively,
${\rm span}\{ |f_k\rangle\langle g_j|:j\in J,k\in K\}$ is dense in $(\mathcal B^p(\mathcal H,\mathcal Z),\|\cdot\|_p)$ for all $p\in[1,\infty]$.
\end{corollary}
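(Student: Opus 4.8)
The plan is to realize each block truncation $A_{J',K'}$ as a two-sided compression of $A$ by finite-rank orthogonal projections, and then to read off the claim from Lemma~\ref{lemma_strong_op_schatten}. For finite $J'\subseteq J$ and $K'\subseteq K$ put $Q_{J'}:=\sum_{j\in J'}|g_j\rangle\langle g_j|\in\mathcal B(\mathcal H)$ and $P_{K'}:=\sum_{k\in K'}|f_k\rangle\langle f_k|\in\mathcal B(\mathcal Z)$. These are self-adjoint projections, so $\|Q_{J'}\|_\infty,\|P_{K'}\|_\infty\leq 1$, and a direct computation gives $A_{J',K'}=P_{K'}AQ_{J'}$. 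The first step is then to observe that, since $\{g_j\}_{j\in J}$ and $\{f_k\}_{k\in K}$ are orthonormal bases, Parseval's identity yields $\|Q_{J'}x-x\|^2=\sum_{j\in J\setminus J'}|\langle g_j,x\rangle|^2\to 0$ and likewise $\|P_{K'}z-z\|^2\to 0$ along the nets of finite subsets ordered by inclusion; that is, $Q_{J'}\to{\bf1}_{\mathcal H}$ and $P_{K'}\to{\bf1}_{\mathcal Z}$ in the strong operator topology, with uniform bound $\kappa=1$.

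For (i), given $A\in\mathcal B^p(\mathcal H,\mathcal Z)$ I would apply Lemma~\ref{lemma_strong_op_schatten} to the two nets $(P_{K'})_{K'\subseteq K\text{ finite}}$ and $(Q_{J'})_{J'\subseteq J\text{ finite}}$ (note $Q_{J'}^*=Q_{J'}$), with limits ${\bf1}_{\mathcal Z}$ and ${\bf1}_{\mathcal H}$. Since ${\bf1}_{\mathcal Z}\,A\,{\bf1}_{\mathcal H}^*=A$, the lemma delivers exactly $\|A_{J',K'}-A\|_p=\|P_{K'}AQ_{J'}-A\|_p\to 0$ along the product net, which is the asserted $p$-norm convergence.

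For (ii), only boundedness of $A$ is available, so instead of the lemma I would use the elementary estimate $\|A_{J',K'}x-Ax\|\leq\|A\|_\infty\|Q_{J'}x-x\|+\|P_{K'}(Ax)-Ax\|$ for every $x\in\mathcal H$, whose right-hand side tends to $0$ by the strong convergences recorded above; this gives $A_{J',K'}\to A$ in the strong operator topology.

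Finally, the density statement is immediate from (i): each $A_{J',K'}$ lies in ${\rm span}\{|f_k\rangle\langle g_j|:j\in J,k\in K\}$ by construction, and every $A\in\mathcal B^p(\mathcal H,\mathcal Z)$ is the $\|\cdot\|_p$-limit of the net $(A_{J',K'})$, so this span is $\|\cdot\|_p$-dense for all $p\in[1,\infty]$. I do not expect a real obstacle here: the substantive analytic work is already contained in Lemma~\ref{lemma_strong_op_schatten}, and the only points needing care are the identification $A_{J',K'}=P_{K'}AQ_{J'}$ and the translation of the corollary's product-net indexing into the two-net formulation of that lemma.
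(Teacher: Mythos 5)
Your proposal is correct and follows essentially the same route as the paper: identifying $A_{J',K'}$ as the compression $\tilde\Pi_{K'}A\Pi_{J'}$ by finite-rank projections converging strongly to the identities with uniform bound $1$, invoking Lemma~\ref{lemma_strong_op_schatten} for the Schatten-norm statement, using the same elementary triangle-inequality estimate for strong convergence in (ii), and deducing density from (i).
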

\begin{proof}
(i): Given any finite subsets $J'\subseteq J$, $K'\subseteq K$, respectively, define the orthogonal projections $\Pi_{J'}:=\sum_{j\in J'}| g_j\rangle\langle  g_j|$, $\tilde\Pi_{K'}:=\sum_{k\in K'}| f_k\rangle\langle  f_k|$.
The corresponding nets $(\Pi_{J'})_{J'\subseteq J \text{ finite}}$, $(\tilde\Pi_{K'})_{K'\subseteq K\text{ finite}}$
are well known to
converge
to ${\bf1}_{\mathcal H}$, ${\bf1}_{\mathcal Z}$, respectively, in the strong operator topology.
With this, Lemma \ref{lemma_strong_op_schatten} shows $\|A_{J',K'}-A\|_p=\|\tilde\Pi_{K'}A\Pi_{J'}-A\|_p\to 0$ because $\|\Pi_{J'}\|_\infty=\|\tilde\Pi_{K'}\|_\infty=1$ for all $J'\subseteq J,K'\subseteq K$.
The additional statement also follows from this.
(ii): As before,
for all $x\in\mathcal H$
$
\|\tilde\Pi_{K'}A\Pi_{J'}x-Ax\|
\leq \|A\|_\infty\|(\Pi_{J'}-{\bf1}_{\mathcal H})x\|+\|(\tilde\Pi_{K'}-{\bf1}_{\mathcal Z})Ax\|\to 0\,.
$
\end{proof}

Finally, we prove a technical lemma about trace norm convergence of certain sums of operators.

\begin{lemma}\label{lemma_strongconv_kraus_app}
Let complex Hilbert spaces $\mathcal H,\mathcal Z$ and $\{V_j\}_{j\in J}\subset\mathcal B(\mathcal H,\mathcal Z)$ be given such that
$\{\sum_{j\in F}V_j^*V_j\}_{F\subseteq J\text{ finite}}$ is uniformly bounded.
Then $\{\sum_{j\in F}V_jAV_j^*\}_{F\subseteq J\text{ finite}}$ converges in trace norm for all $A\in\mathcal B^1(\mathcal H)$, $A\geq 0$.
Moreover, if $X\in\mathcal B(\mathcal H)$ denotes the limit of $\{\sum_{j\in F}V_j^*V_j\}_{F\subseteq J\text{ finite}}$, then
${\rm tr}(\sum_{j\in J}V_jAV_j^*)={\rm tr}(XA)$.
\end{lemma}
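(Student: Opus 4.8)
The plan is to treat the two claims in turn, exploiting throughout that for $A\geq 0$ the partial sums $N_F:=\sum_{j\in F}V_jAV_j^*$ (indexed by the finite subsets $F\subseteq J$, ordered by inclusion) form an \emph{increasing} net of positive semi-definite trace-class operators. Each $N_F$ lies in $\mathcal B^1(\mathcal Z)$ (Lemma~\ref{lemma_schatten_comp}~(iv)) and $F\subseteq F'$ gives $N_F\leq N_{F'}$, hence $\|N_{F'}-N_F\|_1={\rm tr}(N_{F'})-{\rm tr}(N_F)$ and ${\rm tr}(N_F)\leq{\rm tr}(N_{F'})$. Thus trace-norm convergence of $\{N_F\}_F$ will follow once one knows that the real, increasing net $\{{\rm tr}(N_F)\}_F$ is bounded above.

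For this bound I would use cyclicity of the trace to write ${\rm tr}(N_F)=\sum_{j\in F}{\rm tr}(AV_j^*V_j)={\rm tr}\big(A\sum_{j\in F}V_j^*V_j\big)$ and then, with $C:=\sqrt A\in\mathcal B^2(\mathcal H)$, rewrite this as ${\rm tr}\big(C(\sum_{j\in F}V_j^*V_j)C\big)$. From the operator inequality $C(\sum_{j\in F}V_j^*V_j)C\leq\|\sum_{j\in F}V_j^*V_j\|_\infty\,C^2$ and monotonicity of the trace on the positive trace-class cone one gets ${\rm tr}(N_F)\leq\|\sum_{j\in F}V_j^*V_j\|_\infty\,{\rm tr}(A)$, which is at most $C\cdot{\rm tr}(A)<\infty$ for the uniform bound $C$ of the hypothesis. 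Hence $\{{\rm tr}(N_F)\}_F$ converges in $\mathbb R$, so it is Cauchy; the identity $\|N_{F'}-N_F\|_1={\rm tr}(N_{F'})-{\rm tr}(N_F)$ for nested $F\subseteq F'$, combined with a short $F\cup F'$ argument, upgrades this to Cauchyness of $\{N_F\}_F$ in $\|\cdot\|_1$ for arbitrary $F,F'$, and completeness of the Banach space $\mathcal B^1(\mathcal Z)$ yields a trace-norm limit $Y=:\sum_{j\in J}V_jAV_j^*$.

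For the trace identity I would combine continuity of the trace with respect to $\|\cdot\|_1$ with the computation above: ${\rm tr}(Y)=\lim_F{\rm tr}(N_F)=\lim_F{\rm tr}\big((\sum_{j\in F}V_j^*V_j)A\big)$. By Lemma~\ref{lemma_krausopconv} the net $\{\sum_{j\in F}V_j^*V_j\}_F$ converges to its supremum $X$ in the ultraweak topology, which by definition means precisely that ${\rm tr}\big((\sum_{j\in F}V_j^*V_j)A\big)\to{\rm tr}(XA)$ for every $A\in\mathcal B^1(\mathcal H)$. Therefore ${\rm tr}(Y)={\rm tr}(XA)$, as claimed.

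I do not expect a genuine obstacle here. The one point that requires a little care is the passage from convergence of the numerical net $\{{\rm tr}(N_F)\}_F$ to trace-norm convergence of $\{N_F\}_F$: this rests on the positivity of the increments (so that $\|\cdot\|_1$ reduces to the trace), the $F\cup F'$ trick to handle non-nested indices, and completeness of $\mathcal B^1(\mathcal Z)$. Everything else is routine use of cyclicity of the trace, an operator-monotonicity estimate, and the ultraweak convergence already recorded in Lemma~\ref{lemma_krausopconv}.
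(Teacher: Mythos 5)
Your proposal is correct, and it reaches the conclusion by a genuinely different mechanism than the paper for the main convergence step. You exploit positivity structurally: since $A\geq 0$, the partial sums $N_F=\sum_{j\in F}V_jAV_j^*$ form an increasing net of positive trace-class operators, so $\|N_{F'}-N_F\|_1={\rm tr}(N_{F'})-{\rm tr}(N_F)$ for nested indices, and the whole problem collapses to the bounded increasing scalar net ${\rm tr}(N_F)={\rm tr}\big(\sqrt A\,(\sum_{j\in F}V_j^*V_j)\sqrt A\big)\leq C\,{\rm tr}(A)$; the $F\cup F'$ trick plus completeness of $\mathcal B^1(\mathcal Z)$ then gives trace-norm convergence. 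The paper instead verifies the summability (Cauchy) criterion of Ringrose directly: it splits the Schmidt decomposition of $A$ into a finite ``head'' (controlled by weak convergence of $\sum_{j\in F}\|V_jg_k\|^2$ for the finitely many relevant eigenvectors) and a small-trace ``tail'' (controlled by the uniform bound on $\sum_{j\in F}V_j^*V_j$), an explicit $\varepsilon/2$-estimate with eigenbasis bookkeeping. Your route is shorter and only needs Lemma~\ref{lemma_krausopconv} at the very end (for the ultraweak convergence used in the trace identity, which you prove exactly as the paper does); the paper's route is a more hands-on estimate that does not pass through the monotone-net reduction. One cosmetic point: you use $C$ both for $\sqrt A$ and for the uniform bound of the hypothesis; rename one of them when writing this up.
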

\begin{proof}
Given $A\in\mathcal B^1(\mathcal H)$ positive semi-definite (so $A=\sum_{k\in N}s_k(A)|g_k\rangle\langle g_k|$ for some orthonormal system $\{g_k\}_{k\in N}$ in $\mathcal H$)
our goal is to show that for all $\varepsilon>0$ there exists $F_\varepsilon\subseteq J$ finite such that $\|\sum_{j\in F}V_jAV_j^*\|_1<\varepsilon$ for all $F\subseteq J\setminus F_\varepsilon$ finite (this is sufficient due to Lemma~1.2.2 in \cite{Ringrose71}{}).
By Lemma~\ref{lemma_krausopconv} $\{\sum_{j\in F}V_j^*V_j\}_{F\subseteq J\text{ finite}}$ converges strongly and ultraweakly---so in particular weakly---to some $X\in\mathcal B(\mathcal H)$.
W.l.o.g.~$X\neq 0$ (else $V_j=0$ for all $j\in J$) as well as $A\neq 0$.

Now let any $\varepsilon>0$ be given. Because $(s_k(A))_{k\in N}\in\ell^1(N,\mathbb C)$,
there exists $N_\varepsilon\subseteq N$ such that $ N\setminus N_\varepsilon$ is finite and that $\sum_{k\in N\setminus N_\varepsilon}s_k(A)>0$, as well as
$
\sum_{k\in N_\varepsilon}s_k(A)<\frac{\varepsilon}{2\|X\|_\infty}
$.
Moreover, because $\{\sum_{j\in F}V_j^*V_j\}_{F\subseteq J\text{ finite}}$ converges weakly,
$\{\langle g_k,\sum_{j\in F}V_j^*V_jg_k\rangle\}_{F\subseteq J\text{ finite}}=\{\sum_{j\in F}\|V_jg_k\|^2\}_{F\subseteq J\text{ finite}}$ is summable for all $k\in N\setminus N_\varepsilon$.
This way one iteratively finds $F_\varepsilon\subseteq J$ finite such that $\sum_{j\in F}\|V_jg_k\|^2<\frac{\varepsilon}{2\sum_{k\in N\setminus N_\varepsilon}s_k(A)}$ for all $F\subseteq J\setminus F_\varepsilon$ finite and all $k\in N\setminus N_\varepsilon$.
This is all we need in order to verify the above Cauchy criterion: for all $F\subseteq J\setminus F_\varepsilon$ finite, using the triangle inequality we
can upper bound $\|\sum_{j\in F}V_jAV_j^*\|_1$ via
\begin{align*}
\sum_{j\in F}&\sum_{k\in N\setminus N_\varepsilon}s_k(A)\big\|V_j|g_k\rangle\langle g_k|V_j^*\big\|_1+ \sum_{j\in F}\sum_{k\in N_\varepsilon}s_k(A)\big\|V_j|g_k\rangle\langle g_k|V_j^*\big\|_1\\
&\leq \sum_{k\in N\setminus N_\varepsilon}s_k(A)\Big(\sum_{j\in F}\|V_jg_k\|^2\Big)+\sum_{k\in  N_\varepsilon}s_k(A)
\Big\langle g_k,\sum_{j\in F}V_j^*V_j  g_k\Big\rangle
\\
&<\frac{\varepsilon}{2}+\sum_{k\in  N_\varepsilon}s_k(A)\Big\|\sum_{j\in F}V_j^*V_j \Big\|_\infty\leq \frac{\varepsilon}{2}+\sum_{k\in  N_\varepsilon}s_k(A)\|X\|_\infty<\frac{\varepsilon}{2}+\frac{\varepsilon}{2}=\varepsilon\,.
\end{align*}
The final claim follows from the trace-norm convergence we just showed, together with ultraweak convergence of $\{\sum_{j\in F}V_j^*V_j\}_{F\subseteq J\text{ finite}}$:
\begin{align*}
{\rm tr}\Big(\sum_{j\in J}V_jAV_j^*\Big)=\lim_F{\rm tr}\Big(\sum_{j\in F}V_jAV_j^*\Big)=\lim_F{\rm tr}\Big(\sum_{j\in F}V_j^*V_jA\Big)={\rm tr}(XA)
\end{align*}
\end{proof}

\section*{Acknowledgment}
I appreciate fruitful discussions with Federico Girotti at the ``Workshop on dissipative evolutions on infinite dimensional and many-body quantum systems'' 2023 in Nottingham.
There it was pointed out to me that injectivity of $B$---which was an assumption in Prop.~\ref{prop_CPB_diff_impossible} ff.~in an earlier draft---is not necessary for the main results to hold.
Moreover, I am grateful to the anonymous referee for their constructive comments, as well as to Sumeet Khatri for pointing me to some references on the infinite-dimensional Choi formalism in quantum optics.
This work has been supported by the Einstein Foundation (Einstein Research Unit on Quantum Devices) and the MATH+ Cluster of Excellence.

%
%
%
%
%
%
%

  \bibliographystyle{ws-idaqp} 
  \bibliography{../../../../../../../control21vJan20.bib}
\end{document}